\documentclass[12pt,a4paper,reqno]{amsart}

\usepackage[colorlinks, linkcolor=red,anchorcolor=blue,citecolor=green]{hyperref}
\usepackage{graphics,epic}
\usepackage{amsmath,amssymb, amsthm}
\usepackage[all,2cell]{xy}
\usepackage{times}
\usepackage{mathrsfs}

\newtheorem{theorem}{Theorem}[section]
\newtheorem*{theorem*}{Theorem}

\newtheorem{lemma}[theorem]{Lemma}
\newtheorem{proposition}[theorem]{Proposition}
\newtheorem{corollary}[theorem]{Corollary}

\newtheorem*{conjecture*}{Conjecture}

\theoremstyle{definition}

\newtheorem{example}[theorem]{Example}
\newtheorem{remark}[theorem]{Remark}

\newcommand{\ie}{{\em i.e.}\ }

\newcommand{\opname}[1]{\operatorname{\mathsf{#1}}}

\renewcommand{\mod}{\opname{mod}\nolimits}

\newcommand{\add}{\opname{add}\nolimits}

\newcommand{\der}{\cd}

\newcommand{\rank}{\opname{rank}\nolimits}

\newcommand{\ind}{\opname{ind}}

\newcommand{\Sub}{\opname{Sub}}
\newcommand{\Fac}{\opname{Fac}}

\newcommand{\pd}{\opname{pd}}
\newcommand{\id}{\opname{id}}
\newcommand{\gld}{\opname{gl.dim}}
\newcommand{\Du}{\opname{D}\nolimits}
\newcommand{\thick}{\opname{thick}}

\newcommand{\im}{\opname{im}\nolimits}

\renewcommand{\P}{\mathbb{P}}

\newcommand{\Hom}{\opname{Hom}}

\newcommand{\Ext}{\opname{Ext}}

\newcommand{\End}{\opname{End}}

\newcommand{\cd}{{\mathcal D}}

\newcommand{\n}{\mathfrak{N}}

\newcommand{\xra}{\xrightarrow}

\newcommand{\bsm}{\begin{smallmatrix}}
\newcommand{\esm}{\end{smallmatrix}}

\newcommand{\bbsm}{\left[\begin{smallmatrix}}
\newcommand{\besm}{\end{smallmatrix}\right]}

\newcommand{\bbm}{\begin{matrix}}
\newcommand{\ebm}{\end{matrix}}

\begin{document}

\title[On endomorphism algebras of silting complexes]{On endomorphism algebras of silting complexes over hereditary abelian categories}\thanks{This work was supported by the National Natural Science Foundation of China (Grant No. 12571040)}

\author[Dai]{Wei Dai}
\address{Wei Dai\\ Department of Mathematics\\SiChuan University\\610064 Chengdu\\P.R.China}
\email{daiweimath@gmail.com}

\author[Fu]{Changjian Fu}
\address{Changjian Fu\\Department of Mathematics\\SiChuan University\\610064 Chengdu\\P.R.China}
\email{changjianfu@scu.edu.cn}

\author[Peng]{Liangang Peng}
\address{Liangang Peng\\Department of Mathematics\\SiChuan University\\610064 Chengdu\\P.R.China}
\email{penglg@scu.edu.cn}

\subjclass[2020]{16G10, 16E10, 18E30}
\keywords{silting reduction, $\tau$-tilting reduction, quasi-silted algebra, idempontent quotient}

\begin{abstract}
Let $\mathcal{E}$ be the class of finite-dimensional algebras isomorphic to endomorphism algebras of silting complexes over hereditary abelian categories. It is proved that the class $\mathcal{E}$ is closed under taking  idempotent subalgebras, idempotent quotients and $\tau$-tilting reduction. We also show that the proper class consisting of quasi-silted algebras is also closed under these operations. In addition, several classical classes of algebras---including laura, glued, and weakly shod algebras---are  proved to be closed under idempotent quotients, thereby generalizing a result originally established for specific idempotents.
\end{abstract}

\maketitle


\section{Introduction}

For a given algebra $A$, the study of its idempotent subalgebra $eAe$ and the idempotent quotient $A/AeA$ associated with an idempotent $e \in A$ is a central theme in representation theory (see, for instance, \cite{CPS88, AC04, X, Xu, CK25}). A primary motivation for focusing on these two constructions originates from the theory of recollements of derived categories. It is well known that under suitable conditions, the bounded derived category $\mathcal{D}^b(A)$ can be glued from $\mathcal{D}^b(eAe)$ and $\mathcal{D}^b(A/AeA)$ via a recollement \cite{CPS88}. This framework provides a powerful mechanism for homological reduction, allowing one to decompose the homological properties of $A$---such as the finiteness of its global dimension---into those of $eAe$ and $A/AeA$ \cite{AKLY}.

From this perspective, establishing the closure properties of a specific class of algebras under these operations is of fundamental importance. It transforms the class into a self-contained system where global conjectures and structural problems can be tackled through recursive arguments. For example, the fact that the class of $2$-Calabi-Yau tilted algebras arising from hereditary categories is closed under idempotent quotients was a key step in proving the connectedness of cluster-tilting graphs for hereditary categories \cite{FG21}. Similarly, the proof of the connectedness of $\tau$-tilting graphs for gentle algebras  relies on the fact that the class of gentle algebras is closed under $\tau$-tilting reduction \cite{FGLZ}.

In the representation theory of finite-dimensional algebras, the study of endomorphism algebras of distinguished objects---such as tilting modules, support $\tau$-tilting modules, and (2-term) silting complexes---lies at the heart of the field. These constructions give rise to several fundamental classes of algebras, including tilted, quasi-tilted, silted, and quasi-silted algebras. A landmark result by Happel, Reiten, and Smal{\o} \cite{HRS} established that quasi-tilted algebras are precisely the endomorphism algebras of tilting objects in hereditary abelian categories. This categorical characterization was further extended by Buan and Zhou \cite{BZ16a, BZ16b}, who proved that shod algebras can be realized as endomorphism algebras of 2-term silting complexes over hereditary abelian categories. As a further generalization, endomorphism algebras of silting complexes over finite-dimensional hereditary algebras were investigated in \cite{ALLT} from the viewpoint of categorical structure.

The purpose of this work is to investigate the class $\mathcal{E}$ of finite-dimensional algebras realized as endomorphism algebras of basic silting complexes over hereditary abelian categories, specifically focusing on their behavior under the operations of taking idempotent subalgebras and idempotent quotients. Employing the machinery of silting reduction in triangulated categories, we develop an approach to analyze the stability of this class of algebras. Our main results, which are interconnected and established via this method, verify the following closure properties.

\begin{theorem}[Theorem \ref{p: idempotent-factor-closed}, Theorem \ref{p: idempotent-subalg-closed}] \label{t: introduction_1}
Let $A\in \mathcal{E}$ and $e\in A$ be an idempotent. Then
\begin{itemize}
    \item[(1)] $A/AeA\in \mathcal{E}$. Moreover, if $A$ is quasi-silted, then so is $A/AeA$.
    \item[(2)] $eAe\in \mathcal{E}$. Moreover, if $A$ is quasi-silted (resp. quasi-tilted), then so is $eAe$.
\end{itemize}
\end{theorem}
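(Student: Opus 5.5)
Write $A=\End_{\cd}(T)$, where $\cd=\cd^b(\mathcal H)$ for a hereditary abelian category $\mathcal H$ and $T=\bigoplus_{i=1}^n T_i$ is a basic silting object of $\cd$. An idempotent $e\in A$ corresponds, up to conjugation, to a direct summand $U=\bigoplus_{i\in I}T_i$ of $T$, and we write $T=U\oplus V$. Then $eAe\cong\End_{\cd}(U)$ and $A/AeA\cong\End_{\cd}(V)/[U](V,V)$, the quotient of $\End(V)$ by the maps factoring through $\add U$. The plan is to realize both algebras again as endomorphism algebras of silting objects in derived categories of hereditary abelian categories. For $A/AeA$ we use silting reduction in the sense of Iyama--Yang; for $eAe$ we use a localization, or equivalently a perpendicular-category argument.

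For part (1), consider the Verdier quotient $\mathcal U=\cd/\thick(U)$. By Iyama--Yang silting reduction, the image of $V$ in $\mathcal U$ is silting, and $\End_{\mathcal U}(V)\cong\End_{\cd}(V)/[U](V,V)$. The latter holds because $U$ is presilting and $\Hom(U,V[>0])=0=\Hom(V,U[>0])$, which controls the morphism spaces in the quotient, so this ring is exactly $A/AeA$. It remains to identify $\mathcal U$ with $\cd^b(\mathcal H')$ for some hereditary abelian $\mathcal H'$. We decompose $U=\bigoplus_j U_j[n_j]$ with $U_j\in\mathcal H$. Since $U$ is presilting and $\mathcal H$ is hereditary, the summands in each shift-degree are exceptional-type objects forming a rigid configuration. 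The thick subcategory $\thick(U)$ is then generated by $\add\bigoplus_j U_j\subset\mathcal H$, so $\cd/\thick(U)\simeq\cd^b(\mathcal H)/\thick(\mathcal X)$ with $\mathcal X=\add\bigoplus_j U_j$. Because $\thick(U)$ is admissible, being generated by a presilting and hence functorially finite object, the quotient is equivalent to the perpendicular category $\thick(U)^{\perp}$. This perpendicular category is computed degreewise in $\mathcal H$ as $\mathcal X^{\perp_{0,1}}$, which is again hereditary abelian by Geigle--Lenzing type results. Hence $\mathcal U\simeq\cd^b(\mathcal X^{\perp_{0,1}})$, and therefore $A/AeA\in\mathcal E$.

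The main obstacle in (1) is this identification. Its delicate point is that $\Hom(U_j,U_k[1])$ may be nonzero when the $n_j$ differ, so the objects $U_j$ need not form a partial tilting object in $\mathcal H$. We would first reduce to the case that $\bigoplus_j U_j$ is a direct sum of exceptional objects with no self-extensions in each degree, and then build $\mathcal X^{\perp}$ inductively, one indecomposable summand at a time, using iterated reduction. This works because silting reduction is transitive: $(\cd/\thick U_1)/\thick U_2\simeq\cd/\thick(U_1\oplus U_2)$. Hence it suffices to treat a single indecomposable $U$, that is, a single exceptional object of $\mathcal H$, where the perpendicular category is classical. For the quasi-silted refinement, we recall that quasi-silted algebras are those arising from $2$-term silting objects, in the sense that $T\in\mathcal H\ast\mathcal H[1]$ up to shift. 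The image of such a $V$ in $\cd^b(\mathcal H')$ stays $2$-term with respect to the induced heart, because the perpendicular equivalence is compatible with the standard t-structures.

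For part (2), $eAe=\End(U)$ with $U$ presilting. We complete $U$ in the smaller category $\thick(U)$, where $U$ is already silting, so $eAe=\End_{\thick(U)}(U)$. It suffices to show $\thick(U)\simeq\cd^b(\mathcal H'')$ with $\mathcal H''$ hereditary abelian. By the single-object reduction above, $\thick(U)={}^{\perp}(\thick(U)^{\perp})$, which is the left perpendicular of a perpendicular category, and the left perpendicular ${}^{\perp_{0,1}}\mathcal X'$ in $\mathcal H$ is again hereditary abelian. Alternatively, $\thick(U)$ is generated by the exceptional sequence $\{U_j\}$ and equals $\cd^b(\mathcal X'')$ for the exact abelian extension closure $\mathcal X''$ of the $U_j$ in $\mathcal H$, which is hereditary. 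The $2$-term property is inherited directly, since it concerns only the summands, which handles the quasi-silted case. In the quasi-tilted case, $T$ is tilting in $\mathcal H$, so $U\in\mathcal H$ satisfies $\Ext^1(U,U)=0$. Then $U$ is a tilting object of the hereditary abelian category $\mathcal X''$, and by Happel--Reiten--Smal\o\ the algebra $eAe$ is quasi-tilted.
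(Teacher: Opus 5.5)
\textbf{Part (1): the gap is in the quasi-silted refinement.} Your main argument for (1) is the paper's route: silting reduction at $\thick U$, then $\cd/\thick U\simeq(\thick U)^\perp\simeq\cd^b(U^{\perp_{\mathbb Z}})$. Two points there are inessential. The induction over indecomposable summands is unnecessary, because $E^{\perp_{[0,1]}}$ is hereditary abelian for an arbitrary object $E$; Lemma~\ref{l: perp-cat-is-hereditary} uses no rigidity. And ``presilting, hence functorially finite'' is not automatic; it needs an argument like Lemma~\ref{l:functorially-finite}.

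\textbf{What $2$-term means.} In the paper, $2$-term means (S1): $\Hom(T,M[i])=0$ for all $M\in\mathcal H$ and $i\neq 0,1$. This forces the $\mathcal H[1]$-part of $T$ to be projective in $\mathcal H$. It is strictly stronger than your condition $T\in\mathcal H\ast\mathcal H[1]$.

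\textbf{Why your justification fails.} ``Compatibility with the standard t-structures'' proves nothing here. The object of $(\thick U)^\perp$ representing $V$ is not $V$ itself. It is the cone $S_V$ in a triangle $S\to V\to S_V\to S[1]$, where $S\to V$ is a right $\thick U$-approximation, and $S$ typically involves positive shifts of $U$.

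\textbf{A counterexample for your notion.} Over the path algebra of $1\to2\to3$, take $T=P(1)\oplus S(2)\oplus I(2)[1]$. It is silting and lies in $\mathcal H\ast\mathcal H[1]$. Reduce at $U=P(1)$:
\begin{itemize}
\item $S(2)$ is fixed, since it already lies in $(\thick P(1))^\perp$;
\item $I(2)[1]$ is sent to $S(3)[2]$, the cone of $P(1)[1]\to I(2)[1]$.
\end{itemize}
So the reduced silting object $S(2)\oplus S(3)[2]$ of $\cd^b(P(1)^{\perp_{[0,1]}})$ is not $2$-term up to any shift.

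\textbf{The missing step.} Take $M\in U^{\perp_{\mathbb Z}}$ and apply $\Hom(-,M[i])$ to the triangle $S\to V\to S_V\to S[1]$. Since $\Hom(S,M[\mathbb Z])=0$, this gives $\Hom(S_V,M[i])\cong\Hom(V,M[i])$. The right-hand side vanishes for $i\neq 0,1$ exactly by (S1) for $T$.

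\textbf{Part (2): correct, by a different route from the paper.} The paper first proves Theorem~\ref{t: perp-completion}, using the mutation argument of Proposition~\ref{p: ind-perp-completion}. It shows that $N$ has a presilting complement $D$ with $N\in(\thick D)^\perp$. The torsion pair $(\thick D,\thick N)$ then gives $\thick N=(\thick D)^\perp\simeq\cd^b(D^{\perp_{\mathbb Z}})$, and Buan--Zhou is quoted for the tilting case.

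Your second version needs none of this. Set $\mathcal H'':=\thick U\cap\mathcal H$; this is the same as the wide closure of the $U_j$ and as your double perpendicular.
\begin{itemize}
\item It is closed under extensions, kernels and cokernels, since the cone of $f\colon X\to Y$ in $\mathcal H$ is $\ker f[1]\oplus\operatorname{coker}f$.
\item It is hereditary, because the proof of Lemma~\ref{l: perp-cat-is-hereditary} works verbatim for any such subcategory.
\item The degreewise argument of Proposition~\ref{p:identification-subcategory} then gives $\cd^b(\mathcal H'')\simeq\thick U$.
\end{itemize}
The exceptional-sequence property of the $U_j$ is never used. The tilting case is immediate: if $X\in\mathcal H''$ satisfies $\Hom(U,X)=0=\Ext^1(U,X)$, then $X\in\thick U\cap(\thick U)^\perp=0$.

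\textbf{Comparison.} Your route is shorter. It avoids the completion and mutation machinery, and in the extension-closure version even functorial finiteness. The paper's route gives more structure: $\thick N$ is the right perpendicular category of a single presilting complement $D$ of $N$.
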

We remark that the final assertion of (2) was previously established in \cite{AC04}. However, our proof is entirely different from the one provided there.

Note that the idempotent quotient can be viewed as a special case of the $\tau$-tilting reduction introduced by Jasso \cite{J}. It is therefore natural to ask whether the class $\mathcal{E}$ is closed under the more general operation of $\tau$-tilting reduction.

\begin{theorem} [Theorem \ref{t: quasisilted is tau-tilting reduction closed}] \label{t: introduction_2}
Let $A\in \mathcal{E}$ and  $Z$ be a $\tau$-rigid $A$-module.
\begin{itemize}
    \item[(1)] The $\tau$-tilting reduction of $A$ with respect to $Z$ belongs to $\mathcal{E}$.
    \item[(2)] If $A$ is quasi-silted, then its $\tau$-tilting reduction with respect to $Z$ is also quasi-silted.
\end{itemize}
\end{theorem}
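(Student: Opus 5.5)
Write $A=\End_{\cd}(T)$ with $\cd=\cd^b(\mathcal H)$, where $\mathcal H$ is a hereditary abelian category and $T$ is a basic silting object of $\cd$. The plan is to realize Jasso's $\tau$-tilting reduction by silting reduction. Given a $\tau$-rigid module $Z$, we take its projective presentation and obtain a presilting object of $K^b(\mathrm{proj}\,A)$ concentrated in degrees $-1,0$. We then lift it to a presilting object $U\in\cd$ lying in the ``2-term'' range $T*T[1]$ relative to $T$. Concretely, $U$ is the cone of a map $T_1\to T_0$ in $\add T$ whose image under $\Hom(T,-)$ is the minimal projective presentation of $Z$. The identification of $\cd$ with $K^b(\mathrm{proj}\,A)$ is not needed here. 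We only need that 2-term presilting objects with respect to $T$ correspond to $\tau$-rigid pairs, as in the Adachi--Iyama--Reiten correspondence, applied inside the subcategory $T*T[1]$. This correspondence holds in any Hom-finite Krull--Schmidt triangulated category with silting object $T$.

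Next we apply Bongartz completion: there is a silting object $T_U=U\oplus V$ with $T_U\in T*T[1]$. We form the Verdier quotient $\cd/\thick U$. By Iyama--Yang silting reduction, the image of $V$ is silting in $\cd/\thick U$. By Jasso's theorem, in its silting-theoretic form, the $\tau$-tilting reduction of $A$ at $Z$ is $\End_{A}(T_Z)/[Z]$, where $T_Z$ is the Bongartz completion of $Z$. This identifies with $\End_{\cd/\thick U}(V)$. Equivalently, it is $\End_{\cd}(V)$ modulo the maps factoring through $\add U$; the hom-spaces in the Verdier quotient reduce to this because $U\oplus V$ is silting.

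The key step is to show that $\cd/\thick U\simeq \cd^b(\mathcal H')$ for some hereditary abelian category $\mathcal H'$. Here I would use that $\thick U$ is generated by an exceptional-type presilting object in $\cd^b(\mathcal H)$. Since $\mathcal H$ is hereditary, each indecomposable summand of $U$ is a shift of an object of $\mathcal H$. So up to shifts of summands, $\thick U=\thick U'$ for a rigid object $U'\in\mathcal H$, that is, one with $\Ext^1(U',U')=0$. The perpendicular category $U'^{\perp}=\{X\in\mathcal H: \Hom(U',X)=0=\Ext^1(U',X)\}$ is hereditary abelian and exact-embedded, by Geigle--Lenzing / Happel's perpendicular calculus. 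Moreover $\cd/\thick U\simeq \cd^b(U'^{\perp})$, because $\thick U'$ is admissible: it is generated by a partial silting object in a Hom-finite category. Passing from $\cd/\thick U$ to the perpendicular category ${}^{\perp}$ or $U^{\perp}$ inside $\cd$ is the standard equivalence $\cd/\thick U\simeq U^{\perp_{\cd}}$, and $U^{\perp_\cd}=\cd^b(U'^{\perp})$. I expect this to be the main obstacle. It requires a finiteness condition on $\mathcal H$ (Hom-finiteness, and that rigid objects have finitely many summands), and for non-basic or infinite cases one must check that $U'^{\perp}$ is still Hom-finite with enough structure for the definition of $\mathcal E$. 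This gives (1). The algebra obtained is basic because we may take $V$ basic.

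For (2), recall that quasi-silted corresponds to $T$ being 2-term with respect to some heart, i.e. $T\in\mathcal H*\mathcal H[1]$ for a hereditary heart $\mathcal H$. By the rotation argument we can keep $U,V\in\mathcal H*\mathcal H[1]$; Bongartz completion stays in $T*T[1]\subseteq \mathcal H*\mathcal H[1]$. We then check that the image of $V$ in $U^{\perp_\cd}$ lies in $\mathcal H'*\mathcal H'[1]$, where $\mathcal H'=U'^{\perp}$. To see this, we project $V$ via the right adjoint of the inclusion $U^{\perp_\cd}\hookrightarrow\cd$, using the approximation triangle $U_V\to V\to V'\to$ with $U_V\in\thick U$. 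We then compute the cohomology of $V'$ with respect to $\mathcal H$ and verify that it is concentrated in degrees $-1,0$. The heredity of $\mathcal H$ and the fact that $U\in\mathcal H*\mathcal H[1]$ confine the cone to these degrees; this degree bookkeeping is where care is needed. Hence the reduction is quasi-silted.
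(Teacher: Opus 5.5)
Your part (1) follows the paper's route. You lift $Z$ to a presilting $U\in\add T*\add T[1]$ via the Iyama--J{\o}rgensen--Yang bijection, apply Jasso's theorem to land in $\cd/\thick U$, and identify that quotient with $\cd^b$ of the perpendicular category. Your claim that the unshifted sum $U'$ of the indecomposable summands of $U$ is rigid is false in general: summands $X\in\mathcal H$ and $Y[1]$ of a presilting object can have $\Ext^1(X,Y)=\Hom(X,Y[1])\neq 0$. Nothing depends on it, though. $\thick U=\thick U'$ regardless, $E^{\perp_{[0,1]}}$ is hereditary abelian for every $E\in\mathcal H$, and Hom- and Ext-finiteness pass automatically to this full exact subcategory.

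\textbf{The gap is in part (2), and it starts with the definition.} Here quasi-silted means $T$ satisfies (S1): $\Hom(T,M[i])=0$ for all $M\in\mathcal H$ and $i\neq 0,1$. Test this against $M=H^j(T)$ with $i=-j$, and then against arbitrary $M$ with $i=2$. This shows (S1) means $T\cong X\oplus P[1]$ with $X\in\mathcal H$ and $P$ \emph{projective} in $\mathcal H$, which is strictly stronger than $T\in\mathcal H*\mathcal H[1]$.
\begin{itemize}
\item Even a successful degree count for $V'$ would therefore not make $V'$ 2-term over $\mathcal H'$ in the required sense.
\item Your inclusion $T*T[1]\subseteq\mathcal H*\mathcal H[1]$ is also false, since $T[1]$ contains $P[2]$.
\item The degree bookkeeping you defer actually fails under your hypothesis. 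Let $\mathcal H=\mod kA_2$ with simple projective $S_a$, projective-injective $P_b$ and simple injective $S_b$. Then $T=P_b\oplus S_b[1]$ is silting and lies in $\mathcal H*\mathcal H[1]$, although it violates (S1) because $\Ext^1(S_b,S_a)\neq 0$. For $Z=\Hom(T,P_b)$ you get $U=P_b$ and $V=S_b[1]$. The approximation triangle $P_b[1]\to S_b[1]\to S_a[2]\to P_b[2]$ then gives $V'=S_a[2]$, which is concentrated in degree $-2$.
\end{itemize}

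\textbf{The missing idea is that no cohomology computation is needed.} You verify (S1) for the reduced object directly against $\mathcal H'=U^{\perp_{\mathbb Z}}$. Take $M\in\mathcal H'$ and $i\neq 0,1$, and apply $\Hom(-,M[i])$ to the triangle $S\to T\to S_T\to S[1]$, where $S\to T$ is a minimal right $\thick U$-approximation. Then $\Hom(S,M[i-1])=0$ because $M\in U^{\perp_{\mathbb Z}}$, and $\Hom(T,M[i])=0$ by (S1). Hence $\Hom(S_T,M[i])=0$. Since $S_T$ is silting in $(\thick U)^\perp$ by silting reduction, it is a 2-term silting complex over $\mathcal H'$. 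This is exactly the paper's argument. The paper works with $T$ rather than $V$ because $T_U\cong T$ in $\cd/\thick U$. If you want to keep $V$, first deduce $\Hom(V,M[i])=0$ for $M\in\mathcal H'$ from the Bongartz triangle $T\to X_U\to U''\to T[1]$ with $U''\in\add U$. Then run the same argument on $U_V\to V\to V'\to U_V[1]$.
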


The proof of Theorem \ref{t: introduction_1}, which establishes the closure property under idempotent quotients, motivates a further exploration into the classical classes of algebras studied by Assem and Coelho \cite{AC04}. Specifically, we examine whether these well-known classes---namely laura, glued, weakly shod, and shod algebras---are also closed under quotients by arbitrary idempotents. By employing an independent analysis with techniques distinct from silting reduction, we provide an affirmative answer to this question.

\begin{theorem} [Theorem \ref{t: quotient-closed}] \label{t: introduction_3}
Let $A$ be a finite-dimensional $k$-algebra and $e\in A$ be an idempotent.
\begin{enumerate}
\item If $A$ is a laura algebra, then so is $A / AeA$.
\item If $A$ is a right (or left) glued algebra, then so is $A / AeA$.
\item If $A$ is a weakly shod algebra, then so is $A / AeA$.
\item If $A$ is a shod algebra, then so is $A / AeA$.
\end{enumerate}
\end{theorem}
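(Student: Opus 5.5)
We plan to work directly with the module-theoretic definitions of Assem and Coelho and of Skowroński, writing $B=A/AeA$. We identify $\mod B$ with the full subcategory of $\mod A$ of modules $M$ with $Me=0$. This subcategory is closed under submodules, quotients and finite direct sums. The indecomposable $B$-modules are exactly the indecomposable $A$-modules annihilated by $e$.

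The first step is to compare paths and homological invariants in $B$ and $A$. A path in $\ind B$ is a path in $\ind A$, since $\Hom_B(X,Y)=\Hom_A(X,Y)$ for $X,Y\in\mod B$. For the homological invariants we use two facts:
\begin{itemize}
\item[(i)] $\pd_B X\le 1$ whenever $\pd_A X\le 1$ and $Xe=0$;
\item[(ii)] dually, $\id_B X\le 1$ whenever $\id_A X\le 1$ and $Xe=0$.
\end{itemize}
To prove (i) we use the Auslander–Reiten formula: $\pd_B X\le1$ if and only if $\Hom_B(\Du B,\tau_B X)=0$. We then compare $\tau_B X$ with $\tau_A X$. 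Since $\tau_B X$ is a submodule of $\tau_A X$, namely the largest submodule annihilated by $e$, a nonzero map $\Du B\to\tau_B X$ yields a nonzero map $\Du B\to\tau_A X$. Because $\Du B$ is a submodule of $\Du A$ and $\tau_A X$ is injective-free, one cannot simply extend this map. Instead we argue with the characterization $\pd X\le1 \iff \Hom(\Du A,\tau X)=0$, replacing $\Du A$ by the $A$-modules $I_B$ and using that every indecomposable injective $B$-module embeds in an injective $A$-module. This part needs care, and an alternative route is worth keeping in mind. The subcategories $\mathcal L_A$ and $\mathcal R_A$ can be described via paths to projectives and from injectives. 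The condition $\pd_B X\le 1$ can then be replaced by the statement that no predecessor of $X$ in $\ind B$ is a ``bad'' module. This route requires showing that every $B$-module of $\pd_B\ge2$ is a successor in $\ind A$ of some $A$-module of $\pd_A\ge 2$. We will establish this through a minimal projective resolution over $B$ and a comparison with the one over $A$, using that the kernel of $P_A\to P_B$ lies in $\mathcal{T}=\Fac(Ae)$-like pieces. We regard this comparison as the main technical obstacle.

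With these comparisons in hand, we obtain inclusions of the form $\mathcal L_B\supseteq \mathcal L_A\cap\ind B$ and $\mathcal R_B\supseteq\mathcal R_A\cap \ind B$. The four statements then follow case by case:
\begin{enumerate}
\item For laura algebras, all but finitely many indecomposables of $A$ lie in $\mathcal L_A\cup\mathcal R_A$. Intersecting with $\ind B$ shows that the same holds for $B$.
\item For right glued algebras, $\mathcal R_A$ is cofinite. Hence $\mathcal R_B\supseteq\mathcal R_A\cap\ind B$ is cofinite in $\ind B$, and the left glued case is dual.
\item For weakly shod algebras, we use the characterization via bounded lengths of paths from injectives to projectives. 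A $B$-injective need not be $A$-injective, so we relate each $B$-injective to an $A$-injective via a path in $\ind A$. This path runs through the injective envelope, that is, the monomorphism $I_B\hookrightarrow I_A$. Dually, each $B$-projective is related to an $A$-projective via $P_A\twoheadrightarrow P_B$. A path in $\ind B$ from an injective to a projective thus extends to a path in $\ind A$ of length at most two more, so the uniform bound persists.
\item For shod algebras, every indecomposable of $B$ lies in $\mathcal L_A\cup\mathcal R_A$ and hence in $\mathcal L_B\cup\mathcal R_B$. Combined with finite global dimension, this gives that $B$ is shod.
\end{enumerate}
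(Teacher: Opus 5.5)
Your overall architecture agrees with the paper's. Paths in $\ind B$ are paths in $\ind A$, and the comparisons (i) and (ii) give $\mathcal{L}_A\cap\ind B\subseteq\mathcal{L}_B$ and $\mathcal{R}_A\cap\ind B\subseteq\mathcal{R}_B$, from which (1) and (2) follow formally. There are, however, two genuine gaps.

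\emph{The key comparison (i) is not proved.} The whole argument rests on it, and you leave it open. The Auslander--Reiten formula route stalls exactly where you say: a nonzero map $\Du B\to\tau_B X$ need not extend to $\Du A$. The fallback is only gestured at; note also that for right modules the relevant torsion class is $\Fac(eA)$, not $\Fac(Ae)$.

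The missing step is a short pull-back argument, and this is what the paper does.
\begin{enumerate}
\item Take an exact sequence $0\to L\to P/PeA\to X\to 0$ with $P/PeA\to X$ a projective cover in $\mod B$ and $P$ a projective $A$-module. Every projective $B$-module has this form.
\item Pulling back along $P\twoheadrightarrow P/PeA$ gives exact sequences $0\to K\to P\to X\to 0$ and $0\to PeA\to K\to L\to 0$ in $\mod A$.
\item Since $\pd_A X\le 1$, the module $K$ is projective.
\item From $PeA\subseteq K\subseteq P$ and $PeA=(PeA)eA$ you get $KeA=PeA$.
\item Hence $L\cong K/KeA\cong K\otimes_A B$ is a projective $B$-module, so $\pd_B X\le 1$.
\end{enumerate}
Fact (ii) then follows by applying $\Du$, using $(A/AeA)^{op}\cong A^{op}/A^{op}eA^{op}$.

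\emph{The argument for (3) fails.} The maps $I_B\hookrightarrow I_A$ and $P_A\twoheadrightarrow P_B$ point the wrong way. To turn a path in $\ind B$ from $I_B$ to $P_B$ into a path from an $A$-injective to an $A$-projective, you need nonzero maps $I'\to I_B$ and $P_B\to P'$. In general there are none.

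For example, let $A$ be the path algebra of $1\leftarrow 2$ and $e=e_2$, so $B=k$. The injective $B$-module $S(1)$ is not $A$-injective. It receives no nonzero map from any indecomposable injective $A$-module, since $I(1)\cong P(2)$ and $\Hom_A(P(2),S(1))=S(1)e_2=0$, while $I(2)=S(2)$. So no path in $\ind A$ from an injective ends at $S(1)$.

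The fix is to use the Assem--Coelho characterization: $A$ is weakly shod if and only if paths from modules outside $\mathcal{L}_A$ to modules outside $\mathcal{R}_A$ have bounded length. By (i) and (ii), $\ind B\setminus\mathcal{L}_B\subseteq\ind A\setminus\mathcal{L}_A$ and $\ind B\setminus\mathcal{R}_B\subseteq\ind A\setminus\mathcal{R}_A$. Since paths in $\ind B$ are paths in $\ind A$, the bound for $A$ also works for $B$.

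Two minor points. In (4) you need neither $\ind A=\mathcal{L}_A\cup\mathcal{R}_A$ (a theorem you would have to cite) nor finite global dimension: simply apply (i) or (ii) to each indecomposable $B$-module. Also, in the paper's convention ``right glued'' means $\mathcal{L}_A$ is cofinite; you swapped this, which is harmless since you treat both sides.
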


This result significantly generalizes the work of Zito \cite{Z}, who proved a similar statement for specific idempotents.

The paper is organized as follows. We provide the necessary preliminaries and the main method of silting reduction in Section \ref{s:preliminary}. In Section \ref{s:closure properties}, we present the proofs of Theorems \ref{t: introduction_1} and \ref{t: introduction_2}. Section \ref{s: Quotient-closed property by idempotents} is devoted to the proof of Theorem \ref{t: introduction_3}. Finally, Section \ref{s:examples} provides several examples to illustrate the boundaries of our results.

\subsection*{Notation}
Throughout this paper, let $k$ be a field. By a hereditary abelian category, we mean a $\Hom$-finite and $\Ext$-finite $k$-linear hereditary abelian category.
\begin{itemize}
\item Let $\mathcal{X}$ be a subcategory of a category $\mathcal{C}$. We denote by $\mathcal{X}^\perp$ the full subcategory of $\mathcal{C}$ consisting of objects $X \in \mathcal{C}$ such that $\Hom_\mathcal{C}(\mathcal{X}, X) = 0$. The subcategory $^\perp\mathcal{X}$ is defined dually.

\item We denote by $[1]$ the suspension functor of a triangulated category $\mathcal{T}$. For subcategories $\mathcal{X}$ and $\mathcal{Y}$ of $\mathcal{T}$, let $\mathcal{X} * \mathcal{Y}$ be the full subcategory consisting of objects $E \in \mathcal{T}$ such that there is a triangle $X \to E \to Y \to X[1]$ with $X \in \mathcal{X}$ and $Y \in \mathcal{Y}$. Given an object $X\in\mathcal{T}$, we denote by $\thick X$ the smallest thick subcategory of $\mathcal{T}$ containing $X$.
\item In a Krull-Schmidt category, for an object $X$, denote by $|X|$ the number of pairwise non-isomorphic indecomposable direct summands of $X$, and by $\add X$ the full subcategory consisting of finite direct sums of direct summands of $X$.
\item For an algebra $A$, we denote by $\mod A$ the category of finitely generated right $A$-modules, and by $\ind A$ the set of isomorphism classes of indecomposable right $A$-modules. For an element $x \in A$, let $\langle x \rangle = AxA$ be the two-sided ideal generated by $x$.
\item For a module $M \in \mod A$, we denote by $\Fac M$ the full subcategory of factor modules of direct sum of finite copies of $M$. Dually, denote by $\Sub M$ the full subcategory of the submodules of direct sum of finite copies of $M$.

\end{itemize}

\section{Preliminaries}  \label{s:preliminary}

\subsection{Perpendicular subcategories in triangulated categories} 
Let $\mathcal{T}$ be a triangulated category and  $\mathcal{X}$  a subcategory of $\mathcal{T}$. A morphism $f: X \to Y$ is called a {\em right $\mathcal{X}$-approximation} of $Y \in \mathcal{T}$ if $X \in \mathcal{X}$ and the induced map $\Hom_\mathcal{T}(\mathcal{X}, f): \Hom_\mathcal{T}(\mathcal{X}, X) \to \Hom_\mathcal{T}(\mathcal{X}, Y)$ is surjective. The subcategory $\mathcal{X}$ is {\em contravariantly finite} in $\mathcal{T}$ if every object in $\mathcal{T}$ admits a right $\mathcal{X}$-approximation. Dually, one defines {\em left $\mathcal{X}$-approximation} and {\em covariantly finite subcategories}. The subcategory $\mathcal{X}$ is called {\em functorially finite} if it is both contravariantly finite and covariantly finite. 

\begin{lemma}\textnormal{\cite[Proposition 5.33]{IO}}\label{l:X*Y-functorially-finite}
Let  $\mathcal{X}$ and $\mathcal{Y}$ be full subcategories of $\mathcal{T}$. 
\begin{itemize}
    \item[(1)] If $\mathcal{X}$ and $\mathcal{Y}$ are contravariantly finite in $\mathcal{T}$, then so is $\mathcal{X} \ast \mathcal{Y}$.
    \item[(2)] If $\mathcal{X}$ and $\mathcal{Y}$ are covariantly finite in $\mathcal{T}$, then so is $\mathcal{X} * \mathcal{Y}$.
\end{itemize}
\end{lemma}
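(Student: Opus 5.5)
We prove (1); part (2) is the dual argument, obtained by reversing all arrows, or equivalently by passing to the opposite triangulated category. Let $E\in\mathcal{X}*\mathcal{Y}$ be arbitrary, so we need a right $(\mathcal{X}*\mathcal{Y})$-approximation of an arbitrary object $T\in\mathcal{T}$. The idea is to approximate in two steps: first by $\mathcal{Y}$, then by $\mathcal{X}$ applied to a cocone. The octahedral axiom glues the two steps into a single object of $\mathcal{X}*\mathcal{Y}$.

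\emph{Construction.} First take a right $\mathcal{Y}$-approximation $g\colon Y_T\to T$. Complete $g$ to a triangle $C\xrightarrow{c} Y_T\xrightarrow{g} T\to C[1]$. Next take a right $\mathcal{X}$-approximation $x\colon X_C\to C[1]$, where we use $C[1]$ rather than $C$. Complete the composite morphism $h\colon T\to C[1]$ and the map $x$ to a homotopy pullback: let $E$ be defined by a triangle
\[E\xrightarrow{f} T\xrightarrow{\pi x'}\ \ldots\]
Concretely, $E$ sits in the triangle $X_C[-1]\to E\to T\xrightarrow{} \operatorname{cone}$. Put more cleanly, $E$ is the homotopy pullback of $T\to C[1]\leftarrow X_C$, and it fits into a triangle $E\to T\oplus X_C\to C[1]\to E[1]$. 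The octahedral axiom, applied to the composite $X_C\to C[1]$ together with the triangle on $g$, then yields a triangle
\[X_C\to E'\to Y_T\to X_C[1],\]
possibly after reindexing shifts. This shows that the resulting object lies in $\mathcal{X}*\mathcal{Y}$ and comes with a map $f\colon E'\to T$.

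\emph{A simpler alternative, which I would write up.} Let $\phi\colon E\to T$ with $E$ given by a triangle $X\to E\to Y\to X[1]$. The composite $X\to E\to T$ factors through $T$, and $Y$ maps to $T$ only indirectly, so a direct construction is awkward. Instead I would proceed as follows. Take the right $\mathcal{Y}$-approximation $g\colon Y_T\to T$ with cocone $C\xrightarrow{c} Y_T$. Choose a right $\mathcal{X}$-approximation $x\colon X_C\to C$. Form the triangle $X_C\xrightarrow{cx} Y_T\to E'\to X_C[1]$. This does not obviously lie in $\mathcal{X}*\mathcal{Y}$, so this route needs care.

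I therefore return to the pullback route and verify the lifting property. Given $\phi\colon E\to T$ with $E\in\mathcal{X}*\mathcal{Y}$, presented by $X\xrightarrow{a} E\xrightarrow{b} Y$, the verification proceeds in three steps.

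\textbf{Step 1.} The composite $\phi\circ a\colon X\to T$ must be lifted through the approximation. This is where the $\mathcal{X}$-approximation is used.

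\textbf{Step 2.} The induced map on $Y$ is lifted through $g$, using that $g$ is a $\mathcal{Y}$-approximation.

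\textbf{Step 3.} The difference between the two lifts is corrected by factoring through $C$, which reduces to an $\mathcal{X}$-approximation problem. This reduction uses the long exact $\Hom$-sequences of the triangles together with the morphism axiom TR3 to produce compatible maps of triangles.

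The main obstacle is arranging the shifts so that the constructed object genuinely lies in $\mathcal{X}*\mathcal{Y}$, rather than in $\mathcal{Y}*\mathcal{X}$ or in a shifted version, while keeping the factorization property. I would first try the octahedron on $Y_T\to T\to C[1]$ combined with the approximation $X_C\to C[1]$. If that fails, I would simply cite \cite[Proposition 5.33]{IO}, where this is done in detail, since the lemma is quoted from there.
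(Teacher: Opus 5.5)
The construction you commit to produces an object of the wrong subcategory. You take $Y_T\xrightarrow{g}T\xrightarrow{h}C[1]\to Y_T[1]$ and an approximation $x\colon X_C\to C[1]$, then form the homotopy pullback $E$ of $T\xrightarrow{h}C[1]\xleftarrow{x}X_C$. The cocone of the induced map $E\to X_C$ is isomorphic to the cocone of $h$, which is $Y_T$. So the octahedral axiom gives a triangle $Y_T\to E\to X_C\to Y_T[1]$, i.e.\ $E\in\mathcal{Y}*\mathcal{X}$. No reindexing of shifts turns this into a triangle $X_C\to E\to Y_T\to X_C[1]$, and in general $\mathcal{Y}*\mathcal{X}\neq\mathcal{X}*\mathcal{Y}$.

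Your Steps 1--3 also do not match the construction. Step 1 lifts $\phi a\colon X\to T$ through an $\mathcal{X}$-approximation of $T$, but your construction only approximates $T$ by $\mathcal{Y}$. Step 2 lifts an ``induced map on $Y$'' into $T$, and no such map exists: $Y$ is the cone of $a$, not a subobject of $E$. So the factorization property is never actually verified. Falling back on \cite[Proposition 5.33]{IO} is legitimate, since that is all the paper itself does, but then your text is not a proof.

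The repair is to approximate in the opposite order, which is what your Step 1 implicitly wants.
\begin{enumerate}
\item Take a right $\mathcal{X}$-approximation $x\colon X_T\to T$ with triangle $X_T\xrightarrow{x}T\xrightarrow{p}C\to X_T[1]$, and a right $\mathcal{Y}$-approximation $y\colon Y_C\to C$.
\item Complete $(p,\,-y)\colon T\oplus Y_C\to C$ to a triangle $E'\to T\oplus Y_C\to C\to E'[1]$. Write $\psi\colon E'\to T$ and $\eta\colon E'\to Y_C$ for the components of the first map.
\item The cocone of $\eta$ is the cocone of $p$. This gives $X_T\to E'\xrightarrow{\eta}Y_C\to X_T[1]$, so $E'\in\mathcal{X}*\mathcal{Y}$.
\item Given $\phi\colon E\to T$ with $X\xrightarrow{a}E\xrightarrow{b}Y\to X[1]$, write $\phi a=x\alpha$. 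Then $p\phi a=0$, so $p\phi=\beta b$ for some $\beta\colon Y\to C$, and $\beta=y\gamma$ for some $\gamma\colon Y\to Y_C$.
\item Hence $(\phi,\gamma b)\colon E\to T\oplus Y_C$ composes to zero with $(p,-y)$. It therefore factors through $E'$, giving $\phi=\psi\theta$ for some $\theta\colon E\to E'$.
\end{enumerate}
No ``correction of a difference of lifts'' is needed; the weak-kernel property of the pullback triangle does the work. Equivalently, your original construction is a right $(\mathcal{Y}*\mathcal{X})$-approximation, and since the hypotheses are symmetric in $\mathcal{X}$ and $\mathcal{Y}$, relabelling would also fix it. Part (2) then follows by applying (1) in $\mathcal{T}^{op}$. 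There $\mathcal{X}*\mathcal{Y}$ becomes $\mathcal{Y}*\mathcal{X}$, which is harmless because (1) holds for every pair.
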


Recall that a pair of subcategories $(\mathcal{X}, \mathcal{Y})$ of $\mathcal{T}$ is a {\em torsion pair} of $\mathcal{T}$, if  $\Hom_{\mathcal{T}}(\mathcal{X}, \mathcal{Y})=0$ and $\mathcal{X}*\mathcal{Y}=\mathcal{T}$. The following useful result is known as Wakamatsu's Lemma (see \cite[Lemma 2.22]{AiharaIyama}).
\begin{lemma}\label{l:wakamatsu-lemma}
Let $\mathcal{M}$ be a subcategory of $\mathcal{T}$ such that $\mathcal{M}*\mathcal{M}\subseteq \mathcal{M}$.
\begin{itemize}
\item[(1)]  If $\mathcal{M}$ is contravariantly finite, then $(\mathcal{M}, \mathcal{M}^\perp)$ is a torsion pair of $\mathcal{T}$;
\item[(2)] If $\mathcal{M}$ is covariantly finite, then $(\!^\perp\mathcal{M}, \mathcal{M})$ is a torsion pair of $\mathcal{T}$.
\end{itemize}
\end{lemma}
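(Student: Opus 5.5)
We prove (1); part (2) is the dual argument, obtained by replacing right approximations with left approximations and reversing every triangle. Three things must be checked: $\Hom_{\mathcal{T}}(\mathcal{M},\mathcal{M}^\perp)=0$, which holds by definition; $\mathcal{M}^\perp$ is closed under isomorphisms, which is automatic; and $\mathcal{T}=\mathcal{M}*\mathcal{M}^\perp$, which is the substance of the proof. So fix $T\in\mathcal{T}$. We must find a triangle $M\to T\to Y\to M[1]$ with $M\in\mathcal{M}$ and $Y\in\mathcal{M}^\perp$.

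The natural first attempt is to take a right $\mathcal{M}$-approximation $f:M\to T$ and complete it to a triangle $M\xra{f}T\xra{g}Y\xra{h}M[1]$. For $M'\in\mathcal{M}$, apply $\Hom(M',-)$ to get the exact sequence
\[\Hom(M',M)\to\Hom(M',T)\to\Hom(M',Y)\to\Hom(M',M[1]).\]
The first map is surjective, so any $\phi:M'\to Y$ satisfies $g\phi'=\phi$ for no nonzero part coming from $T$; more precisely, $\phi$ injects into $\Hom(M',M[1])$. This need not vanish, so an arbitrary approximation is not enough. Instead we use a \emph{minimal}-type argument, which is the standard Wakamatsu trick.

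We argue via the extension-closed hypothesis. Take $\phi:M'\to Y$ with $M'\in\mathcal{M}$. Form the homotopy pullback (octahedral axiom) of $g$ along $\phi$. This gives a triangle $M\to E\to M'\xra{h\phi}M[1]$ together with a map $E\to T$ compatible with $f$. Since $\mathcal{M}*\mathcal{M}\subseteq\mathcal{M}$, we have $E\in\mathcal{M}$, so $E\to T$ factors through $f$. Chasing the octahedron, we want to conclude that $h\phi=0$, and hence that $\phi$ factors through $g$; the surjectivity above then forces $\phi=0$.

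The real obstacle is that the factorization of $E\to T$ through $f$ only shows that $h\phi$ is killed after composing with some endomorphism. In the module-category Wakamatsu lemma this gap is closed by minimality of the approximation. In a triangulated category minimal approximations need not exist, so we follow Aihara--Iyama. The plan is to assume $\mathcal{T}$ is Krull--Schmidt (as in all applications in this paper), choose $f$ right minimal, and use the resulting endomorphism $\alpha$ of $M$ with $f\alpha=f$. We show $\alpha$ is an automorphism and deduce $h\phi=0$. If Krull--Schmidt is not available, we simply cite \cite[Lemma 2.22]{AiharaIyama}, whose hypotheses match those of the statement.
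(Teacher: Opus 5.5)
Your sketch is the standard Wakamatsu argument, which is all the paper's ``proof'' amounts to: the paper gives no argument and simply cites \cite[Lemma 2.22]{AiharaIyama}, as your fallback does. Once written out, the mechanics of your sketch are sound. The morphism of triangles gives $c\colon E\to T$ with $ca=f$, where $a\colon M\to E$ is the first map of $M\xrightarrow{a}E\to M'\xrightarrow{h\phi}M[1]$. Writing $c=fd$ with $d\colon E\to M$ gives $f(da)=f$, so $\alpha=da$ is an automorphism by right minimality. Then $\alpha[1]\circ h\phi=d[1]\circ a[1]\circ h\phi=0$ forces $h\phi=0$, whence $\phi=g\phi'=gf\phi''=0$.

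The step that needs an extra hypothesis is ``choose $f$ right minimal''. In a Krull--Schmidt category you obtain a right minimal map by splitting $M=M_1\oplus M_2$ with $f|_{M_2}=0$ and $f|_{M_1}$ right minimal. But $f|_{M_1}$ is a right $\mathcal{M}$-approximation only if $M_1\in\mathcal{M}$, i.e.\ only if $\mathcal{M}$ is closed under direct summands, and the statement does not say this.

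This is not a technicality. In $\der^b(\mod k)$, let $\mathcal{M}$ consist of the objects isomorphic to $(k\oplus k[-1])^n$, $n\ge 0$. A cohomology computation gives $\mathcal{M}*\mathcal{M}\subseteq\mathcal{M}$. Also $\mathcal{M}$ is contravariantly finite: a map from $(k\oplus k[-1])^n$ whose components run through a basis of $\Hom(k\oplus k[-1],T)\cong H^0(T)\oplus H^1(T)$ is an approximation. Yet $k\notin\mathcal{M}*\mathcal{M}^\perp$. Indeed, in a triangle $M\to k\to Y\to M[1]$ with $H^0(Y)=0$, the long exact sequence gives $H^1(M)=0$, so $M=0$ and $Y\cong k$, a contradiction. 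Here the right minimal version of the approximation $(1,0)\colon k\oplus k[-1]\to k$ is $1_k$, and $k\notin\mathcal{M}$, which is exactly where your argument breaks.

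So you must assume that $\mathcal{M}$ is closed under direct summands. This is standard, and it holds where the paper applies the lemma, namely for $\mathcal{M}=\thick D$. With that hypothesis, and Krull--Schmidt to produce minimal approximations, your proof is complete.
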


Let $\mathcal{S}$ be a thick subcategory  of $\mathcal{T}$, denote by $\mathcal{T}/\mathcal{S}$ the Verdier quotient of $\mathcal{T}$ with respect to $\mathcal{S}$. Denote by $\mathbb{L}: \mathcal{T}\to \mathcal{T}/\mathcal{S}$ the localization functor. The following  is also well-known, see \cite[Lemma 2.2]{DF} for instance.

\begin{lemma}\label{l:perp-quotient}
Let $\mathcal{S}$ be a thick subcategory of $\mathcal{T}$.
\begin{enumerate}
\item If $\mathcal{S}$ is contravariantly finite, then the restriction $\mathcal{S}^\perp\to \mathcal{T}/\mathcal{S}$ of $\mathbb{L}$ is a triangle equivalence.
\item If  $\mathcal{S}$ is covariantly finite, then the restriction $^\perp\mathcal{S}\to \mathcal{T}/\mathcal{S}$ of $\mathbb{L}$ is a triangle equivalence.
\end{enumerate}
\end{lemma}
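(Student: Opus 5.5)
We prove (1); part (2) is the formal dual, obtained by passing to the opposite category $\mathcal{T}^{op}$. There $\mathcal{S}$ is contravariantly finite exactly when it is covariantly finite in $\mathcal{T}$, and $\mathcal{S}^\perp$ computed in $\mathcal{T}^{op}$ is $^\perp\mathcal{S}$ computed in $\mathcal{T}$. So assume $\mathcal{S}$ is thick and contravariantly finite.

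\emph{Step 1: a torsion pair.} Since $\mathcal{S}$ is thick, it is closed under extensions, so $\mathcal{S}*\mathcal{S}\subseteq\mathcal{S}$. Lemma~\ref{l:wakamatsu-lemma}(1) then says $(\mathcal{S},\mathcal{S}^\perp)$ is a torsion pair of $\mathcal{T}$. Hence every $T\in\mathcal{T}$ sits in a triangle
\[
S_T\to T\to Y_T\to S_T[1]\quad\text{with } S_T\in\mathcal{S},\ Y_T\in\mathcal{S}^\perp.
\]
Because $\mathcal{S}$ is closed under $[1]$ and $[-1]$, $\mathcal{S}^\perp$ is also closed under shifts. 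It is closed under cones by the five lemma applied to $\Hom(S,-)$. So $\mathcal{S}^\perp$ is a triangulated subcategory, and the restriction of $\mathbb{L}$ to it is a triangle functor.

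\emph{Step 2: essential surjectivity.} In the triangle above, $S_T\in\mathcal{S}$, so $T\to Y_T$ becomes an isomorphism in $\mathcal{T}/\mathcal{S}$. Thus $\mathbb{L}T\cong\mathbb{L}Y_T$ with $Y_T\in\mathcal{S}^\perp$.

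\emph{Step 3: full faithfulness.} This is the main step. Fix $X\in\mathcal{T}$ and $Y\in\mathcal{S}^\perp$. We show that $\mathbb{L}$ induces a bijection $\Hom_\mathcal{T}(X,Y)\to\Hom_{\mathcal{T}/\mathcal{S}}(X,Y)$; taking $X\in\mathcal{S}^\perp$ then gives full faithfulness.

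Morphisms in the quotient are roofs $X\xleftarrow{s}X'\xrightarrow{f}Y$ with $\operatorname{cone}(s)\in\mathcal{S}$. Rotating the triangle of $s$ gives $X'\to X\to S\to X'[1]$ with $S\in\mathcal{S}$. Apply $\Hom(-,Y)$. Both $\Hom(S,Y)$ and $\Hom(S[-1],Y)$ vanish, the latter because $S[-1]\in\mathcal{S}$. So $s^*\colon\Hom(X,Y)\to\Hom(X',Y)$ is bijective. Hence $f$ factors uniquely as $f=g s$, and the roof is equivalent to the honest morphism $g$; this proves surjectivity.

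For injectivity, suppose $\mathbb{L}g=0$. Then $g s=0$ for some $s$ with cone in $\mathcal{S}$. The bijectivity of $s^*$ just shown forces $g=0$.

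So the restriction is fully faithful. Together with Step 2 it is a triangle equivalence, which proves (1); (2) follows by duality as explained at the start.
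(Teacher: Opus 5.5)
Your proof is correct. The paper gives no argument of its own for this lemma. It records the lemma as well known and refers to \cite[Lemma 2.2]{DF}, so there is no competing route to compare with. What you have written is the standard self-contained proof. The paper's Wakamatsu lemma (Lemma~\ref{l:wakamatsu-lemma}) gives the torsion pair $(\mathcal{S},\mathcal{S}^\perp)$ and hence essential surjectivity. The calculus of fractions in the Verdier quotient gives full faithfulness.

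Your write-up also makes clear how the hypotheses are distributed. Step~3 shows that $\Hom_{\mathcal{T}}(X,Y)\to\Hom_{\mathcal{T}/\mathcal{S}}(X,Y)$ is bijective for every $X\in\mathcal{T}$ and $Y\in\mathcal{S}^\perp$. That step uses only that $\mathcal{S}$ is a triangulated subcategory closed under shifts. So contravariant (resp.\ covariant) finiteness enters solely through essential surjectivity, via Lemma~\ref{l:wakamatsu-lemma}. The usual proof of that lemma uses minimal approximations, which are available in the Hom-finite Krull--Schmidt setting where the paper applies it.
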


\subsection{Silting theory} Let $\mathcal{T}$ be a $\Hom$-finite Krull-Schmidt triangulated category. 
An object $T\in \mathcal{T}$ is  called  {\em presilting}  if $\Hom_{\mathcal{T}}(T,T[>0])=0$.
A presilting object $T\in \mathcal{T}$ is said to be {\em silting} if $\thick T=\mathcal{T}$. A silting object $T$ is called \emph{tilting} if it also satisfies $\Hom_{\mathcal{T}}(T,T[<0])=0$. The following result provides a characterization of the thick subcategory generated by a presilting object.
\begin{lemma}\cite[Proposition 2.17]{AiharaIyama}\label{lem:thick-subcat-as-extension}
    Let $D\in \mathcal{T}$ be a presilting object. Then 
    \[
    \thick D=\bigcup\limits_{l \geq 0} \add (D[-l]) * \add (D[-l + 1]) * \cdots * \add (D[l]).
    \]
\end{lemma}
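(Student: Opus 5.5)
Write $\mathcal{U}_l := \add (D[-l]) * \add (D[-l+1]) * \cdots * \add (D[l])$ and $\mathcal{U} := \bigcup_{l\ge 0}\mathcal{U}_l$. The plan is to prove the two inclusions $\mathcal{U}\subseteq \thick D$ and $\thick D\subseteq\mathcal{U}$ separately. The first is immediate: $\thick D$ contains every shift $D[i]$, is closed under direct summands and finite sums, and is closed under extensions. By induction on the number of factors in the $*$-product, each $\mathcal{U}_l$ lies in $\thick D$.

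For the reverse inclusion, it suffices to show that $\mathcal{U}$ is a thick subcategory containing $D$. Clearly $D\in\mathcal{U}_0$. The family $\mathcal{U}_l$ is increasing in $l$, since $0$ lies in each $\add(D[i])$; so it is enough to handle finitely many objects at a time inside a single $\mathcal{U}_l$. The required closure properties are checked in the following order.

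\textbf{Shifts.} Shifting gives $\mathcal{U}_l[\pm1]\subseteq\mathcal{U}_{l+1}$, so $\mathcal{U}$ is closed under $[\pm1]$.

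\textbf{Extensions.} If $X\to E\to Y\to X[1]$ is a triangle with $X,Y\in\mathcal{U}_l$, then $E\in\mathcal{U}_l*\mathcal{U}_l$. This is a $*$-product of shifted copies of $\add D$, but with the shifts out of order. Associativity of $*$ (octahedral axiom) lets us regroup freely. To sort the factors, one uses the presilting hypothesis: for $i<j$ we have $\Hom(D[i],D[j][1])=\Hom(D,D[j-i+1])=0$. Hence any object of $\add(D[i])*\add(D[j])$ with $i<j$, wait, we need the reverse: we must move a factor $\add(D[j])$ appearing to the left of $\add(D[i])$ with $j>i$. In an object $E\in\add(D[j])*\add(D[i])$ the connecting morphism lies in $\Hom(D[i],D[j][1])=0$, so the triangle splits and $E\in\add(D[i])*\add(D[j])$. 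Using this swap repeatedly, bubble-sort style, one rewrites $\mathcal{U}_l*\mathcal{U}_l$ into the increasing-shift form. Adjacent equal shifts merge via $\add(D[i])*\add(D[i])=\add(D[i])$, which holds because $\Hom(D[i],D[i][1])=0$ and so such extensions split. Thus $\mathcal{U}_l*\mathcal{U}_l\subseteq\mathcal{U}_l$.

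\textbf{Cones.} Since $\mathcal{U}$ is closed under shifts and extensions, it is closed under cones and cocones.

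\textbf{Direct summands.} This is the step I expect to be the main obstacle, since closure of $*$-products under summands is not automatic. I would argue via the standard trick: if $X\oplus X'\in\mathcal{U}_l$, use the already-established closure under shifts and extensions to realize $X$ within the sorted filtration. Concretely, I would first prove that objects of $\mathcal{U}_l$ admit a filtration whose factors are functorial in the sense of truncation. The key observation is that $\add(D[>m])$-parts are orthogonal in the appropriate direction, so $\mathcal{U}_{>m}*\mathcal{U}_{\le m}$ behaves like a stable t-structure-type decomposition: the first triangle $E_{\ge m}\to E\to E_{<m}$ is determined up to isomorphism by orthogonality. If this functoriality is established, the decomposition of $X\oplus X'$ splits into decompositions of $X$ and $X'$, and the pieces of $X$ are summands of objects in $\add(D[i])$, which is closed under summands. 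Inducting on the number of factors then gives $X\in\mathcal{U}_l$. If functoriality fails, the fallback is the idempotent-splitting argument used in \cite{AiharaIyama}: from $e\colon X\oplus X'\to X\oplus X'$, build the triangle $X\oplus X'\to X\oplus X'\to X'\oplus X'[1]$ and use the Eilenberg swindle within bounded shifts. This is where care is needed.

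Once all four closure properties hold, $\mathcal{U}$ is a thick subcategory containing $D$. Hence $\thick D\subseteq\mathcal{U}$, which completes the proof.
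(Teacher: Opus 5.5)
Your treatment of shifts, extensions and cones is correct and is the standard argument. The swap $\add(D[j])*\add(D[i])\subseteq\add(D[i])*\add(D[j])$ for $i<j$ holds because the connecting morphism vanishes, as $\Hom(D,D[j-i+1])=0$. Together with the merging $\add(D[i])*\add(D[i])=\add(D[i])$ and associativity of $*$, it gives $\mathcal{U}_l*\mathcal{U}_l\subseteq\mathcal{U}_l$.

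The gap is closure under direct summands. You rightly flag it as the crux, but you do not prove it, and neither of your two routes works as written.

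\emph{Orientation.} In $\mathcal{U}_l$ the low shifts come first. So the relevant splitting is $\mathcal{U}_l=\mathcal{X}*\mathcal{Y}$ with $\mathcal{X}=\add(D[-l])*\cdots*\add(D[m])$ and $\mathcal{Y}=\add(D[m+1])*\cdots*\add(D[l])$, and the available orthogonality is $\Hom(\mathcal{X},\mathcal{Y})=0$. Your $\mathcal{U}_{>m}*\mathcal{U}_{\le m}$ would instead require $\Hom(D,D[<0])=0$, which presilting does not give.

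\emph{No functorial truncation.} Even with the correct orientation, the triangle $E_{\le m}\to E\to E_{>m}\to E_{\le m}[1]$ is \emph{not} determined up to isomorphism by orthogonality. Already $E=0$ admits both the zero triangle and the rotation $D[m]\to 0\to D[m+1]\xrightarrow{\sim}D[m+1]$ of $D[m]\xrightarrow{1}D[m]\to 0\to D[m+1]$. In general one may always add a summand $X'\to 0\to X'[1]$ with $X'\in\mathcal{X}$ and $X'[1]\in\mathcal{Y}$. So there is no canonical truncation that you could restrict to a summand.

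\emph{The fallback.} The cone of the idempotent only shows $X'\oplus X'[1]\in\mathcal{U}$. An Eilenberg swindle needs infinite direct sums, which are not available here. Closure under shifts, extensions and cones does not by itself imply closure under summands, so a genuinely new input is needed.

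The missing ingredient is the Iyama--Yoshino lemma. It is the same fact the paper quotes from Aihara--Iyama in the proof of Lemma~\ref{l:no-cycle-plus}(b): in a Krull--Schmidt triangulated category, if $\mathcal{X},\mathcal{Y}$ are closed under direct summands and $\Hom(\mathcal{X},\mathcal{Y})=0$, then $\mathcal{X}*\mathcal{Y}$ is closed under direct summands.

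To see it, take a triangle $X\xrightarrow{u}Z_1\oplus Z_2\to Y\to X[1]$ with $X\in\mathcal{X}$ and $Y\in\mathcal{Y}$.
\begin{enumerate}
\item Orthogonality makes $u$ a right $\mathcal{X}$-approximation. Hence its composites with the projections are right $\mathcal{X}$-approximations of $Z_1$ and $Z_2$.
\item Take minimal ones $u_i\colon X_i\to Z_i$. Then $u_1\oplus u_2$ is a minimal right $\mathcal{X}$-approximation of $Z_1\oplus Z_2$.
\item It follows that $X\cong X_1\oplus X_2\oplus X'$ with $u$ corresponding to $(u_1\oplus u_2,0)$. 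Therefore $Y\cong\operatorname{cone}(u_1)\oplus\operatorname{cone}(u_2)\oplus X'[1]$.
\item Hence $X_1\in\mathcal{X}$ and $\operatorname{cone}(u_1)\in\mathcal{Y}$, so $Z_1\in\mathcal{X}*\mathcal{Y}$.
\end{enumerate}

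Now apply this with $\mathcal{X}=\add(D[a])$ and $\mathcal{Y}=\add(D[a+1])*\cdots*\add(D[b])$. Here $\Hom(\mathcal{X},\mathcal{Y})=0$, since $\Hom(D,D[>0])=0$ and such vanishing passes to extensions. Inducting on $b-a$ shows that every $\mathcal{U}_l$, and hence $\mathcal{U}$, is closed under direct summands. With this step supplied, your proof is complete.
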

While there exist triangulated categories that do not possess silting objects, if $\mathcal{T}$ admits a silting object $T$, then its Grothendieck group $K_0(\mathcal{T})$ is a free abelian group of rank $|T|$ \cite[Theorem 2.27]{AiharaIyama}. Consequently, all silting objects in $\mathcal{T}$ have the same number of pairwise non-isomorphic indecomposable direct summands.

Let $T=M\oplus \overline{T}$ be a basic silting object of $\mathcal{T}$. Consider the triangle
\[M\xrightarrow{f_M} T_M\to N\to M[1],
\]
where $f_M$ is a minimal left $\add \overline{T}$-approximation of $M$. According to \cite[Theorem 2.31]{AiharaIyama},  the object $\mu_M^{-}(T) := N \oplus \overline{T}$ is again a basic silting object of $\mathcal{T}$, called the {\em left mutation} of $T$ with respect to $M$.

We denote by $\opname{silt} \mathcal{T}$ the set of isomorphism classes of basic silting objects in $\mathcal{T}$.

\begin{theorem}\cite[Theorem 2.37]{AiharaIyama}\label{t:silting-reduction}
	Let $\mathcal{T}$ be a Krull-Schmidt triangulated category, $\mathcal{S}$ a functorially finite thick subcategory of $\mathcal{T}$ and $\mathcal{T}/\mathcal{S}$ the Verdier quotient. Denote by $\mathbb{L}:\mathcal{T}\to \mathcal{T}/\mathcal{S}$ the localization functor. For any $D\in \opname{silt}\ \mathcal{S}$,  there is a bijective map
	\[\{T\in \opname{silt}\ \mathcal{T}\mid D\in \add T\}\to \opname{silt}\ \mathcal{T}/\mathcal{S}
	\] given by $T\mapsto \mathbb{L}(T)$.
\end{theorem}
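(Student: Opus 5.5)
The plan follows Aihara--Iyama and works through an intermediate subcategory. Put
\[
\mathcal{Z}:=\{X\in\mathcal{T}\mid \Hom_\mathcal{T}(D,X[>0])=0=\Hom_\mathcal{T}(X,D[>0])\}.
\]
Every silting object $T$ with $D\in\add T$ lies in $\mathcal{Z}$, simply because $T$ is presilting. The heart of the argument is the claim that the localization functor $\mathbb{L}$ induces an equivalence of additive categories
\[
\mathcal{Z}/[\add D]\xrightarrow{\ \sim\ }\mathcal{T}/\mathcal{S},
\]
and that this equivalence controls morphisms to positive shifts well enough to detect the presilting condition. Once this is available, the bijection follows formally.

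\emph{Step 1: torsion pairs.} Let $\mathcal{S}_{\ge 0}$ be the extension closure of $\add D[\ge 0]$ and $\mathcal{S}_{\le 0}$ that of $\add D[\le 0]$. Using Lemma \ref{lem:thick-subcat-as-extension}, every object of $\mathcal{S}=\thick D$ lies in some $\mathcal{S}_{\ge -l}\cap\mathcal{S}_{\le l}$, and these subcategories are extension closed. Since $\mathcal{S}$ is functorially finite, I will deduce with Lemma \ref{l:X*Y-functorially-finite} that $\mathcal{S}_{\ge 1}$ is contravariantly finite and $\mathcal{S}_{\le -1}$ is covariantly finite. Wakamatsu's Lemma \ref{l:wakamatsu-lemma} then gives torsion pairs $(\mathcal{S}_{\ge 1},\mathcal{S}_{\ge 1}^{\perp})$ and $({}^{\perp}\mathcal{S}_{\le -1},\mathcal{S}_{\le -1})$ in $\mathcal{T}$. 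Combining these with Lemma \ref{l:perp-quotient}, every object of $\mathcal{T}/\mathcal{S}$ is isomorphic to $\mathbb{L}X$ for some $X\in\mathcal{Z}$. To get such an $X$, truncate an object first by the first torsion pair and then by the second, and check that the second truncation does not destroy the first condition.

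\emph{Step 2: morphisms.} For $X,Y\in\mathcal{Z}$ I will show that
\[
\Hom_{\mathcal{T}/\mathcal{S}}(\mathbb{L}X,\mathbb{L}Y)\cong\Hom_\mathcal{T}(X,Y)/[D](X,Y).
\]
In addition, for $n>0$ every morphism $\mathbb{L}X\to\mathbb{L}Y[n]$ should be represented by a genuine morphism $X\to Y[n]$. I plan to do this with roofs $X\xleftarrow{s}X'\to Y$ whose cone $C$ lies in $\mathcal{S}$. Decompose $C$ by the two torsion pairs of Step 1. The defining vanishing conditions of $\mathcal{Z}$ should then let $s$ be split off, with only the pieces factoring through $\add D$ surviving. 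I expect this to be the main obstacle. The bookkeeping of which shifts of $D$ appear in the cone, and why the conditions on $\mathcal{Z}$ kill exactly the unwanted parts, has to be done carefully, especially in the positive-shift statement.

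\emph{Step 3: the bijection.} Well-definedness: if $T$ is silting with $D\in\add T$, then $T\in\mathcal{Z}$, so Step 2 gives $\Hom(\mathbb{L}T,\mathbb{L}T[n])=0$ for $n>0$. Moreover $\thick\mathbb{L}T\supseteq\mathbb{L}(\thick T)=\mathcal{T}/\mathcal{S}$, so $\mathbb{L}T$ is silting. Injectivity: if $\mathbb{L}T\cong\mathbb{L}T'$, then by the equivalence $T$ and $T'$ agree modulo $\add D$. Both are basic and contain $D$, so $T\cong T'$. Surjectivity: given a basic silting $U$ of $\mathcal{T}/\mathcal{S}$, choose $X\in\mathcal{Z}$ with no summand in $\add D$ and $\mathbb{L}X\cong U$, which is possible by Step 1, and put $T:=X\oplus D$. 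Then $\Hom(X,D[>0])$, $\Hom(D,X[>0])$ and $\Hom(D,D[>0])$ vanish by the definition of $\mathcal{Z}$ and because $D$ is presilting. For $\Hom(X,X[n])$ with $n>0$, I will use the positive-shift part of Step 2 to identify it with a quotient of $\Hom(U,U[n])=0$, refining the choice of $X$ if needed so that this map is injective. Finally, $\thick T$ contains $\mathcal{S}$, and its image under $\mathbb{L}$ contains $\thick U=\mathcal{T}/\mathcal{S}$. Since a thick subcategory containing $\mathcal{S}$ is the full preimage of its image, $\thick T=\mathcal{T}$.
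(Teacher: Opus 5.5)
There is a genuine gap at the one point where the real work lies: showing that your lift $X\oplus D$ is presilting. For this you need $\Hom_{\mathcal{T}}(X,X[n])=0$ for $n>0$. That is, you need the comparison map $\Hom_{\mathcal{T}}(X,X[n])\to\Hom_{\mathcal{T}/\mathcal{S}}(U,U[n])$ to be \emph{injective}. Your Step 2 only promises that morphisms $\mathbb{L}X\to\mathbb{L}Y[n]$ \emph{lift}. That is surjectivity, and it makes $\Hom(U,U[n])$ a quotient of $\Hom(X,X[n])$, which is the wrong direction. ``Refining the choice of $X$ if needed'' is not an argument, and you do not say what refinement would help.

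The missing observation is that no refinement is needed. Let $X,Y\in\mathcal{Z}$ and $n\ge 1$, and let $f\colon X\to Y[n]$ satisfy $\mathbb{L}f=0$. Then $f$ factors as $X\xrightarrow{a}E\xrightarrow{b}Y[n]$ with $E\in\mathcal{S}$. By Lemma~\ref{lem:thick-subcat-as-extension} there is a triangle $E'\xrightarrow{u}E\to E''\to E'[1]$ with $E'$ in the extension closure of $\add D[\le 0]$ and $E''\in\mathcal{S}_{\ge 1}$. Since $\Hom(X,D[>0])=0$, we get $\Hom(X,E'')=0$, so $a=uc$ for some $c$. Since $\Hom(D,Y[>0])=0$ and $n\ge 1$, we get $\Hom(E',Y[n])=0$. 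Hence $f=buc=0$.

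So the comparison map is injective for all $X,Y\in\mathcal{Z}$ and $n\ge1$. Surjectivity also holds: replace $Y[n]$ by its $\mathcal{S}^\perp$-part. Its $\mathcal{S}$-part $S$ satisfies $\Hom(D[j],S)=0$ for $j<n$, so $S$ is a summand of an object of $\mathcal{S}_{\ge n}$ and $\Hom(X,S[1])=0$. The map is therefore bijective. This single lemma settles both well-definedness and the presilting property in your surjectivity step, and it is the core of the Aihara--Iyama proof that the paper cites.

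Step 1 also truncates on the wrong side. With your conventions, $\mathcal{S}_{\ge 1}^{\perp}=\{X\mid \Hom(D,X[<0])=0\}$ and ${}^{\perp}\mathcal{S}_{\le -1}=\{X\mid \Hom(X,D[<0])=0\}$. So truncating with $(\mathcal{S}_{\ge 1},\mathcal{S}_{\ge1}^{\perp})$ and $({}^{\perp}\mathcal{S}_{\le-1},\mathcal{S}_{\le -1})$ does not land in $\mathcal{Z}$; that subcategory does not even contain $D$ unless $D$ is tilting. You need $(\mathcal{S}_{\le -1},\mathcal{S}_{\le -1}^{\perp})$ and $({}^{\perp}\mathcal{S}_{\ge 1},\mathcal{S}_{\ge 1})$. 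That requires $\mathcal{S}_{\le -1}$ to be contravariantly finite and $\mathcal{S}_{\ge 1}$ covariantly finite. These are infinite unions, so Lemma~\ref{l:X*Y-functorially-finite} does not give them directly. Instead, compose an $\mathcal{S}$-approximation with the truncation triangles of the torsion pair $(\mathcal{S}_{\le 0},\mathcal{S}_{\ge 1})$ of $\mathcal{S}$, which comes from Lemma~\ref{lem:thick-subcat-as-extension}.

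It is simpler still to follow the paper, whose recalled inverse map from Aihara--Iyama does exactly this. Represent $U$ by some $N\in\mathcal{S}^\perp$ using Lemma~\ref{l:perp-quotient}, so the first truncation comes for free. Let $T_N$ be the cocone of a minimal left $\mathcal{S}_{\ge 1}$-approximation $N\to S_N[1]$; your $\mathcal{S}_{\ge1}$ is the paper's $\mathcal{S}_D^{<0}$. Wakamatsu's lemma gives $\Hom(T_N,D[>0])=0$. Also $S_N\in\mathcal{S}_{\ge 0}$ and $N\in\mathcal{S}^\perp$ together give $\Hom(D,T_N[>0])=0$.

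With these two repairs, your route through $\mathcal{Z}/[\add D]\simeq\mathcal{T}/\mathcal{S}$, in the style of Iyama--Yang, is a valid alternative. It yields more structure than the explicit construction $N\mapsto T_N\oplus D$ used in the cited proof, which is all the bijection actually needs.
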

Let us recall the inverse map of the aforementioned bijection, following the proof of \cite[Theorem 2.37]{AiharaIyama}.
Let $\mathcal{S}_D^{\leq 0}:=\cup_{l\geq 0}\add D*\add D[1]*\cdots*\add D[l]$ and $\mathcal{S}_{D}^{<0}:=\mathcal{S}_D^{\leq 0}[1]$. It is known that $\mathcal{S}_{D}^{<0}$ is covariantly finite in $\mathcal{T}$.  Furthermore, as $\mathcal{S}$ is functorially finite, we can identify $\mathcal{T}/\mathcal{S}$ with $\mathcal{S}^\perp$. Let $N\in \mathcal{S}^\perp$ be a silting object in $\mathcal{S}^\perp$.
Consider the following triangle:
\[
S_N\to T_N\to N\xrightarrow{g}S_N[1],
\]
where $S_N[1]\in \mathcal{S}_{D}^{<0}$ and $g$ is a minimal left $\mathcal{S}_{D}^{<0}$-approximation of $N$. According to the proof of \cite[Theorem 2.37]{AiharaIyama},  $T_N\oplus D$ is a silting object in $\mathcal{T}$ such that $\mathbb{L}(T_N\oplus D)=\mathbb{L}(N)$.

\subsection{Perpendicular subcategories in hereditary categories} Let $\mathcal{H}$ be a hereditary abelian category, i.e., an abelian category satisfying $\Ext_{\mathcal{H}} ^2 (-, -) = 0$. For any object $E \in \mathcal{H}$, we define
\[
E^{\perp_{[0, 1]}} = \{X \in \mathcal{H}\mid \Hom_{\mathcal{H}}(E, X) = 0 = \Ext_{\mathcal{H}}^1 (E, X) \}.
\]
Then we have
\begin{lemma}\label{l: perp-cat-is-hereditary}
The subcategory $E^{\perp_{[0, 1]}}$ is a hereditary abelian subcategory of $\mathcal{H}$. In particular $E^{\perp_{[0, 1]}}$ is closed under taking images, kernels, and cokernels.
\end{lemma}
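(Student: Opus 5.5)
The plan is to verify directly that $\mathcal{E}' := E^{\perp_{[0,1]}}$ is an exact abelian subcategory of $\mathcal{H}$, that is, that it is closed under kernels, cokernels and extensions. Once this is done, $\mathcal{E}'$ is abelian with the induced exact structure. It is then hereditary, because $\Ext^1$ computed in $\mathcal{E}'$ agrees with $\Ext^1_{\mathcal{H}}$ via Yoneda extensions (extension-closure), and $\Ext^2$ vanishes as we argue below. Closure under images follows from closure under kernels and cokernels, since $\operatorname{im} f = \ker(\operatorname{coker} f)$.

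\textbf{Extensions.} Let $0\to X\to Y\to Z\to 0$ be exact with $X,Z\in\mathcal{E}'$. Applying $\Hom_{\mathcal{H}}(E,-)$ gives the exact sequence
\[
\Hom(E,X)\to\Hom(E,Y)\to\Hom(E,Z)\to\Ext^1(E,X)\to\Ext^1(E,Y)\to\Ext^1(E,Z).
\]
The outer terms vanish, so $\Hom(E,Y)=0=\Ext^1(E,Y)$.

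\textbf{Kernels and cokernels.} This is the key step, and it uses heredity of $\mathcal{H}$. Let $f\colon X\to Y$ with $X,Y\in\mathcal{E}'$, and set $K=\ker f$, $I=\operatorname{im} f$, $C=\operatorname{coker} f$. We use the short exact sequences $0\to K\to X\to I\to 0$ and $0\to I\to Y\to C\to 0$.

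First consider $0\to I\to Y\to C\to 0$. Since $I\subseteq Y$, left exactness gives $\Hom(E,I)\hookrightarrow\Hom(E,Y)=0$.

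Next consider $0\to K\to X\to I\to 0$. The long exact sequence gives
\[
0=\Hom(E,X)\to\Hom(E,I)\to\Ext^1(E,K)\to\Ext^1(E,X)=0.
\]
Also $\Hom(E,K)\subseteq\Hom(E,X)=0$. Hence $\Ext^1(E,K)\cong\Hom(E,I)=0$. Continuing the same sequence, $\Ext^1(E,X)\to\Ext^1(E,I)\to\Ext^2(E,K)=0$, so $\Ext^1(E,I)$ is a quotient of $\Ext^1(E,X)=0$. Thus $I$ and $K$ both lie in $\mathcal{E}'$.

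Finally, for $C$, the sequence $0\to I\to Y\to C\to 0$ gives
\[
\Hom(E,Y)=0\to\Hom(E,C)\to\Ext^1(E,I)=0,
\]
so $\Hom(E,C)=0$. It also gives $\Ext^1(E,Y)=0\to\Ext^1(E,C)\to\Ext^2(E,I)=0$, so $\Ext^1(E,C)=0$. Hence $K$, $I$ and $C$ all lie in $\mathcal{E}'$.

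\textbf{Conclusion.} $\mathcal{E}'$ is a full subcategory closed under kernels, cokernels and extensions, so it is abelian and the inclusion is exact. Its Yoneda $\Ext^n$ groups map to those of $\mathcal{H}$. For heredity, the cleanest route is to note that $\Ext^1_{\mathcal{E}'}=\Ext^1_{\mathcal{H}}$ restricted to $\mathcal{E}'$ by extension-closure, and that right exactness of $\Ext^1_{\mathcal{E}'}(X,-)$ on $\mathcal{E}'$ is inherited from $\mathcal{H}$. This right exactness is equivalent to the vanishing of $\Ext^2_{\mathcal{E}'}$, via the standard criterion that $\Ext^2=0$ if and only if every $\Ext^1$ is right exact. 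The only subtle point is this $\Ext^2$ comparison, which the criterion handles. Hom-finiteness and Ext-finiteness of $\mathcal{E}'$ are inherited directly from $\mathcal{H}$.
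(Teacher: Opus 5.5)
Your proof is correct and follows essentially the same route as the paper. The paper also treats closure under kernels, cokernels and images as routine (you verify these, and also the extension-closure the paper uses tacitly when it identifies $\Ext^1_{E^{\perp_{[0,1]}}}$ with $\Ext^1_{\mathcal{H}}$). It then kills an arbitrary $2$-extension by splitting it at $\im f$ and noting that the connecting map $\Ext^1_{\mathcal{H}}(X,\im f)\to\Ext^2_{E^{\perp_{[0,1]}}}(X,Y)$ vanishes because $\Ext^1_{\mathcal{H}}(X,E_1)\to\Ext^1_{\mathcal{H}}(X,\im f)$ is surjective; this is exactly the instance of your ``standard criterion'' (right exactness of $\Ext^1$ forces $\Ext^2=0$) that is needed, so the two proofs differ only in whether that step is cited or written out.
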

\begin{proof}
One can readily verify that $E^{\perp_{[0, 1]}}$ is closed under taking images, kernels, and cokernels in $\mathcal{H}$; consequently, it is an abelian subcategory of $\mathcal{H}$. It suffices to show that $\Ext_{E^{\perp_{[0, 1]}}} ^2 (X, Y) = 0$ for any $X, Y \in E^{\perp_{[0, 1]}}$. Let $\varepsilon \in \Ext_{E^{\perp_{[0, 1]}}}^2(X, Y)$ be a $2$-extension represented by the following exact sequence in $E^{\perp_{[0, 1]}}$:
\[
\xymatrix@-1.8pc@R=10pt{
0  \ar[rr] &&Y \ar[rr] && E_1 \ar[rr]^f \ar[dr]_u &&E_2 \ar[rr] &&X \ar[rr] &&0. \\
&&&&& \im f \ar[ur]_v
}
\]
Since $E^{\perp_{[0, 1]}}$ is an abelian subcategory of $\mathcal{H}$, we obtain the following two exact sequences in $E^{\perp_{[0, 1]}}$:
\begin{align}
0 \to Y \to E_1 \xra{u} \im f \to 0,\label{exseq:21}\\
0 \to \im f \xra{v} E_2 \to X \to 0.
\end{align}
Let these sequences represent $\varepsilon_1 \in \Ext_{E^{\perp_{[0, 1]}}}^1(\im f, Y)$ and $\varepsilon_2 \in \Ext_{E^{\perp_{[0, 1]}}}^1(X, \im f)$, respectively.
 Recall that for any $M, N \in E^{\perp_{[0, 1]}}$, $\Ext_{E^{\perp_{[0, 1]}}}^1 (M, N) = \Ext_{\mathcal{H}}^1 (M, N)$. Applying the functor $\Hom_{E^{\perp_{[0, 1]}}}(X, -)$ to \eqref{exseq:21} yields the following exact sequence
\[
\cdots \to \Ext_{\mathcal{H}}^1 (X, E_1) \xra{u_*} \Ext_{\mathcal{H}}^1 (X, \im f) \xra{\eta} \Ext_{E^{\perp_{[0, 1]}}}^2 (X, Y) \to \cdots.
\]
By definition,  $\varepsilon$ is the image of $\varepsilon_2$ under the connecting map $\eta$, i.e., $\varepsilon = \eta(\varepsilon_2)$. On the other hand, since $\mathcal{H}$ is hereditary, the map $u_*$ is surjective, which implies $\eta = 0$. It follows that  $\varepsilon = 0$. As $\varepsilon$ was chosen arbitrarily, we conclude that $\Ext_{E^{\perp_{[0, 1]}}} ^2 (X, Y) = 0$.
\end{proof}

Let  $\der^b (\mathcal{H})$ be the bounded derived category of $\mathcal{H}$. Generalizing the definition of $E^{\perp_{[0, 1]}}$ to an arbitrary object $D \in \mathcal{D}^b(\mathcal{H})$, we consider the subcategory
\[
D^{\perp_{\mathbb{Z}}} = \{M \in \mathcal{H}\mid \Hom_{\der^b(\mathcal{H})}(D, M[\mathbb{Z}]) = 0 \}.
\]
The following result extends Lemma \ref{l: perp-cat-is-hereditary} to this setting.

\begin{proposition}\label{p: perp-for-presilting}
The subcategory $D^{\perp_{\mathbb{Z}}}$ is a hereditary abelian subcategory of $\mathcal{H}$. In particular, $D^{\perp_{\mathbb{Z}}}$ is closed under taking images, kernels, and cokernels.
\end{proposition}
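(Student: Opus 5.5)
Since $\mathcal{H}$ is hereditary, every object of $\der^b(\mathcal{H})$ is isomorphic to the direct sum of its shifted cohomology objects. So I would first write $D\cong\bigoplus_{i} H^i(D)[-i]$, a finite sum with each $H^i(D)\in\mathcal{H}$. For $M\in\mathcal{H}$ this gives
\[
\Hom_{\der^b(\mathcal{H})}(D,M[n])\cong\bigoplus_i\Hom_{\der^b(\mathcal{H})}(H^i(D),M[n+i]),
\]
and the summand $\Hom(H^i(D),M[j])$ is $\Ext^j_{\mathcal{H}}(H^i(D),M)$. This vanishes automatically unless $j\in\{0,1\}$, because $\mathcal{H}$ is hereditary and $\Hom(X,Y[j])=0$ for $j<0$ when $X,Y\in\mathcal{H}$. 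Letting $n$ range over $\mathbb{Z}$, the condition $\Hom(D,M[\mathbb{Z}])=0$ therefore says exactly that $\Hom_{\mathcal{H}}(H^i(D),M)=0=\Ext^1_{\mathcal{H}}(H^i(D),M)$ for every $i$. Putting $E:=\bigoplus_i H^i(D)\in\mathcal{H}$, which is a finite direct sum, we get
\[
D^{\perp_{\mathbb{Z}}}=E^{\perp_{[0,1]}}.
\]

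Lemma \ref{l: perp-cat-is-hereditary} then gives directly that $D^{\perp_{\mathbb{Z}}}$ is a hereditary abelian subcategory of $\mathcal{H}$, closed under images, kernels and cokernels. The presilting hypothesis is not needed; the argument works for arbitrary $D$.

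The only step that needs care is the splitting of complexes over a hereditary abelian category. This is standard (see, e.g., Krause's notes or \cite{HRS}) and uses only $\Ext^2_{\mathcal{H}}=0$. If I wanted to avoid quoting it, I would induct on the length of the cohomology. Take the truncation triangle $\tau_{<m}D\to D\to H^m(D)[-m]\to(\tau_{<m}D)[1]$, where $m$ is the top nonzero degree. Its connecting morphism lies in a sum of groups $\Ext^{\geq 2}_{\mathcal{H}}(H^m(D),H^i(D))$ with $i<m$, so it vanishes and the triangle splits. Everything else is bookkeeping of degrees.
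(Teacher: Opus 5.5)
Your proof is correct and follows essentially the paper's route. Both arguments reduce $D^{\perp_{\mathbb{Z}}}$ to $E^{\perp_{[0,1]}}$ for a single object $E\in\mathcal{H}$ and then invoke Lemma \ref{l: perp-cat-is-hereditary}: the paper takes $E=\bigoplus X_i$ from a decomposition of $D$ into indecomposable stalk complexes $X_i[n_i]$, while you take $E=\bigoplus_i H^i(D)$. Your truncation-triangle argument using $\Ext^{\geq 2}_{\mathcal{H}}=0$ makes explicit the splitting fact that the paper uses implicitly.
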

\begin{proof}
Let $D = D_1 \oplus D_2 \oplus \cdots \oplus D_r$ be the decomposition of $D$ into indecomposable objects. Then, each $D_i$ is of the form $X_i[n_i]$ for some indecomposable object $X_i \in \mathcal{H}$ and $n_i \in \mathbb{Z}$. By the definition of $D^{\perp_{\mathbb{Z}}}$, we have
\[D^{\perp_{\mathbb{Z}}} = \bigcap D_i^{\perp_{\mathbb{Z}}} = \bigcap X_i^{\perp_{\mathbb{Z}}} = \bigcap X_i^{\perp_{[0, 1]}}=(\oplus X_i)^{\perp_{[0,1]}}.\]  The assertion then follows immediately from Lemma \ref{l: perp-cat-is-hereditary}.
\end{proof}

\begin{proposition}\label{p:identification-subcategory}
    Let $D\in \der^b(\mathcal{H})$. Then there exists a triangle equivalence
\[\iota : \der^b(D^{\perp_{\mathbb{Z}}}) \xrightarrow{\sim} (\thick{D})^\perp.\]
\end{proposition}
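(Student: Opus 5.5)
Write $\mathcal{A}:=D^{\perp_{\mathbb{Z}}}$; by Proposition \ref{p: perp-for-presilting} it is a hereditary abelian subcategory of $\mathcal{H}$ closed under kernels, cokernels and images. The plan is to build the functor $\iota$ from the inclusion of complexes, show that it is fully faithful, and then identify its essential image with $(\thick D)^\perp$.

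\emph{Step 1: fully faithful on $\mathcal{A}$.} First we check that for $X,Y\in\mathcal{A}$ the inclusion induces isomorphisms $\Ext^n_{\mathcal{A}}(X,Y)\cong\Ext^n_{\mathcal{H}}(X,Y)$ for all $n$. For $n=0$ this holds because $\mathcal{A}$ is full. For $n=1$ it holds because $\mathcal{A}$ is closed under extensions: if $0\to Y\to Z\to X\to 0$ is exact with $X,Y\in\mathcal{A}$, the long exact sequences for $\Hom(X_i,-)$, where $D_i=X_i[n_i]$ as in the proof of Proposition \ref{p: perp-for-presilting}, give $Z\in X_i^{\perp_{[0,1]}}$. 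For $n\ge 2$ both sides vanish, by Proposition \ref{p: perp-for-presilting} and because $\mathcal{H}$ is hereditary. Since both categories are hereditary, every object of $\der^b(\mathcal{A})$ and of $\der^b(\mathcal{H})$ is isomorphic to the direct sum of its shifted cohomologies. Hence the exact inclusion $\mathcal{A}\to\mathcal{H}$ induces a triangle functor $\iota:\der^b(\mathcal{A})\to\der^b(\mathcal{H})$ that is fully faithful: it suffices to check this on stalk complexes $X[m],Y[n]$, and there it is exactly the $\Ext$ comparison above. The one point to be careful about is that the induced map on $\Ext^1$ really is the comparison of Yoneda extensions. This is standard for an exact, extension-closed full subcategory.

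\emph{Step 2: the essential image.} Next we show that the essential image of $\iota$ is $(\thick D)^\perp$. Since $(\thick D)^\perp=\{M\mid\Hom(D[j],M)=0\ \forall j\}$, one inclusion is immediate. Take $M\cong\bigoplus_j H^j(M)[-j]$ with each $H^j(M)\in\mathcal{A}$. For every $j$ we have $\Hom(D,H^j(M)[\mathbb{Z}])=0$, so $M\in D^{\perp}$ up to all shifts. As $\{D[j]\}^\perp$ is closed under cones, shifts and summands, $M$ lies in $(\thick D)^\perp$. For the converse, take $M\in(\thick D)^\perp$ and write $M\cong\bigoplus_j H^j(M)[-j]$. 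Each summand is again in $(\thick D)^\perp$, because perpendicular categories are closed under direct summands. So $\Hom(D,H^j(M)[\mathbb{Z}])=0$, that is, $H^j(M)\in\mathcal{A}$, and therefore $M$ lies in the image of $\iota$.

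\emph{Main obstacle.} The only delicate point is Step 1: verifying that $\der^b(\mathcal{A})\to\der^b(\mathcal{H})$ is fully faithful. This needs the $\Ext^1$ identification and the degeneration of the $\Ext^{\ge2}$ terms, so we rely on the extension-closedness of $\mathcal{A}$ together with hereditarity on both sides. Everything else is formal, using the splitting of complexes over hereditary abelian categories.
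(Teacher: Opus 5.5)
Your proof is correct and follows essentially the same route as the paper. You extend the exact inclusion $D^{\perp_{\mathbb{Z}}}\hookrightarrow\mathcal{H}$ to $\der^b$ and deduce full faithfulness from the agreement of $\Hom$ and $\Ext^1$, usefully making explicit the extension-closedness and the reduction to stalk complexes that the paper leaves implicit; you then identify the essential image with $(\thick D)^\perp$ via the splitting $M\cong\bigoplus_j H^j(M)[-j]$.

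One small wording point: the inclusion $\{D[j]\}_{j}^{\perp}\subseteq(\thick D)^\perp$ holds because the objects $X$ with $\Hom(X[\mathbb{Z}],M)=0$ form a thick subcategory containing $D$, not because $\{D[j]\}^\perp$ is closed under cones and summands. Since you stated this equality at the outset, the argument is nonetheless complete.
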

\begin{proof}
    Consider the inclusion $D^{\perp_{\mathbb{Z}}} \hookrightarrow \mathcal{H}$. In view of Proposition \ref{p: perp-for-presilting}, this inclusion extends to a triangle functor $\iota : \der^b(D^{\perp_{\mathbb{Z}}}) \to \der^b(\mathcal{H})$, which maps a complex to itself. For any objects $X, Y \in D^{\perp_{\mathbb{Z}}}$, it follows from Proposition \ref{p: perp-for-presilting} that
\begin{align*}
&\Hom_{\der^b(D^{\perp_{\mathbb{Z}}})} (X, Y) = \Hom_{D^{\perp_{\mathbb{Z}}}} (X, Y) = \Hom_{\mathcal{H}} (X, Y) = \Hom_{\der^b(\mathcal{H})} (X, Y),\\
&\Hom_{\der^b(D^{\perp_{\mathbb{Z}}})} (X, Y[1]) = \Ext_{D^{\perp_{\mathbb{Z}}}}^1 (X, Y) = \Ext_{\mathcal{H}}^1 (X, Y) = \Hom_{\der^b(\mathcal{H})} (X, Y[1]).
\end{align*}
This implies that the induced triangle functor $\iota$ is fully faithful. Thus, we may view $\der^b(D^{\perp_{\mathbb{Z}}})$ as a full subcategory of $\der^b(\mathcal{H})$.

Let $\mathcal{S} = \thick{D}$. Clearly, any complex in $\der^b(D^{\perp_{\mathbb{Z}}})$ belongs to $\mathcal{S}^{\perp}$. Conversely, for any $X \in \mathcal{S}^{\perp}$, the fact that $\mathcal{H}$ is hereditary implies $X \cong \bigoplus_i H^i(X)[-i]$, which lies in $\der^b(D^{\perp_{\mathbb{Z}}})$.  Consequently, the image of $\iota$ is exactly $\mathcal{S}^\perp$, and thus $\iota : \der^b(D^{\perp_{\mathbb{Z}}}) \to (\thick{D})^\perp$ is a triangle equivalence.
\end{proof}

\subsection{Rigid objects and presilting complexes over hereditary categories}
Let $\mathcal{H}$ be a hereditary abelian category. Recall that an object $E \in \mathcal{H}$ is called \emph{rigid} if $\Ext_{\mathcal{H}}^1 (E, E) = 0$. In this context,
a presilting (resp. silting, tilting) object $T\in \der^b(\mathcal{H})$ is also referred to as a {\em presilting (resp. silting, tilting) complex} over $\mathcal{H}$.

The following fundamental result is due to Happel and Ringel~\cite{HappelRingel}.
\begin{lemma}\label{l:HR-exceptional}
Let $E$ and $F$ be indecomposable objects in $\mathcal{H}$ such that $\Hom_{\der^b(\mathcal{H})}(F, E[1])=0$. Then any nonzero homomorphism $f:E\to F$ is either a monomorphism or an epimorphism. In particular, the endomorphism ring of any indecomposable rigid object is a division algebra.
\end{lemma}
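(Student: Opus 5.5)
The statement to prove is Lemma \ref{l:HR-exceptional}, the Happel--Ringel lemma. The plan is to use the hereditary property to split $f$ into an epimorphism onto its image followed by a monomorphism. Then the vanishing of $\Hom_{\der^b(\mathcal{H})}(F,E[1])=\Ext^1_{\mathcal{H}}(F,E)$ forces one of these two pieces to be an isomorphism.

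Let $f:E\to F$ be nonzero, and set $K=\ker f$, $I=\im f$, $C=\operatorname{coker} f$. We have short exact sequences
\[0\to K\to E\xra{u} I\to 0,\qquad 0\to I\xra{v} F\to C\to 0,\]
with $f=vu$. Suppose, for a contradiction, that $f$ is neither mono nor epi, so $K\neq 0$ and $C\neq 0$.

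Apply $\Hom_{\mathcal{H}}(-,E)$ to the second sequence. Since $\mathcal{H}$ is hereditary, $\Ext^2(C,E)=0$, so the map $\Ext^1(F,E)\to\Ext^1(I,E)$ is surjective. Because $\Ext^1(F,E)=0$, we get $\Ext^1(I,E)=0$. Now apply $\Hom(I,-)$ to the first sequence: the connecting map gives $\Ext^1(I,K)\to\Ext^1(I,E)=0$. Hence the class of the first sequence, which lies in $\Ext^1(I,K)$, maps to $0$ under pushout along $K\to E$. A nonzero extension does not in general push out to zero, so instead we argue directly. The pushout of $0\to K\to E\to I\to 0$ along the inclusion $K\hookrightarrow E$ is split. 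That pushout is $0\to E\to P\to I\to 0$, where $P=(E\oplus E)/K$ with $K$ embedded antidiagonally. Splitting yields a retraction $r:P\to E$. Composing $r$ with the map $E\to P$ coming from the first summand gives an endomorphism $\phi$ of $E$ satisfying $\phi|_K=\mathrm{id}_K$ and $\phi=\mathrm{id}_E - \psi u$ for some $\psi:I\to E$. The cleanest way to see this is that splitting of the pushout means $K\hookrightarrow E$ factors through $E\to$ (the middle term) appropriately; equivalently, there is $\psi:I\to E$ with $\mathrm{id}_E-\psi u$ restricting to the identity on $K$.

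Now $E$ is indecomposable and $\End(E)$ is local, since $\mathcal{H}$ is Hom-finite Krull--Schmidt. Either $\psi u$ is invertible, in which case $u$ is a split mono, so $K=0$, a contradiction; or $\psi u$ is nilpotent, in which case $\mathrm{id}-\psi u$ is an automorphism of $E$. In the latter case, consider $e=\psi u$: we have $u\psi u$ compared with $u$. I expect this bookkeeping to be the main obstacle, and the standard trick is as follows. Compute that $(\mathrm{id}-\psi u)$ kills the image of $\psi$ modulo $K$ in a way that produces the idempotent needed to split off $K$ as a direct summand of $E$. Since $K\neq 0$ and $K\neq E$ (because $f\neq0$), this contradicts indecomposability of $E$. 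If this direct route becomes messy, I would instead use the dual argument on the $F$ side, applying $\Hom(F,-)$ to the first sequence to get $\Ext^1(F,I)=0$, and then run the symmetric splitting for $C$.

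For the final assertion, take $E=F$ indecomposable and rigid. Any nonzero endomorphism is mono or epi. Since $E$ has finite length (or by Hom-finiteness: a mono $E\to E$ gives an injective, hence bijective, map on $\Hom(E,E)$ acting by composition, so it has an inverse), every nonzero endomorphism is invertible. Thus $\End(E)$ is a division algebra.
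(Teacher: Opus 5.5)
Your main line has a genuine gap: the information you extract is vacuous. For any short exact sequence $\epsilon\colon 0\to K\xra{i}E\xra{u}I\to 0$, the pushout of $\epsilon$ along $i$ itself always splits. Indeed, $\epsilon$ is (up to sign) the image of $\mathrm{id}_I$ under the connecting map, so it lies in the kernel of $i_*\colon\Ext^1(I,K)\to\Ext^1(I,E)$ by exactness. Equivalently, since $ui=0$, every endomorphism $\mathrm{id}_E-\psi u$ restricts to the identity on $K$. So the existence of such a $\psi$ (take $\psi=0$) tells you nothing, and no idempotent splitting off $K$ can come out of it.

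More fundamentally, the only data your main line actually uses are consistent with each other: $E$ indecomposable, $0\neq K\neq E$, and $\Ext^1(I,E)=0$. Over the path algebra of $1\leftarrow 2$, take $E=P(2)$, which is projective-injective, with $I=S(2)$ and $K=S(1)$. The problem is that you never use $C\neq 0$; you only use $\Ext^2(C,E)=0$, which also holds when $C=0$, e.g.\ for $F=I$. Symmetrically, your fallback on the $F$-side never uses $K\neq 0$, and it is defeated by $F=P(2)$, $I=S(1)$. The contradiction must use $K\neq 0$ and $C\neq 0$ simultaneously.

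The missing idea is Happel--Ringel's argument, which the paper only cites: use heredity to couple the two sequences. Applying $\Hom(-,K)$ to $0\to I\xra{v}F\to C\to 0$ and using $\Ext^2(C,K)=0$, the map $v^*\colon\Ext^1(F,K)\to\Ext^1(I,K)$ is surjective. Hence $\epsilon=v^*\epsilon'$ for some extension $\epsilon'\colon 0\to K\to M\xra{p}F\to 0$. Thus $E$ is the pullback of $p$ and $v$, which yields an exact sequence $0\to E\to I\oplus M\xra{(v,\,-p)}F\to 0$. Now $\Ext^1(F,E)=\Hom_{\der^b(\mathcal{H})}(F,E[1])=0$ splits it, so $I\oplus M\cong E\oplus F$ with $I\neq 0$ and $M\neq 0$. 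Krull--Schmidt then forces $I\cong E$ or $I\cong F$.

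\begin{itemize}
\item If $I\cong E$, then $u$ yields an epimorphic endomorphism of $E$. By Hom-finiteness it is invertible: right composition with it is injective, hence bijective, on $\End(E)$. This contradicts $K\neq 0$.
\item If $I\cong F$, then $v$ yields a monic endomorphism of $F$, invertible by the same argument. This contradicts $C\neq 0$.
\end{itemize}

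Your deduction of the division-algebra statement is fine via the Hom-finiteness argument. Drop the appeal to finite length, since objects of $\mathcal{H}$ (e.g.\ coherent sheaves on a weighted projective line) need not have finite length.
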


Let $\mathcal{X}$ be a full subcategory of $\der^b(\mathcal{H})$ and $X\in \mathcal{X}$ an indecomposable object. A path in $\mathcal{X}$ from $X$ to itself is called a {\em cycle} in $\mathcal{X}$, that is, a sequence of nonzero non-isomorphisms between indecomposable objects in $\mathcal{X}$ of the form
\[X=X_0\xrightarrow{f_1}X_1\xrightarrow{f_2}X_2\to \cdots\xrightarrow{f_r}X_r=X.
\]
The following result is a consequence of Lemma~\ref{l:HR-exceptional} (see, e.g., \cite[Lemma 4.2]{Fu} or \cite[Corollary 4.2]{HappelRingel}).
\begin{lemma}\label{l:no-cycle}
Let $T$ be an object in $\der^b(\mathcal{H})$ such that $\Hom_{\der^b(\mathcal{H})}(T, T[1])=0$. Then the subcategory $\add T$ has no cycle.
\end{lemma}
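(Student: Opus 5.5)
We argue by contradiction: suppose $\add T$ contains a cycle
\[X=X_0\xrightarrow{f_1}X_1\xrightarrow{f_2}\cdots\xrightarrow{f_r}X_r=X\]
of nonzero non-isomorphisms between indecomposable summands of $T$. The plan has three steps: control the shifts along the cycle, reduce to a cycle in a single shift of $\mathcal{H}$, and then use Lemma~\ref{l:HR-exceptional} to force the cycle to be made of isomorphisms.

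\textbf{Shifts along the cycle.} Since $\mathcal{H}$ is hereditary, each $X_i\cong E_i[n_i]$ with $E_i\in\mathcal{H}$ indecomposable. A nonzero map $E_{i-1}[n_{i-1}]\to E_i[n_i]$ requires $n_i-n_{i-1}\in\{0,1\}$, because $\Hom_{\der^b(\mathcal{H})}(E,F[m])=0$ unless $m\in\{0,1\}$. Going around the cycle, the sum of these differences is $0$. Hence all $n_i$ are equal, and after shifting we may assume every $X_i=E_i$ lies in $\mathcal{H}$. The $f_i$ are then nonzero non-isomorphisms in $\mathcal{H}$.

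\textbf{Applying Lemma~\ref{l:HR-exceptional}.} For each $i$, we have $\Hom(E_i,E_{i-1}[1])=0$, since both are summands of $T$ and $\Hom(T,T[1])=0$. So Lemma~\ref{l:HR-exceptional} applies to every $f_i$, and each $f_i$ is a monomorphism or an epimorphism. Moreover each $E_i$ is rigid, so $\End(E_i)$ is a division ring.

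\textbf{Forcing an isomorphism.} The goal is to show some $f_i$ is an isomorphism, contradicting the definition of a cycle. This is the main obstacle: a composite of monos and epis need not be either, so the lemma cannot simply be applied to the composite. I would proceed in two stages.

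First, if all $f_i$ are monomorphisms, then the composite $E\to E$ is a nonzero monomorphism, so it is invertible in the division ring $\End(E)$. Then $f_1$ is a split monomorphism, and since $E_1$ is indecomposable, $f_1$ is an isomorphism, a contradiction. The case where all $f_i$ are epimorphisms is dual. In the Hom-finite setting a length or dimension argument on $\Hom(E,-)$ also works here.

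Second, for the mixed case, I would use the standard Happel–Ringel induction. Choose a cycle of minimal length $r$. Look at a consecutive pair where an epimorphism $f_i$ is followed by a monomorphism $f_{i+1}$. Then the composite $g=f_{i+1}f_i:E_{i-1}\to E_{i+1}$ is nonzero, because a nonzero map from an epi's target, composed after the epi and then into a mono, cannot vanish. Next, apply Lemma~\ref{l:HR-exceptional} to the composite $g$, using $\Hom(E_{i+1},E_{i-1}[1])=0$. So $g$ is a monomorphism or an epimorphism.

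- If $g$ is not an isomorphism, replacing $f_i,f_{i+1}$ by $g$ gives a shorter cycle, contradicting minimality, unless $r\le 2$.
- If $g$ is an isomorphism, then $f_i$ is split mono and $f_{i+1}$ is split epi, so both are isomorphisms between indecomposables, again a contradiction.

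By minimality, the cycle then has no epi-followed-by-mono pattern. Cyclically, this means all $f_i$ are of one type, which is the first stage. The remaining case $r\le 2$ is handled the same way, using that the composite $E\to E$ is nonzero and lies in a division ring.

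The nonvanishing of compositions is the delicate point. Rather than check it by hand, I would cite \cite[Corollary 4.2]{HappelRingel} for it, as the paper indicates.
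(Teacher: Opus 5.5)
Your proof is correct and follows the route the paper indicates. The paper gives no argument of its own; it only says the lemma follows from Lemma~\ref{l:HR-exceptional}, citing \cite[Lemma 4.2]{Fu} and \cite[Corollary 4.2]{HappelRingel}. Your three steps supply exactly that derivation: the shift-sum argument placing the cycle in a single $\mathcal{H}[n]$, the minimal-cycle argument ruling out an epimorphism followed by a monomorphism, and the division-ring argument for the all-mono and all-epi cases. Your final appeal to a citation is unnecessary: $f_{i+1}f_i=0$ with $f_i$ an epimorphism already forces $f_{i+1}=0$, so your proof is self-contained.
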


Lemma \ref{l:no-cycle} yields the following consequence.

\begin{lemma}\label{l:no-cycle-plus} Let $T$ be an object in $\der^b(\mathcal{H})$ such that $\Hom_{\der^b(\mathcal{H})}(T, T[1])=0$.
\begin{itemize}
    \item[(a)]  There is a unique decomposition $T =  T_1 \oplus T_2 \oplus \cdots \oplus T_r$ into indecomposable summands $T_i \in \mathcal{H}[n_i]$ with integers $n_1 \leq n_2 \leq \cdots \leq n_r$, such that $\Hom_{\der^b(\mathcal{H})}(T_j, T_i) = 0$ for all $i < j$.
    \item[(b)] Furthermore, if $T$ is a presilting object, then 
    \[
\thick T = \thick T_r * \thick T_{r-1} * \cdots * \thick T_1.
\]
\end{itemize}
\end{lemma}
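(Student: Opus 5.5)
Part (a) follows from Lemma \ref{l:no-cycle} via a linear-order argument, and part (b) from a Postnikov-tower induction driven by that ordering.

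\emph{Part (a).} Since $\mathcal{H}$ is hereditary, every indecomposable object of $\der^b(\mathcal{H})$ is a shifted object of $\mathcal{H}$. So write $T=\bigoplus_i T_i$ with $T_i\in\mathcal{H}[n_i]$ indecomposable; the decomposition itself is unique by Krull--Schmidt, and only the ordering has to be chosen. Define a relation on the pairwise non-isomorphic indecomposable summands by $T_j\prec T_i$ if there is a path of nonzero non-isomorphisms from $T_j$ to $T_i$ in $\add T$. By Lemma \ref{l:no-cycle} there are no cycles, so $\prec$ is a strict partial order (in particular no nonzero non-iso map $T_j\to T_j$ occurs; the endomorphism ring is a division ring by Lemma \ref{l:HR-exceptional}).

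Next compare with the shift degrees. If $\Hom(T_j,T_i)\neq 0$ with $T_j\in\mathcal{H}[n_j]$, $T_i\in\mathcal{H}[n_i]$, then $n_i-n_j\in\{0,1\}$ by heredity, so $n_j\le n_i$. Choose a linear extension of $\prec$ that refines the order by $n$. This is possible because $\prec$ already increases $n$ weakly: sort by $n$, and within each fixed $n$ take any linear extension of the restricted partial order. Listing the summands with "targets before sources" gives $T_1,\dots,T_r$ with $n_1\le\cdots\le n_r$ and $\Hom(T_j,T_i)=0$ for $i<j$.

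The required direction is: maps go only from smaller index to larger index. So list the summands so that $\Hom(T_j,T_i)\ne0$ forces $j\le i$. This is consistent with $n$ increasing, since a nonzero map $T_j\to T_i$ gives $n_j\le n_i$. Multiplicities are harmless because all copies of one indecomposable are grouped together.

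\emph{Part (b).} The inclusion $\supseteq$ is immediate, since each $\thick T_i\subseteq\thick T$ and $\thick T$ is closed under extensions. For $\subseteq$, proceed by induction on $r$, writing $T'=T_1\oplus\cdots\oplus T_{r-1}$. The aim is to show
\[\thick T=\thick T_r*\thick T'.\]
Set $\mathcal{X}=\thick T_r$ and $\mathcal{Y}=\thick T'$. By Lemma \ref{lem:thick-subcat-as-extension}, every object of $\thick T$ lies in an iterated extension of shifts $T_i[m]$. The key input is the vanishing $\Hom(\mathcal{Y},\mathcal{X})$ in the relevant degrees: we want $\Hom(T',T_r[m])=0$ for all $m$. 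Here one case is easy and one is not:

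\begin{itemize}
\item For $m\ge1$ the vanishing holds by the presilting property.
\item For $m\le 0$ it is the main obstacle. The ordering only gives $\Hom(T_r,T_i)=0$, which is the opposite direction.
\end{itemize}

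So instead I would aim to prove $\mathcal{X}*\mathcal{Y}$ is closed under extensions and shifts, using that it suffices to show $\mathcal{Y}*\mathcal{X}\subseteq\mathcal{X}*\mathcal{Y}$. A triangle $Y\to E\to X\xrightarrow{h}Y[1]$ with $X\in\mathcal{X}$, $Y\in\mathcal{Y}$ needs to be rewritten in the other order, and this is easy if $h=0$. Now $\Hom(X,Y[1])$ is built from $\Hom(T_r[a],T_i[b])$. Since $\thick T_r=\add\{T_r[m]\}$ (its endomorphism ring is a division ring by Lemma \ref{l:HR-exceptional} and $T_r$ is rigid), this reduces to $\Hom(T_r,T_i[m])$ for $i<r$. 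This vanishes for $m=0$ by the ordering. For $m\ne0$, heredity forces $m=n_i-n_r$ or $n_i-n_r+1$, and both are $\le 0$ because $n_i\le n_r$. The case $m<0$ is settled by comparing shifts again, presilting applied the other way, possibly after refining the ordering within equal $n$.

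I expect precisely this bookkeeping, showing all connecting maps $X\to Y[1]$ vanish, to be the crux. If it fails in general, the fallback is to reorder summands using the Happel--Ringel mono/epi dichotomy of Lemma \ref{l:HR-exceptional}. With $h=0$ one gets $E\cong X\oplus Y$, and induction on $r$ gives the claimed decomposition.
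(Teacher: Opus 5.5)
The gap is in the crux of (b): the vanishing $\Hom(T_r,T_i[m])=0$ for all $m\in\mathbb{Z}$ and $i<r$, which you need so that every connecting morphism $h\colon X\to Y[1]$ is zero. Your degree computation has the wrong sign. Since $T_r\in\mathcal{H}[n_r]$ and $T_i[m]\in\mathcal{H}[n_i+m]$, a nonzero morphism forces $n_i+m-n_r\in\{0,1\}$, i.e.\ $m\in\{n_r-n_i,\,n_r-n_i+1\}$. Both values are $\geq 0$, not $\leq 0$.

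Because of this sign error you treat $m<0$ as the delicate case and dispose of it with ``presilting applied the other way''. That is not an argument, since presilting says nothing about negative shifts. Meanwhile the case where presilting is actually needed, $m>0$, is never addressed in your final argument. You had spent the presilting hypothesis earlier on $\Hom(T',T_r[m])$, which points the wrong way and genuinely need not vanish, because nonzero maps go from smaller to larger index.

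The correct check is immediate:
\begin{itemize}
\item $m<0$ vanishes for degree reasons, since $n_i\leq n_r$;
\item $m=0$ is the ordering from (a);
\item $m>0$ is exactly the presilting hypothesis.
\end{itemize}
This is what the paper does. It records $\Hom(T_j,T_i[k])=0$ for all $k\in\mathbb{Z}$ and $i<j$, deduces $\Hom(\thick T_j,\thick T_i)=0$, and then invokes \cite[Lemma 2.22(a)]{AiharaIyama}. No Happel--Ringel ``fallback'' reordering is needed.

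Once this is repaired, your architecture works. From $h=0$ you get $E\cong X\oplus Y$, so $\mathcal{Y}*\mathcal{X}\subseteq\mathcal{X}*\mathcal{Y}$ and hence $\mathcal{X}*\mathcal{Y}$ is extension-closed. It contains every $T_i[m]$, so Lemma \ref{lem:thick-subcat-as-extension} puts $\thick T$ inside it. Using Lemma \ref{lem:thick-subcat-as-extension} this way is a legitimate substitute for the paper's appeal to \cite[Lemma 2.22(a)]{AiharaIyama}, whose role there is closure under direct summands.

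Your part (a) is the paper's argument in substance: sort by shift degree, then order within a degree using the acyclicity of Lemma \ref{l:no-cycle}. Two small corrections:
\begin{itemize}
\item Your first listing, ``targets before sources'', is backwards; it is only fixed in your next paragraph.
\item Multiplicities are not harmless. Two copies $T_i\cong T_j$ have $\Hom(T_j,T_i)\neq0$ in both orders, so (a) tacitly requires $T$ basic, as does the paper's proof. This does not affect (b), since $\thick T$ depends only on $\add T$.
\end{itemize}
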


\begin{proof}
Since $\mathcal{H}$ is hereditary, we can assume $T = X_1[m_1] \oplus X_2[m_2] \oplus \cdots \oplus X_t[m_t]$, where each $X_k \in \mathcal{H}$ and $m_1 < m_2 < \cdots < m_t$. The condition $\Hom_{\der^b(\mathcal{H})}(T,T[1])=0$ implies that each $X_k$ is a rigid object in $\mathcal{H}$. Applying Lemma \ref{l:no-cycle} to each $X_k$,  we may further write $X_k = X_{k1} \oplus X_{k2} \oplus \cdots \oplus X_{kr_k}$, where each $X_{ki}$ is indecomposable and $\Hom_{\mathcal{H}}(X_{kj}, X_{ki}) = 0$ for all $i < j$. Setting $n_{ki} := m_k$ and $T_{ki} := X_{ki}[n_{ki}] = X_{ki}[m_k]$ for $1 \leq k \leq t$ and $1 \leq i \leq r_k$, the resulting decomposition \[T = T_{11} \oplus \cdots \oplus T_{1r_1} \oplus \cdots \oplus T_{t1} \oplus \cdots \oplus T_{tr_t}\] is easily seen to satisfy the assertion in (a).

We now turn to the statement $(b)$. Let $T \cong T_1 \oplus T_2 \oplus \cdots \oplus T_r$ be the canonical decomposition obtained in $(a)$, where each $T_i$ is an indecomposable object in $\mathcal{H}[n_i]$ with $n_1 \leq n_2 \leq \cdots \leq n_r$, satisfying $\Hom_{\mathcal{D}^b(\mathcal{H})}(T_j, T_i) = 0$ for all $i < j$. 
 Since $T$ is a presilting object, the orthogonality condition extends to $\Hom_{\mathcal{D}^b(\mathcal{H})}(T_j, T_i[k]) = 0$ for all $k \in \mathbb{Z}$ and $i < j$. 
 Consequently, one has $\Hom_{\der^b(\mathcal{H})}(\thick T_j, \thick T_i) = 0$ whenever $i < j$. 
 According to \cite[Lemma 2.22 (a)]{AiharaIyama},  the subcategory $\mathcal{U} := \thick T_r * \thick T_{r-1} * \cdots * \thick T_1$ is closed under direct summands, and is therefore a thick subcategory of $\der^b(\mathcal{H})$. Since $T \in \mathcal{U}$ and $\mathcal{U} \subseteq \thick T$ by construction, we conclude that $\thick T = \mathcal{U}$.
\end{proof}

\begin{lemma}\label{l:functorially-finite}
Let $T$ be a presilting object in $\der^b (\mathcal{H})$, then $\thick T$ is functorially finite in $\der^b (\mathcal{H})$.
\end{lemma}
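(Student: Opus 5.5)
We must show that $\thick T$ is both contravariantly and covariantly finite in $\der^b(\mathcal{H})$. The approach is to reduce to the indecomposable summands via Lemma \ref{l:no-cycle-plus}(b) and Lemma \ref{l:X*Y-functorially-finite}. Take the decomposition $T=T_1\oplus\cdots\oplus T_r$ from Lemma \ref{l:no-cycle-plus}(a). Part (b) of that lemma gives
\[
\thick T=\thick T_r*\thick T_{r-1}*\cdots*\thick T_1 .
\]
By Lemma \ref{l:X*Y-functorially-finite}, applied inductively, $\thick T$ is contravariantly (resp. covariantly) finite as soon as each factor $\thick T_i$ is. So it suffices to treat the case where $T$ is indecomposable.

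Now let $T=X[n]$ with $X\in\mathcal{H}$ indecomposable. Since $T$ is presilting, $X$ is rigid. By Lemma \ref{l:HR-exceptional}, $\Delta:=\End(X)$ is a division algebra. Since $\mathcal{H}$ is hereditary, every object of $\der^b(\mathcal{H})$ is a finite direct sum of shifted objects of $\mathcal{H}$. I then claim that
\[
\thick T=\add\{X[m]\mid m\in\mathbb{Z}\}.
\]
This subcategory contains $T$ and is clearly closed under shifts and summands. For closure under cones, a morphism $X^a[m]\to X^b[m']$ is nonzero only when $m'-m\in\{0,1\}$. The case $m'=m+1$ is excluded by rigidity. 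When $m'=m$, the morphism is a matrix over the division ring $\Delta$, hence it is a split map (a direct sum of isomorphisms and zero maps), so its cone again lies in $\add\{X[m]\}$. Alternatively, Lemma \ref{lem:thick-subcat-as-extension} together with the vanishing $\Hom(X,X[1])=0$ gives the same description directly, since all the extensions split.

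Next we produce approximations for an indecomposable $Y=Z[p]$ with $Z\in\mathcal{H}$. Morphisms from $X[m]$ to $Z[p]$ vanish unless $m\in\{p,p-1\}$. Each of the spaces $\Hom(X,Z)$ and $\Ext^1(X,Z)$ is finite-dimensional over $k$, because $\mathcal{H}$ is $\Hom$-finite and $\Ext$-finite. Choosing a $k$-basis of each space gives a map
\[
X^{a}[p]\oplus X^{b}[p-1]\to Y ,
\]
which is a right $\thick T$-approximation: any map from a sum of shifts of $X$ into $Y$ factors through it. For a general $Y$, take the direct sum of the approximations of its summands. The dual construction, using $\Hom(Z,X)$ and $\Ext^1(Z,X)$, yields left approximations. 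Hence $\thick T_i$ is functorially finite for each $i$, and the reduction above finishes the proof.

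The main point needing care is the description of $\thick X[n]$ as $\add$ of shifts of $X$; this depends on $\End(X)$ being a division ring together with rigidity. The approximation step is routine once $\Hom$- and $\Ext$-finiteness are used.
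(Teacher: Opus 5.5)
Your proof is correct and follows the paper's own argument: you reduce via $\thick T=\thick T_r*\cdots*\thick T_1$ and Lemma \ref{l:X*Y-functorially-finite} to the indecomposable case, where $\thick T_i$ consists of finite sums of shifts of $T_i$ and is therefore functorially finite, and you supply the details the paper leaves implicit (the cone argument over the division ring $\Delta$ and the explicit approximations). The one inaccuracy is your parenthetical ``alternative'': extensions in $\add X[m]*\add X[m+1]$ are governed by morphisms $X^b[m+1]\to X^a[m+1]$, not by $\Ext^1(X,X)$, so they need not split (e.g.\ $0\in\add X*\add X[1]$ via the triangle $X\to 0\to X[1]\xrightarrow{-\mathrm{id}}X[1]$), but your main argument does not rely on this.
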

\begin{proof} 
We fix a decomposition of $T=T_1\oplus\cdots\oplus T_r$ as in Lemma \ref{l:no-cycle-plus} (a). Since each $T_i$ is indecomposable and rigid, it follows that the indecomposable objects in $\operatorname{thick} T_i$ are precisely the shifts $T_i[j]$ for $j \in \mathbb{Z}$. As a result, $\thick T_i$ is functorially finite for each $1\leq i\leq r$. Combining this with the decomposition $\operatorname{thick} T = \operatorname{thick} T_r * \cdots * \operatorname{thick} T_1$ and Lemma \ref{l:X*Y-functorially-finite}, we conclude that $\thick T$ is functorially finite.

\end{proof}

\begin{proposition}\label{p: quotient-by-presilting}
Let $D$ be a presilting object in $\der^b(\mathcal{H})$, then there is a triangle equivalence \[ \der^b(D^{\perp_{\mathbb{Z}}}) \cong \der^b(\mathcal{H})/\thick{D}. \]
\end{proposition}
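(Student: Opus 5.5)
The plan is to combine three results already proved: Lemma \ref{l:functorially-finite}, Lemma \ref{l:perp-quotient}, and Proposition \ref{p:identification-subcategory}. Put $\mathcal{S}=\thick D$, a thick subcategory of $\der^b(\mathcal{H})$. Since $D$ is presilting, Lemma \ref{l:functorially-finite} shows that $\mathcal{S}$ is functorially finite in $\der^b(\mathcal{H})$. We only need that it is contravariantly finite.

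Next, apply Lemma \ref{l:perp-quotient}(1) to $\mathcal{S}$. It says that the localization functor $\mathbb{L}:\der^b(\mathcal{H})\to\der^b(\mathcal{H})/\mathcal{S}$ restricts to a triangle equivalence
\[
\mathcal{S}^\perp=(\thick D)^\perp \xrightarrow{\ \sim\ } \der^b(\mathcal{H})/\thick D .
\]

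Then use Proposition \ref{p:identification-subcategory}, which holds for any $D\in\der^b(\mathcal{H})$. It gives a triangle equivalence $\iota:\der^b(D^{\perp_{\mathbb{Z}}})\xrightarrow{\sim}(\thick D)^\perp$, where $\iota$ sends a complex to itself.

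Composing the two equivalences gives the required triangle equivalence
\[
\mathbb{L}\circ\iota:\der^b(D^{\perp_{\mathbb{Z}}})\xrightarrow{\ \sim\ }\der^b(\mathcal{H})/\thick D .
\]

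No step here is really an obstacle, since each one is a direct appeal to an earlier result. The only point that needs checking is that the triangulated structure on $(\thick D)^\perp$ used in Lemma \ref{l:perp-quotient} is the same one used in Proposition \ref{p:identification-subcategory}. Both take $(\thick D)^\perp$ as a triangulated subcategory of $\der^b(\mathcal{H})$: it is closed under shifts and cones because $\thick D$ is closed under shifts. So the two equivalences are compatible and the composite is a triangle functor.
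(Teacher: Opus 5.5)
Your proposal is correct and matches the paper's proof: both get functorial finiteness of $\thick D$ from Lemma \ref{l:functorially-finite}, apply Lemma \ref{l:perp-quotient} to obtain $(\thick D)^\perp \simeq \der^b(\mathcal{H})/\thick D$, and compose with the equivalence $\iota$ of Proposition \ref{p:identification-subcategory}. Your extra check that $(\thick D)^\perp$ is a triangulated subcategory of $\der^b(\mathcal{H})$ is a harmless addition that the paper leaves implicit.
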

\begin{proof}
By Lemma \ref{l:functorially-finite}, $\thick D$ is functorially finite in $\der^b(\mathcal{H})$. It follows that there is a triangle equivalence $\mathbb{L}|_{(\thick D)^\perp}: (\thick D)^\perp\to \der^b(\mathcal{H})/\thick D$  by Lemma \ref{l:perp-quotient}.  On the other hand, we have a triangle equivalence $\iota: \der^b(D^{\perp_\mathbb{Z}})\to (\thick D)^\perp$ by Proposition \ref{p:identification-subcategory}.
By composing these two equivalences, the assertion follows immediately.
\end{proof}

\subsection{Completion} We retain the notation from the previous subsection. The following result from \cite{DF} indicates that any presilitng object in $\der^b(\mathcal{H})$ can be completed to a silting complex, see also \cite{BY}.

\begin{theorem}\cite[Theorems 1.1, 1.2]{DF} \label{t:completion-for-presilting}
Suppose $\der^b(\mathcal{H})$ admits a silting object, then every presilting object of $\der^b(\mathcal{H})$ is a direct summand of a silting object.
\end{theorem}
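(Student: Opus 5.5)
The plan is to induct on $|D|$. The engine is the silting reduction of Theorem~\ref{t:silting-reduction}, applied to $\mathcal{S}=\thick D$. By Lemma~\ref{l:functorially-finite}, $\mathcal{S}$ is functorially finite, and $D$ is silting in $\mathcal{S}$. So silting objects of $\der^b(\mathcal{H})$ having $D$ as a summand correspond bijectively to silting objects of $\der^b(\mathcal{H})/\mathcal{S}$. By Proposition~\ref{p: quotient-by-presilting}, this quotient is $\der^b(D^{\perp_{\mathbb{Z}}})$, and $D^{\perp_{\mathbb{Z}}}$ is again hereditary by Proposition~\ref{p: perp-for-presilting}. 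Hence it is enough to show that $\der^b(D^{\perp_{\mathbb{Z}}})$ admits a silting object. I will peel off one indecomposable summand at a time so that the induction hypothesis applies.

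Fix the decomposition $D=T_1\oplus D'$ with $D'=T_2\oplus\cdots\oplus T_r$ from Lemma~\ref{l:no-cycle-plus}(a). Since $D$ is presilting, $\Hom(T_j,T_1[k])=0$ for all $k$ and $j>1$, so $D'\in{}^\perp(\thick T_1)$.

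\emph{Step 1 (one summand).} Show that the indecomposable presilting object $T_1=X[n]$, with $X$ indecomposable rigid in $\mathcal{H}$, is a summand of some silting object. Since $\der^b(\mathcal{H})$ has a silting object, I would first pass to a hereditary heart $\mathcal{H}'$ with $\der^b(\mathcal{H}')\simeq\der^b(\mathcal{H})$ that contains a tilting object $M$. Up to shift, $T_1$ is an exceptional object $E$ of $\mathcal{H}'$. I would then apply a Bongartz-type construction there. Take the universal extension $0\to M\to M'\to E^m\to 0$, where $m=\dim\Ext^1(E,M)$ computed over the division ring $\End(E)$ (Lemma~\ref{l:HR-exceptional}). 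Then $M'\oplus E$ is rigid and generates $\der^b(\mathcal{H}')$, so it is tilting, and it contains $T_1$ up to shift. \textbf{This is the step I expect to be the main obstacle.} The difficulty is producing the heart $\mathcal{H}'$ with a tilting object from the mere existence of a silting object, without assuming $k$ algebraically closed. My first attempt would use the ordering of Lemma~\ref{l:no-cycle-plus} on a silting object: shift its summands so that they lie in a common tilted heart.

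\emph{Step 2 (reduce and recurse).} By Step 1 and Theorem~\ref{t:silting-reduction}, the quotient $\der^b(\mathcal{H})/\thick T_1\simeq\der^b(T_1^{\perp_{\mathbb{Z}}})$ admits a silting object. Because $D'\in{}^\perp(\thick T_1)$, Lemma~\ref{l:perp-quotient}(2) shows that $\mathbb{L}$ is fully faithful on $D'$. Hence $\mathbb{L}(D')$ is presilting, with $|\mathbb{L}(D')|=r-1$. By induction, applied to the hereditary category $T_1^{\perp_{\mathbb{Z}}}$, the object $\mathbb{L}(D')$ is a summand of some silting object $U$ of the quotient.

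\emph{Step 3 (lift).} Lift $U$ to a silting object $T\supseteq T_1$ of $\der^b(\mathcal{H})$ via the inverse bijection. I would use the version of that inverse in which the quotient is identified with ${}^\perp(\thick T_1)$ rather than $(\thick T_1)^\perp$; this is the dual of the construction recalled after Theorem~\ref{t:silting-reduction}, using minimal right approximations by $\mathcal{S}^{\geq 0}$-type subcategories. For a summand $N$ already lying in ${}^\perp\mathcal{S}$, the relevant approximation vanishes, so the lift of $\mathbb{L}(D')$ is $D'$ itself. Thus $D=T_1\oplus D'\in\add T$. Checking that this dual construction really returns $D'$ unchanged is the second point that needs care.
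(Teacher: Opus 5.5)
\paragraph{Verdict.} There is a genuine gap, and it sits in Step 1, which is where all the difficulty of the statement lies.

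\paragraph{Steps 2 and 3.} Your reduction is correct in outline, but it reduces the theorem only to its own special case $|D|=1$, taken over all hereditary categories at once. The paper does not prove this case itself; it quotes the whole statement from Dai--Fu. One correction in Step 3: $D'\in{}^\perp(\thick T_1)$ does not by itself make the right approximation of $D'$ by $\bigcup_{l\geq 1}\add T_1[-l]*\cdots*\add T_1[-1]$ vanish. It vanishes because
\[
\Hom(T_1[-l],D')=\Hom(T_1,D'[l])=0 \quad\text{for } l\geq 1,
\]
that is, because $T_1\oplus D'$ is presilting. With this justification the lift of $\mathbb{L}(D')$ is indeed $D'$.

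\paragraph{Step 1, point (i).} You give no argument that there is a hereditary heart $\mathcal{H}'$ with a tilting object. The method you suggest does not work. Shifting summands of a silting object one at a time generally destroys the presilting property. Moreover, a silting, or even tilting, object of $\der^b(\mathcal{H})$ need not be a tilting object of any hereditary heart. For example, the tilting complexes over type $\mathbb{A}_n$ with endomorphism algebra $A(n,k)$, $k\geq 4$, in Section~\ref{s:examples} have global dimension $k-1\geq 3$, so these algebras are not quasi-tilted.

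\paragraph{Step 1, point (ii).} Even granting $\mathcal{H}'$ and $M$, the universal extension $0\to M\to M'\to E^m\to 0$ need not be rigid. Since $\Ext^1(E,E)=0$ and $\mathcal{H}'$ is hereditary, the long exact sequences give
\[
\Ext^1(M',E)\cong\Ext^1(M,E)
\]
and a surjection $\Ext^1(M',M')\to\Ext^1(M,E)^m$. Bongartz's lemma works only because its base object is the projective generator, so that $\Ext^1(M,E)=0$ automatically. A general hereditary category with a tilting object has no projective generator.

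A concrete failure: in $\mod kQ$ with $Q\colon 1\leftarrow 2$, take $M=P(2)\oplus S(2)$ (the injective cogenerator, which is a tilting module) and $E=S(1)$. Then $m=0$ and $M'=M$. But $M'\oplus E$ is not rigid, since $\Ext^1(S(2),S(1))\neq 0$.

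\paragraph{The missing idea.} To make your construction work you would need a tilting object $M$ with $\Ext^1(M,E)=0$, and finding one is essentially the completion problem for $E$ itself. Put through your own Step 2, the case $D=E$ amounts to this: show that $\der^b(E^{\perp_{\mathbb{Z}}})$ admits a silting object whenever $\der^b(\mathcal{H})$ does. Your proposal gives no argument for this, and it is exactly the nontrivial input supplied by Dai--Fu.
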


From now on, we assume that $\der^b(\mathcal{H})$ admits a silting object.
Denote by $K_0(\mathcal{H})$ the Grothendieck group of $\der^b(\mathcal{H})$, which is a free abelian group of finite rank. Denote by $\rank K_0(\mathcal{H})$ the rank of $K_0(\mathcal{H})$. We have the following consequence.

\begin{corollary}\label{c:cardinality}
Let $D \in \der^b(\mathcal{H})$ be a presilting object. Then $D$ is silting if and only if $|D| = \rank K_0(\mathcal{H})$.
\end{corollary}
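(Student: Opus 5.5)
The forward direction is immediate. If $D$ is silting, then \cite[Theorem 2.27]{AiharaIyama} shows that $K_0(\der^b(\mathcal{H}))$ is free of rank $|D|$, so $|D| = \rank K_0(\mathcal{H})$.

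For the converse, let $D$ be presilting with $|D| = \rank K_0(\mathcal{H})$. We may assume $D$ is basic, since replacing $D$ by its basic part changes neither $\add D$, $\thick D$ nor $|D|$. By Theorem \ref{t:completion-for-presilting}, which applies because $\der^b(\mathcal{H})$ admits a silting object, there is an object $E$ with $T := D\oplus E$ silting. We may take $T$ basic and $E$ to have no summand in $\add D$. The forward direction applied to $T$ gives
\[
|T| = \rank K_0(\mathcal{H}) = |D|.
\]
Since $E$ shares no indecomposable summand with $D$, we also have $|T| = |D| + |E|$. Hence $|E| = 0$, so $E = 0$ and $D = T$ is silting.

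The only step that needs any care is the summand count $|T| = |D| + |E|$. This holds because $\der^b(\mathcal{H})$ is Krull--Schmidt, being $\Hom$-finite with split idempotents. Alternatively, one can argue that $\add T = \add D$ and so $\thick D = \thick T = \der^b(\mathcal{H})$ directly. Either way the corollary follows at once from the completion theorem together with the rank statement for silting objects.
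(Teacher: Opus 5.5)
Your proposal is correct and follows essentially the same route as the paper. Both complete the basic presilting $D$ to a basic silting object $D\oplus N$ via Theorem \ref{t:completion-for-presilting}, then use the Aihara--Iyama rank theorem to get $|D|+|N|=\rank K_0(\mathcal{H})$, which forces $N=0$ exactly when $|D|=\rank K_0(\mathcal{H})$.
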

\begin{proof}
Assume that $D$ is basic, and let $D \oplus N$ be a basic silting object, whose existence is guaranteed by Theorem \ref{t:completion-for-presilting}. According to \cite[Theorem 2.27]{AiharaIyama}, $|T| = \rank K_0(\mathcal{H})$ for any silting object $T \in \der^b(\mathcal{H})$. Hence $|D| + |N| = \rank K_0(\mathcal{H})$. Then $|D| = \rank K_0(\mathcal{H}) \iff N = 0 \iff$ $D$ is silting.
\end{proof}

A presilting object $D$ is called \emph{almost silting} if $|D| = \rank K_0(\mathcal{H}) - 1$.
\begin{proposition}\label{p: ind-perp-completion}
Let $N$ be an almost silting object in $\der^b(\mathcal{H})$. Then there exists an indecomposable presilting object $D$, such that $D \oplus N$ is silting and $N \in (\thick D)^\perp$.
\end{proposition}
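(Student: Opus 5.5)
The plan is to start from an arbitrary completion of $N$, pass to the Verdier quotient by $\thick N$, and pull the resulting silting object back into ${}^\perp(\thick N)$. At the end, a large negative shift will repair the one vanishing condition that the pullback does not give for free.

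Put $\mathcal{S}:=\thick N$. By Lemma \ref{l:functorially-finite}, $\mathcal{S}$ is functorially finite in $\der^b(\mathcal{H})$. Lemma \ref{l:perp-quotient}(2) then shows that the restriction of $\mathbb{L}$ gives a triangle equivalence ${}^\perp\mathcal{S}\xrightarrow{\sim}\der^b(\mathcal{H})/\mathcal{S}$.

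By Theorem \ref{t:completion-for-presilting}, $N$ is a direct summand of a basic silting object $D'\oplus N$. Corollary \ref{c:cardinality} together with $|N|=\rank K_0(\mathcal{H})-1$ forces $D'$ to be indecomposable.

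Apply Theorem \ref{t:silting-reduction} with $\mathcal{S}$ and the silting object $N$ of $\mathcal{S}$. It shows that $\mathbb{L}(D'\oplus N)=\mathbb{L}(D')$ is a silting object of $\der^b(\mathcal{H})/\mathcal{S}$. This quotient is nonzero: otherwise $\thick N=\der^b(\mathcal{H})$, so $N$ would be silting, contradicting Corollary \ref{c:cardinality}. Hence $\mathbb{L}(D')\neq 0$, and since $D'$ is indecomposable, $\mathbb{L}(D')$ is indecomposable.

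Via the equivalence above, choose $D_0\in{}^\perp\mathcal{S}$ with $\mathbb{L}(D_0)\cong\mathbb{L}(D')$. Then $D_0$ is indecomposable, and because the equivalence is fully faithful, $\Hom(D_0,D_0[>0])\cong\Hom(\mathbb{L}D',\mathbb{L}D'[>0])=0$. Moreover $D_0\in{}^\perp\mathcal{S}$ means exactly $\Hom(D_0,N[i])=0$ for all $i\in\mathbb{Z}$.

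The one condition not yet controlled is $\Hom(N,D_0[>0])=0$, and I expect this to be the main obstacle. I would handle it by shifting. Since $\mathcal{H}$ is hereditary, $\Hom_{\der^b(\mathcal{H})}(X[a],Y[b])\neq0$ only when $b-a\in\{0,1\}$ for $X,Y\in\mathcal{H}$. As $N$ and $D_0$ have finitely many indecomposable summands, each a shifted object of $\mathcal{H}$, there is an $m\gg0$ with $\Hom(N,D_0[j])=0$ for all $j\geq 1-m$.

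Set $D:=D_0[-m]$. It is still indecomposable and lies in ${}^\perp\mathcal{S}$, since ${}^\perp\mathcal{S}$ is closed under shifts. It is still presilting, and it satisfies $\Hom(N,D[>0])=0$.

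Now check that $D\oplus N$ is presilting: all four Hom-spaces into positive shifts vanish. Also $D\notin\add N$, because $\Hom(D,N)=0$ while $D\neq0$. Hence $|D\oplus N|=\rank K_0(\mathcal{H})$, and Corollary \ref{c:cardinality} gives that $D\oplus N$ is silting.

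Finally, $\Hom(D,N[\mathbb{Z}])=0$ implies $\Hom(\thick D,N)=0$, since the objects $X$ with $\Hom(X,N[\mathbb{Z}])=0$ form a thick subcategory containing $D$. Thus $N\in(\thick D)^\perp$, as required.
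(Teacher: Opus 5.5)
Your strategy (silting reduction along $\mathcal{S}=\thick N$, lifting through ${}^\perp\mathcal{S}\simeq\der^b(\mathcal{H})/\mathcal{S}$, then shifting) is sound and genuinely different from the paper's. However, the shifting step as written fails: the shift goes the wrong way.

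The set of $j$ with $\Hom(N,D_0[j])\neq0$ is bounded, so your amplitude argument gives vanishing only for $j\gg0$. Vanishing for all $j\geq 1-m$ with $m\gg0$ would force $\Hom(N,D_0[\mathbb{Z}])=0$, which is false in general. For example, take $\mathcal{H}=\mod kQ$ with $Q\colon 1\leftarrow 2$, $N=P(1)$, and the completion $D'=S(2)[-1]$. The triangle $P(1)\to P(2)\to S(2)\to P(1)[1]$ gives $\mathbb{L}(D')\cong\mathbb{L}(P(2)[-1])$ with $P(2)[-1]\in{}^\perp\mathcal{S}$, so $D_0=P(2)[-1]$. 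Then $\Hom(N,D_0[-m][1+m])=\Hom(P(1),P(2))\neq0$ for every $m\geq0$, so no $D_0[-m]\oplus N$ is presilting.

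The repair is to shift up. Choose $m$ with $\Hom(N,D_0[j])=0$ for all $j\geq m+1$, and put $D:=D_0[m]$. Then $\Hom(N,D[i])=\Hom(N,D_0[i+m])=0$ for $i\geq1$. The condition $\Hom(D,N[\mathbb{Z}])=0$ is unaffected, since ${}^\perp\mathcal{S}$ is shift-closed. In the example this gives $D=P(2)$.

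A smaller point: you assert that $\mathbb{L}(D')$ is indecomposable because $D'$ is. Verdier localization does not preserve indecomposability in general, and nothing you cite gives it. It does hold here: run the argument of the proof of Theorem \ref{p: idempotent-factor-closed}(a), which does not use the present proposition, with the roles of $D$ and $N$ interchanged. This gives $\End(\mathbb{L}D')\cong\End(D')/[\add N](D',D')$, a nonzero quotient of a local ring. But you can avoid the issue entirely. Replace $D_0$ by any indecomposable summand; it is nonzero because $\mathbb{L}(D_0)\neq0$. Membership in ${}^\perp\mathcal{S}$ and the presilting property pass to summands, so your count via Corollary \ref{c:cardinality} goes through unchanged.

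With these repairs the proof is correct, and it compares with the paper's as follows.

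The paper stays inside silting mutation. It orders $N=N_1\oplus\cdots\oplus N_{n-1}$ as in Lemma \ref{l:no-cycle-plus}(a) and uses $p_T$ to measure how many $N_i$ are orthogonal in all degrees to the complement. A case analysis based on Lemma \ref{l:no-cycle} and degree bookkeeping then shows that one or two left mutations raise $p_T$.

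You instead use Theorem \ref{t:silting-reduction} with Lemmas \ref{l:functorially-finite} and \ref{l:perp-quotient}(2). This identifies the complement directly as a shift of the ${}^\perp\thick N$-component of an arbitrary completion, with no case analysis. Heredity enters only through functorial finiteness, completion, and bounded Hom-amplitude. Lifting the whole complement and checking generation via $\der^b(\mathcal{H})={}^\perp\mathcal{S}*\mathcal{S}=\thick D*\thick N$, the same argument proves Theorem \ref{t: perp-completion} for arbitrary presilting $N$ in one step, without the paper's iteration.

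The paper's route, in exchange, avoids Verdier quotients and is explicit about which mutations lead from a given completion to the desired one.
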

\begin{proof}
Let $\rank K_0(\mathcal{H})=n$.
Without loss of generality, assume that $N$ is basic. By Lemma \ref{l:no-cycle-plus} (a), we fix a decomposition  $N =  N_1 \oplus N_2 \oplus \cdots \oplus N_{n-1}$ of $N$, where each $N_i$ is an indecomposable summand such that $N_i \in \mathcal{H}[m_i]$ with $m_1 \leq m_2 \leq \cdots \leq m_{n-1}$ in $\mathbb{Z}$, and $\Hom_{\der^b(\mathcal{H})}(N_j, N_i) = 0$ for all $i < j$. 

Let $T=N\oplus X$ be a basic silting object of $\der^b(\mathcal{T})$. Define 
\[
p_T=\max\{l+1 \mid \Hom_{\der^b(\mathcal{H})}(X,N_i[\mathbb{Z}])=0, 1\leq i\leq l\}.
\]
If $p_T=n$, then $N\in (\thick X)^\perp$ as desired.  If $p_T<n$,  it suffices to  show that there is a silting object $M$ containing $N$ as a direct summand such that $p_M>p_T$.

Now assume that $p:=p_T<n$. 
Let 
\begin{equation}\label{tri:left-mutation}
    X\xrightarrow{f} N'\to Y\to X[1]
\end{equation} 
be the triangle associated with the left mutation of $T$ at $X$. By the definition of $p$, we have $N'\in \add N_p\oplus\cdots\oplus N_{n-1}$, which implies $\Hom_{\der^b(\mathcal{H})}(N',N_i[\mathbb{Z}])=0$ for any $1\leq i<p$. For any $1\leq i<p$, applying $\Hom_{\der^b(\mathcal{H})}(-,N_i[\mathbb{Z}])$ to the triangle \eqref{tri:left-mutation} yields an exact sequence
\[
0=\Hom_{\der^b(\mathcal{H})}(X[1],N_i[\mathbb{Z}])\to \Hom_{\der^b(\mathcal{H})}(Y,N_i[\mathbb{Z}])\to \Hom_{\der^b(\mathcal{H})}(N',N_i[\mathbb{Z}])=0.
\]
Consequently, $\Hom_{\der^b(\mathcal{H})}(Y,N_i[\mathbb{Z}])=0$ and hence $p_{\mu_X^-(T)} \geq p$.

If $N_p\in \add N'$, then $\Hom_{\der^b(\mathcal{H})}(X,N_p)\neq 0$ and $\Hom_{\der^b(\mathcal{H})}(N_p,Y)\neq 0$. Since $Y\oplus N_p$ is a presilting object,  $\Hom_{\der^b(\mathcal{H})}(Y,N_p)=0$ by Lemma \ref{l:no-cycle}. Moreover, $\Hom_{\der^b(\mathcal{H})}(Y,N_p[>0])=0$.  Note that $N_p\in \mathcal{H}[m_p]$ and $\Hom_{\der^b(\mathcal{H})}(N_p,Y)\neq 0$, it follows that $Y\in \mathcal{H}[m_p]$ or $\mathcal{H}[m_p+1]$. We conclude that $\Hom_{\der^b(\mathcal{H})}(Y,N_p[<0])=0$, and hence $\Hom_{\der^b(\mathcal{H})}(Y,N_p[\mathbb{Z}])=0$. Therefore $p_{\mu_X^-(T)}\geq p+1>p_T$ and we may take $M=\mu_X^-(T)$.

Now assume that $N_p\not\in \add N'$. Without loss of generality, we assume that $N'\in \add N_k\oplus\cdots\oplus N_{n-1}$ and $N_k\in \add N'$ for some $p<k\leq n-1$. It follows that $\Hom_{\der^b(\mathcal{H})}(X,N_p)=0$ and $\Hom_{\der^b(\mathcal{H})}(X,N_k)\neq 0$.  By the definition of $p_T$, we know that $\Hom_{\der^b(\mathcal{H})}(X,N_p[-u])\neq 0$ for some $u\geq 1$. Consequently, $X\in \mathcal{H}[m_p-u]$ or $\mathcal{H}[m_p-u-1]$. On the other hand, by $\Hom_{\der^b(\mathcal{H})}(X,N_k)\neq 0$, we have $X\in \mathcal{H}[m_k]$ or $\mathcal{H}[m_k-1]$. Noticing that $m_p\leq \cdots\leq m_k$, we conclude that $m_p=\cdots=m_k$, $u=1$ and $X\in \mathcal{H}[m_p-1]$. Consequently, $Y\in \mathcal{H}[m_p]$ and $\Hom_{\der^b(\mathcal{H})}(Y,N_p)\cong \Hom_{\der^b(\mathcal{H})}(X[1],N_p)\neq 0$. Now consider the left mutation  of $N\oplus Y$ at $Y$. By the above discussion, we conclude that $p_{\mu_Y^-(N\oplus Y)}>p$ and we may take $M=\mu_Y^-(N\oplus Y)$. This completes the proof.
\end{proof}

The following is a generalization of Proposition \ref{p: ind-perp-completion}.
\begin{theorem}\label{t: perp-completion}
Let $N$ be a presilting object in $\der^b(\mathcal{H})$. Then there exists a presilting object $D$, such that $D \oplus N$ is a silting object and $N \in (\thick D)^\perp$. Moreover, $(\thick D, \thick N)$ is a torsion pair of $\der^b(\mathcal{H})$. In particular, $N$ is a silting object in $(\thick D)^\perp$.
\end{theorem}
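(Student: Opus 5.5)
The plan is to induct on the codimension $c:=\rank K_0(\mathcal{H})-|N|$, with $N$ basic. The base case is $c=0$: then $N$ is silting by Corollary \ref{c:cardinality}, and we take $D=0$. The case $c=1$ is exactly Proposition \ref{p: ind-perp-completion}.

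For general $c\ge 1$, we first complete $N$ to a basic silting object $N\oplus X$ using Theorem \ref{t:completion-for-presilting}, with $|X|=c$. Choose an indecomposable summand $X_1$ of $X$ and write $N_1:=N\oplus X/X_1$. Then $N_1$ is presilting of codimension $c-1$, and $N_1\oplus X_1$ is silting. It is more convenient to reduce in the other direction, as follows.
\begin{itemize}
\item[(i)] Apply the induction hypothesis to the Verdier quotient. Choose an almost silting object containing $N$; we use Proposition \ref{p: ind-perp-completion} to produce an indecomposable presilting $D_1$ such that $N\oplus (X/X_1)\in(\thick D_1)^\perp$ and $D_1\oplus N\oplus X/X_1$ is silting.
\item[(ii)] Pass to $\der^b(D_1^{\perp_{\mathbb{Z}}})\simeq(\thick D_1)^\perp$ (Propositions \ref{p:identification-subcategory}, \ref{p: quotient-by-presilting}). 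This is again the derived category of a hereditary abelian category, and by silting reduction (Theorem \ref{t:silting-reduction} together with Lemma \ref{l:perp-quotient}) it admits the silting object $\mathbb{L}(N\oplus X/X_1)\cong N\oplus X/X_1$. Its Grothendieck group has rank one less.
\item[(iii)] In this smaller category $N$ is presilting of codimension $c-1$. By induction we obtain $D_2\in(\thick D_1)^\perp$ with $D_2\oplus N$ silting in $(\thick D_1)^\perp$ and $N\in(\thick D_2)^\perp$ there.
\end{itemize}
Then set $D:=D_1\oplus D_2$. We have $\Hom(D_1,D_2[\mathbb{Z}])=0$, and $\Hom(D_2,D_1[>0])$ must be handled. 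The cleaner route is to use the inverse silting reduction map described after Theorem \ref{t:silting-reduction} to lift $D_2\oplus N$ to $T'\oplus D_1$. The statement is rigged so that $N$ already lies in $(\thick D_1)^\perp$, and then $N$ is unchanged by the lift because the $\mathcal{S}_{D_1}^{<0}$-approximation of $N$ is zero. We would replace $D_2$ by its lift $\widetilde D_2$ and check that $N\in(\thick \widetilde D_2)^\perp$. The lift sits in a triangle $S\to \widetilde D_2\to D_2\to S[1]$ with $S\in\thick D_1$, so $\Hom(\widetilde D_2,N[\mathbb{Z}])$ is squeezed between $\Hom(D_2,N[\mathbb{Z}])=0$ and $\Hom(S,N[\mathbb{Z}])=0$. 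Hence $N\in(\thick D)^\perp$, where $D=D_1\oplus\widetilde D_2$, and $D\oplus N$ is silting.

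For the torsion pair, note first that $\Hom(\thick D,\thick N)=0$. Second, since $D\oplus N$ is silting and $\Hom(D,N[\mathbb{Z}])=0$, the argument in Lemma \ref{l:no-cycle-plus}(b) applies: \cite[Lemma 2.22]{AiharaIyama} shows that $\thick D*\thick N$ is thick and contains $D\oplus N$, so it equals $\der^b(\mathcal{H})$. Finally, $N$ is silting in $(\thick D)^\perp$. Indeed, the torsion pair identifies $(\thick D)^\perp=\thick N$: any $Y\in(\thick D)^\perp$ sits in a triangle $D'\to Y\to N'$, and the first map vanishes, so $Y$ is a summand of $N'$.

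The main obstacle is step (i), namely arranging $D_1$ with $N\in(\thick D_1)^\perp$ and not merely $N\oplus X/X_1$ orthogonal in some weaker sense. Proposition \ref{p: ind-perp-completion} gives $(\thick D_1)^\perp\ni$ the whole almost silting object $N\oplus X/X_1$, which is what is needed. A subtle point remains: we must verify that $(\thick D_1)^\perp$ still admits a silting object and that ranks drop by exactly one. Both follow from silting reduction together with Lemma \ref{l:functorially-finite}.
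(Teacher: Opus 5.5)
Your proof is correct, but it takes a different route from the paper's. The paper never leaves $\der^b(\mathcal{H})$. Starting from a silting completion $N\oplus U$, it applies Proposition \ref{p: ind-perp-completion} $|U|$ times. At the $i$-th step it uses the almost silting object formed by $N$, the not-yet-replaced summands of $U$, and the already constructed $D_1,\dots,D_{i-1}$. Since that whole object lies in $(\thick D_i)^\perp$, one gets at once that $N\oplus D_1\oplus\cdots\oplus D_{|U|}$ is silting with $\Hom(D_i,N[\mathbb{Z}])=0$ for all $i$; the torsion pair then follows exactly as in your last paragraph.

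You instead use the codimension-one case only once. You pass to $(\thick D_1)^\perp\simeq\der^b(D_1^{\perp_{\mathbb{Z}}})$ and apply an induction hypothesis, which implicitly ranges over all hereditary categories whose derived category admits a silting object. You then have to repair the possible failure of $\Hom(D_2,D_1[>0])=0$ by lifting $D_2\oplus N$ along the inverse of the bijection in Theorem \ref{t:silting-reduction}. This works: the lift fixes $N$, and your triangle argument correctly gives $\Hom(\widetilde D_2,N[\mathbb{Z}])=0$. However, it needs considerably more machinery: functorial finiteness of $\thick D_1$ (Lemma \ref{l:functorially-finite}), Proposition \ref{p:identification-subcategory}, the rank count in the smaller category, and the explicit inverse map. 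The paper's iteration avoids all of this by keeping the previously found $D_j$ inside the almost silting object, so no presilting condition between old and new summands ever needs repairing. What your route gives in return is a recursive formulation compatible with the reduction from $\mathcal{H}$ to $D_1^{\perp_{\mathbb{Z}}}$, in the spirit of the silting-reduction arguments used later in the paper.

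Two small corrections:
\begin{itemize}
\item $N\oplus X/X_1$ is almost silting, so it has codimension $1$, not $c-1$.
\item The $\mathcal{S}_{D_1}^{<0}$-approximation of $N$ vanishes because $\Hom(N,D_1[>0])=0$, which holds since $D_1\oplus N\oplus X/X_1$ is presilting. It does not follow from $N\in(\thick D_1)^\perp$, which concerns morphisms in the opposite direction.
\end{itemize}
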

\begin{proof}
Suppose $N \oplus U$ is a silting object in $\der^b(\mathcal{H})$. Let $U_1$ be an indecomposable direct summand of $U$. By Proposition \ref{p: ind-perp-completion}, there exists a presilting object $D_1$ such that $N \oplus (U/U_1) \oplus D_1$ is silting and $\Hom_{\der^b(\mathcal{H})}(D_1, (N \oplus U/U_1)[\mathbb{Z}]) = 0$. By iterating this procedure for the remaining indecomposable direct summands of $U$, we obtain a silting object $N \oplus D_1 \oplus D_2 \oplus \cdots \oplus D_{|U|}$, which satisfies $\Hom_{\der^b(\mathcal{H})}(D_i, N[\mathbb{Z}]) = 0$ for all $1 \leq i \leq |U|$. Setting $D := \bigoplus_{i=1}^{|U|} D_i$, it follows that $N \in (\operatorname{thick} D)^\perp$.

Following the same argument as the proof of Lemma \ref{l:no-cycle-plus} (b), we have $\thick (N \oplus D) = \thick D * \thick N $. Since $N \oplus D$ is a silting object, we have $\thick (N \oplus D) = \der^b(\mathcal{H})$. This implies that $(\thick D, \thick N)$ is a torsion pair of $\der^b(\mathcal{H})$, and hence $\thick N = (\thick D)^\perp$.
\end{proof}

\section{Reduction of endomorphism algebras}  \label{s:closure properties}
In this section, we investigate the properties of endomorphism algebras of silting objects over hereditary abelian categories, specifically focusing on their behavior under operations related to idempotents and $\tau$-reduction.

Let $\mathcal{H}$ be a hereditary abelian category. Recall that a rigid object $T\in \mathcal{H}$ is called {\em tilting} if $\Hom_{\mathcal{H}}(T, X) = 0 = \Ext_{\mathcal{H}}^1 (T, X)$ implies $X = 0$.
Following \cite[Definition 4.1 and Corollary 4.12]{BZ16b}, a complex $T$ in $\der^b(\mathcal{H})$ is called a \emph{2-term silting complex} over $\mathcal{H}$ if it satisfies the following conditions:
\begin{itemize}
    \item[(S1)] $\Hom_{\der^b(\mathcal{H})}(T, M[i]) = 0$ for every $M$ in $\mathcal{H}$ and all $i \neq 0$ or $1$.
    \item[(S2)] $T$ is a silting object in $\der^b(\mathcal{H})$.
\end{itemize}
A $k$-algebra $A$ is called {\em quasi-tilted} if there exists a hereditary abelian category $\mathcal{H}$ and a basic tilting object $T\in \mathcal{H}$ such that $A \cong\End_{\mathcal{H}}(T)$.
A $k$-algebra $A$ is called {\em quasi-silted} if there exists a hereditary abelian category $\mathcal{H}$ and a basic 2-term silting complex $T$ over $\mathcal{H}$ such that $A \cong \End_{\der^b(\mathcal{H})}(T)$. An Artin algebra $\Lambda$ is called {\em shod} (small homological dimension) if every indecomposable $\Lambda$-module $M$ satisfies either $\pd_\Lambda M \leq 1$ or $\id_\Lambda M \leq 1$. A fundamental property of shod algebras is that their global dimension $ \leq 3$. In particular, a shod algebra $\Lambda$ is called {\em strictly shod} if $\gld \Lambda = 3$. The following result from \cite{BZ16b} establishes the relationship between these two classes of algebras.
\begin{theorem}\label{t: shod and quasisilted}
Let $A$ be a connected finite-dimensional algebra over an algebraically closed field. Then $A$ is shod if and only if it is  quasi-silted.
\end{theorem}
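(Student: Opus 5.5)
The plan is to prove the two implications separately. For ``shod $\Rightarrow$ quasi-silted'' I would rely on known structure theory of shod algebras. For ``quasi-silted $\Rightarrow$ shod'' I would compute projective and injective dimensions directly from a 2-term silting complex.

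\emph{Shod implies quasi-silted.} Let $A$ be connected and shod; then $\gld A\leq 3$. I split into two cases.
\begin{itemize}
\item[(i)] If $\gld A\leq 2$, then $A$ is quasi-tilted by the Happel--Reiten--Smal{\o} characterization: gl.dim at most $2$, and every indecomposable has $\pd\leq 1$ or $\id\leq 1$. So $A\cong\End_{\mathcal{H}}(T)$ for a tilting object $T$ in a hereditary abelian category $\mathcal{H}$. Such a $T$ is a 2-term silting complex. Condition (S1) holds because $\Ext^{\geq 2}_{\mathcal{H}}=0$ and $\Hom(T,M[<0])=0$. Condition (S2) holds because $T$ is rigid and generates $\der^b(\mathcal{H})$: the tilting condition gives $T^{\perp_{[0,1]}}=0$, hence $(\thick T)^\perp=0$ by Proposition \ref{p:identification-subcategory}, and $\thick T$ is functorially finite by Lemma \ref{l:functorially-finite}.
\item[(ii)] If $\gld A=3$, so $A$ is strictly shod, I invoke the Coelho--Lanzilotta/Reiten--Skowro\'nski description of strictly shod algebras. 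Over an algebraically closed field it identifies them, as in \cite{BZ16a}, with silted algebras of global dimension $3$, i.e.\ $A\cong\End_{\der^b(kQ)}(T)$ for a 2-term silting complex over a hereditary algebra $kQ$. Taking $\mathcal{H}=\mod kQ$ shows $A$ is quasi-silted.
\end{itemize}
Connectedness and algebraic closedness of $k$ are used exactly here, through these classification results.

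\emph{Quasi-silted implies shod.} Let $T$ be a basic 2-term silting complex over $\mathcal{H}$ and $A=\End(T)$. By (S1) and heredity, $T=T^0\oplus T^1[1]$ with $T^0,T^1\in\mathcal{H}$. Set
\[
\mathcal{T}=\{X\in\mathcal{H}\mid \Hom(T,X[1])=0\},\qquad \mathcal{F}=\{X\in\mathcal{H}\mid \Hom(T,X)=0\}.
\]
I will first show that $(\mathcal{T},\mathcal{F})$ is a torsion pair in $\mathcal{H}$, using that $T$ is silting. Next I show that $\Hom(T,-)$ induces an equivalence between $\mod A$ and the quotient of the heart $\mathcal{F}[1]*\mathcal{T}$ of the $t$-structure determined by $T$, modulo the appropriate ideal (objects of $\add T[1]$). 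Every indecomposable $A$-module $M$ is then of the form $\Hom(T,X)$ with either $X\in\mathcal{T}$ or $X\in\mathcal{F}[1]$.

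For $X\in\mathcal{T}$ I aim to show $\pd_A M\leq 1$. I would take a right $\add T$-approximation triangle $T_1\to T_0\to X\to T_1[1]$ and use $\Ext^2_{\mathcal{H}}=0$ to check that it yields a projective resolution of length $1$. Dually, for $X\in\mathcal{F}[1]$ I aim to show $\id_A M\leq 1$, working with $\nu T$ and left approximations and using Serre-duality-type arguments where available, or else the dual torsion-pair description. Thus every indecomposable has $\pd\leq1$ or $\id\leq 1$, so $A$ is shod.

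The main obstacle is this last homological step. One must identify $\Ext^i_A$ with morphisms in $\der^b(\mathcal{H})$ between objects of the heart, accounting for the quotient by $\add T[1]$, and verify that hereditarity of $\mathcal{H}$ forces the vanishing of $\Ext^2_A(M,-)$ for torsion-type $M$ and of $\Ext^2_A(-,M)$ for torsion-free-type $M$. I would first try the Buan--Zhou argument for 2-term silting over hereditary algebras, replacing projective $kQ$-modules by the torsion-pair approximations.
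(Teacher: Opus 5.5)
The paper does not prove this statement; it quotes it from Buan--Zhou \cite{BZ16b}. Your direction ``shod $\Rightarrow$ quasi-silted'' is acceptable. Case (i) is a correct argument, combining HRS with Lemma \ref{l:wakamatsu-lemma}, Lemma \ref{l:functorially-finite} and Proposition \ref{p:identification-subcategory}. Case (ii) is a citation, just as in the paper. Note, though, that the substantive input there is Buan--Zhou's theorem that strictly shod algebras are silted. The Coelho--Lanzilotta/Reiten--Skowro\'nski description only identifies them as double tilted algebras of global dimension $3$.

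The genuine gap is in ``quasi-silted $\Rightarrow$ shod''.

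\emph{The equivalence is misstated.} You have $T[1]\notin\mathcal{C}:=\mathcal{F}[1]*\mathcal{T}$, since $\Hom(T,T)\neq 0$, so ``the heart modulo $\add T[1]$'' is meaningless. You are conflating $(\add T*\add T[1])/[\add T[1]]\simeq\mod A$ with the statement you actually need: that $\Hom(T,-)\colon\mathcal{C}\to\mod A$ is itself an equivalence. That statement requires proof for an arbitrary hereditary $\mathcal{H}$, as does the torsion pair $(\mathcal{T},\mathcal{F})$.

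\emph{The injective-dimension half is not argued, and the proposed route fails.} $\der^b(\mathcal{H})$ need not have a Serre functor, so $\nu T$ is unavailable. Nor can you ``identify $\Ext^i_A$ with morphisms in $\der^b(\mathcal{H})$'', because $T$ is only silting and there is no derived equivalence. For strictly shod $A$ one has $\gld A=3$, while $\Hom(X,Z[3])=0$ for all $X,Z\in\mathcal{C}$.

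\emph{The missing idea.} For the heart of any $t$-structure, the natural map $\Ext^2_{\mathcal{C}}(X,Z)\to\Hom_{\der^b(\mathcal{H})}(X,Z[2])$ is injective (Beilinson--Bernstein--Deligne, Remark 3.1.17). The indecomposables of $\mathcal{C}$ are stalk complexes lying in $\mathcal{T}$ or $\mathcal{F}[1]$. Hereditarity of $\mathcal{H}$ then gives, simultaneously, $\Ext^2_A(\Hom(T,X),-)=0$ for $X\in\mathcal{T}$ and $\Ext^2_A(-,\Hom(T,F[1]))=0$ for $F\in\mathcal{F}$.

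\emph{Your $\pd$ half can be finished directly, but you omit a step.} Take a minimal right $\add T$-approximation of $X\in\mathcal{T}$. Its cokernel vanishes, because $\mathcal{T}$ is closed under quotients and $\mathcal{T}\cap\mathcal{F}=0$. So its cocone is an object $K\in\mathcal{T}$, and $K$ is $\Ext$-projective in $\mathcal{T}$ because $\Ext^2_{\mathcal{H}}=0$. Hence $T\oplus K$ is presilting, and $K\in\add T$ by maximality of silting objects. This is what makes your triangle $T_1\to T_0\to X\to T_1[1]$ have $T_1\in\add T$.
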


\subsection{Idempotent quotients} For a triangulated category $\mathcal{T}$ and a subcategory $\mathcal{X}$, we denote by $[\mathcal{X}]$ the ideal of $\mathcal{T}$ consisting of morphisms that factor through objects in $\mathcal{X}$.

\begin{lemma}\label{l: ideal of idempotent}
Let $T = D\oplus N$ be a basic silting complex in $\der^b(\mathcal{H})$, and  $e_D$ be the idempotent of $\End_{\der^b(\mathcal{H})}(T)$ corresponding to $D$. Then $\langle e_D \rangle = [\add D](T, T) = [\thick D](T, T)$.
\end{lemma}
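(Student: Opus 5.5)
We prove the chain $\langle e_D\rangle \subseteq [\add D](T,T)\subseteq [\thick D](T,T)\subseteq \langle e_D\rangle$. The first two inclusions are formal; all the work is in the last one.

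For the first inclusion, recall that $e_D$ is the composite $T\to D\to T$ of the projection and the inclusion. An element of $\langle e_D\rangle = Ae_DA$, with $A=\End_{\der^b(\mathcal{H})}(T)$, is a sum of morphisms $a e_D b$. Each such morphism factors through $D$, so it lies in $[\add D](T,T)$. Conversely, a morphism $T\to D^m\to T$ can be written as a sum of morphisms factoring through single copies of $D$, and each of these lies in $A e_D A$. So in fact $\langle e_D\rangle = [\add D](T,T)$. The inclusion $[\add D](T,T)\subseteq[\thick D](T,T)$ is trivial, since $\add D\subseteq \thick D$.

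For the last inclusion, take $f:T\xrightarrow{u} S\xrightarrow{v} T$ with $S\in\thick D$. I would use Lemma \ref{lem:thick-subcat-as-extension}, which gives $S\in \add D[-l]*\cdots*\add D[l]$ for some $l\ge 0$, and show that $f$ factors through $\add D$ by induction on the length of this filtration.

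The key input is that $T$ is presilting and $D\in\add T$. Hence $\Hom(T, D[i])=0$ for $i>0$, and $\Hom(D[i],T)=\Hom(D,T[-i])$ vanishes for $i<0$. In words, $T$ sees only the nonpositive shifts of $D$ from the right, and only the nonnegative shifts of $D$ from the left. So we want to cut $S$ down to the part in degree $0$.

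We do this in two steps.

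First, we split $S$ by a triangle $S'\to S\to S''\to S'[1]$ with $S'\in \add D[-l]*\cdots*\add D[0]$ and $S''\in\add D[1]*\cdots*\add D[l]$. Since $\Hom(T,S'')=0$, the map $u$ factors through $S'$. We may therefore replace $S$ by $S'$, which is built only from $D[i]$ with $i\le 0$.

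Second, we split $S'$ by a triangle $S_1\to S'\to D_0\to S_1[1]$ with $S_1\in \add D[-l]*\cdots*\add D[-1]$ and $D_0\in\add D$. Since $\Hom(S_1,T)=0$ (each $\Hom(D[i],T)$ with $i<0$ vanishes, and this passes through extensions), the map $v|_{S'}$ factors through $S'\to D_0$. Hence $f$ factors through $D_0\in \add D$.

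The main obstacle is the bookkeeping in these truncation triangles. Lemma \ref{lem:thick-subcat-as-extension} presents $S$ as an iterated extension in the order $D[-l],\dots,D[l]$, and we need the associativity of $*$ to regroup the factors into exactly the triangles above. We also need the Hom-vanishing to pass through extensions, which holds by the long exact sequences. Once these are in place, the two factorization steps use only the presilting condition on $T$.
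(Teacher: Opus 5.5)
Your proposal is correct and follows the paper's proof. The paper also identifies $\langle e_D\rangle$ with $[\add D](T,T)$ and writes the intermediate object in $\add D[-l]*\cdots*\add D[l]$ via Lemma~\ref{lem:thick-subcat-as-extension}. It first cuts off the positive shifts using $\Hom(T,\add D[1]*\cdots*\add D[l])=0$, then the negative shifts using $\Hom(\add D[-l]*\cdots*\add D[-1],T)=0$, so the morphism factors through $\add D$; the regrouping via associativity of $*$ and the vanishing along extensions that you flag are used there implicitly in the same way.
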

\begin{proof}
Note that $\langle e_D \rangle$ consists of those morphisms in $\End_{\der^b(\mathcal{H})}(T)$ that factor through objects in $\add D$. In other words, $\langle e_D \rangle = [\add D](T, T)$. Let $f \in [\thick D](T, T)$, and  assume $f$ factors through $X\in \thick D$ as $T \xra{a} X \xra{b} T$.  By Lemma \ref{lem:thick-subcat-as-extension}, we have
\[
\thick D = \bigcup\limits_{l \geq 0} \add (D[-l]) * \add (D[-l + 1]) * \cdots * \add (D[l]).
\]
Assume $l > 0$ such that $X \in \add (D[-l]) * \add (D[-l + 1]) * \cdots * \add (D[l])$. Then there exists a triangle $X_1 \xra{u} X \to X_2 \to X_1[1]$ with $X_1 \in \add (D[-l]) * \add (D[-l + 1]) * \cdots * \add D$ and $X_2 \in \add (D[1]) * \add (D[2]) * \cdots * \add (D[l])$. Since $T$ is silting, then $\Hom(T, X_2) = 0$ and hence $a = uc$ for some $c : T \to X_1$. There also exists a triangle $X_3 \to X_1 \xra{v} X_4 \to X_3[1]$ with $X_3 \in \add (D[-l]) * \add (D[-l + 1]) * \cdots * \add (D[-1])$ and $X_4 \in \add D$. Again since $T$ is silting, then $\Hom(X_3, T) = 0$ and hence $bu = dv$ for some $d : X_4 \to T$. Then $f = ba = buc = dvc \in [\add D](T, T)$.
\end{proof}

\begin{theorem}\label{p: idempotent-factor-closed} 
Let $A$ be the endomorphism algebra of a basic silting complex over a hereditary abelian category, and $e\in A$ be an idempotent.
\begin{itemize}
    \item[(a)] Then $A / \langle e \rangle$ is isomorphic to the endomorphism algebra of a silting complex over some hereditary abelian category.
    \item[(b)] Moreover, if $A$ is quasi-silted, then $A / \langle e \rangle$ is also quasi-silted.
\end{itemize}
\end{theorem}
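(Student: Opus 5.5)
Write $A\cong \End_{\der^b(\mathcal{H})}(T)$ with $T$ a basic silting complex, and let $e$ correspond to the summand $D$, so that $T=D\oplus N$ and $e=e_D$. By Lemma \ref{l: ideal of idempotent}, $\langle e\rangle=[\thick D](T,T)$. Hence $A/\langle e\rangle\cong \End_{\der^b(\mathcal{H})}(T)/[\thick D](T,T)$.

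\emph{Identify the quotient with an endomorphism algebra in the Verdier quotient.} By Lemma \ref{l:functorially-finite}, $\thick D$ is functorially finite. We want
\[
\End_{\der^b(\mathcal{H})/\thick D}(\mathbb{L}T)\cong \End_{\der^b(\mathcal{H})}(T)/[\thick D](T,T),
\]
and we compute it via the torsion pair $(\,{}^\perp\thick D,\thick D)$ from Lemma \ref{l:wakamatsu-lemma}. Take the triangle $Y\to T\xrightarrow{g} S\to Y[1]$, where $g$ is a left $\thick D$-approximation and $Y\in{}^\perp\thick D$. By Lemma \ref{l:perp-quotient}(2),
\[
\Hom_{\der^b(\mathcal{H})/\thick D}(\mathbb{L}T,\mathbb{L}T)\cong \Hom(Y,Y)\cong\Hom(Y,T).
\]
Apply $\Hom(-,T)$ to the triangle. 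The map $\Hom(T,T)\to\Hom(Y,T)$ has kernel exactly the morphisms factoring through $g$, which is $[\thick D](T,T)$. Its cokernel embeds into $\Hom(S[-1],T)$, so we must show $\Hom(Y,T)$ is hit. Since $\Hom(S,T[1])$-type terms are controlled by the silting property, I would argue more cleanly by the standard fact: for a functorially finite thick $\mathcal{S}$, morphisms $\mathbb{L}X\to\mathbb{L}X'$ are roofs, and every roof $T\leftarrow T'\to T$ with cone in $\mathcal{S}$ can be replaced by a morphism $T\to T$. This is the point where the silting property enters. It gives $\Hom(T,S'[1])=0$ for $S'\in\mathcal{S}^{\le 0}_D$, and using the description of $\thick D$ from Lemma \ref{lem:thick-subcat-as-extension} we truncate $S$ into the part in positive shifts (to which $T$ has no maps) and the rest. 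This surjectivity is the step I expect to be the main obstacle. The alternative is to invoke the silting-reduction setup directly: in \cite[Theorem 2.37]{AiharaIyama}, the reduction $\mathcal{Z}=({}^{\perp}\mathcal{S}^{>0}_D)\cap(\mathcal{S}^{<0}_D)^{\perp}$-type description identifies $\End(\mathbb{L}T)$ with $\End(T)/[D]$.

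\emph{Realize it over a hereditary category.} By Theorem \ref{t:silting-reduction}, $\mathbb{L}T$ is silting in $\der^b(\mathcal{H})/\thick D$. By Proposition \ref{p: quotient-by-presilting}, this quotient is triangle equivalent to $\der^b(D^{\perp_{\mathbb{Z}}})$, and $D^{\perp_{\mathbb{Z}}}$ is hereditary abelian by Proposition \ref{p: perp-for-presilting}. Hom- and Ext-finiteness are inherited. The image of $\mathbb{L}T$ is basic after removing the zero summands coming from $D$, since its endomorphism ring is the basic algebra $A/\langle e\rangle$. This proves (a).

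For (b), suppose $T$ is 2-term over $\mathcal{H}$. Use the identification with $(\thick D)^\perp\simeq\der^b(D^{\perp_\mathbb{Z}})$ and realize $\mathbb{L}N$ as the object $N'$ of $(\thick D)^\perp$ fitting into $S_N\to N\to N'\to S_N[1]$, with $S_N\in\thick D$ a right approximation. Condition (S1) for $N'$ with respect to $M\in D^{\perp_{\mathbb{Z}}}\subseteq\mathcal{H}$ follows from
\[
\Hom(N',M[i])\cong\Hom(N,M[i]),
\]
which holds because $\Hom(\thick D,M[\mathbb{Z}])=0$. The right-hand side vanishes for $i\ne0,1$ since $T$ is 2-term. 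Hence $N'$ is a 2-term silting complex over $D^{\perp_{\mathbb{Z}}}$, and $A/\langle e\rangle$ is quasi-silted.
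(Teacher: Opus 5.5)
Your skeleton matches the paper's: you localize at $\mathcal{S}=\thick D$, use Proposition \ref{p: quotient-by-presilting} and Theorem \ref{t:silting-reduction}, and in (b) you make the same computation $\Hom(N',M[i])\cong\Hom(N,M[i])$. There is a genuine gap, however. The step that carries the content of (a), namely $\End_{\der^b(\mathcal{H})/\mathcal{S}}(\mathbb{L}T)\cong\End(T)/[\thick D](T,T)$, is not proved.

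From $Y\to T\xrightarrow{g}S\to Y[1]$ you correctly get that $\End(T)\to\Hom(Y,T)\cong\End(\mathbb{L}T)$ has kernel $[\thick D](T,T)$. That only gives an \emph{injection} $A/\langle e\rangle\hookrightarrow\End(\mathbb{L}T)$. For surjectivity you invoke a ``standard fact'' that roofs can be replaced by honest morphisms. That is fullness restated, and it is false for a general functorially finite thick subcategory.

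For a counterexample, take the path algebra $\Lambda$ of type $\mathbb{A}_2$ with simple projective $P_1$ and simple injective $S_2$. Put $\mathcal{S}=\thick S_2$ and $X=P_1\oplus P_2$, which is tilting. Then $\mathbb{L}P_1\cong\mathbb{L}P_2$ and $\der^b(\Lambda)/\mathcal{S}\simeq\der^b(k)$, so $\End(\mathbb{L}X)\cong M_2(k)$ has dimension $4$. On the other hand, $[\mathcal{S}](X,X)=0$ and $\End(X)$ has dimension $3$.

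So the claim genuinely depends on $D\in\add T$ together with the silting vanishing, and you never derive it from these. Your ``$\mathcal{Z}$-type description'' is not part of Theorem \ref{t:silting-reduction} as stated either. Basicness and the conclusion of (b) both rest on this isomorphism, so the proof is incomplete as it stands.

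What is missing is exactly $\Hom(S[-1],T)=0$, and for that you must know where $S$ lies. This is fixable in your setting:
\begin{enumerate}
\item $g$ is left minimal: if $\phi g=g$, then $\phi-1$ factors through $S\to Y[1]$, and $\Hom(Y[1],S)=0$, so $\phi=1$.
\item Because $\Hom(T,D[>0])=0$, $g$ factors through the truncation $S_1\to S$ with $S_1\in\add D[-l]*\cdots*\add D$, which comes from Lemma \ref{lem:thick-subcat-as-extension}.
\item By minimality, $S$ is a direct summand of $S_1$, and $\Hom(S,T[1])=0$ follows from $\Hom(D,T[>0])=0$.
\end{enumerate}
The vanishing you wrote, $\Hom(T,S'[1])=0$ for $S'\in\mathcal{S}_D^{\le 0}$, is the one needed for the right-approximation triangle, not for yours.

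The paper works with that right-approximation triangle, $S\xrightarrow{f}N\xrightarrow{g}S_N\to S[1]$, which you already use in (b). It notes $S\in\mathcal{S}_D^{\le0}$, so $\Hom(N,S[1])=0$ and $\Hom(N,S_N)\cong\End(N)/[\thick D](N,N)$. It also notes that $g^*\colon\End(S_N)\to\Hom(N,S_N)$ is bijective because $S_N\in\mathcal{S}^\perp$. With either argument inserted, the rest of your proof of (a) and (b) goes through.
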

\begin{proof}
Suppose $\mathcal{H}$ is a hereditary abelian category and $T\in \der^b(\mathcal{H})$ is a basic silting object with $A\cong \End_{\der^b(\mathcal{H})}(T)$. Let  $T=D\oplus N$ and $e_D$ be the idempotent of $\End_{\der^b(\mathcal{H})}(T)$ corresponding to $D$. It suffices to show that $\End_{\der^b(\mathcal{H})} (T) / \langle e_D \rangle$ is isomorphic to the endomorphism algebra of a basic silting complex over some hereditary abelian category.

Let $\mathcal{S}:=\thick D$. Since $\mathcal{S}$ is functorially finite, we consider the following triangle in $\der^b(\mathcal{H})$
\begin{equation}\label{tri:torsion-pair}
    S\xrightarrow{f} N\xrightarrow{g} S_N\to S[1],
\end{equation}
where $f$ is a minimal right $\mathcal{S}$-approximation of $N$. It follows that $S_N\in \mathcal{S}^\perp$ by Lemma \ref{l:wakamatsu-lemma}.  On the other hand, by Lemma \ref{lem:thick-subcat-as-extension}, one observes that $S\in \mathcal{S}_D^{\leq 0}:=\bigcup_{l\geq 0}\add D\ast\cdots\ast\add D[l]$.

Applying $\Hom_{\der^b(\mathcal{H})}(N,-)$ to the triangle \eqref{tri:torsion-pair} yields an exact sequence
\[
\Hom_{\der^b(\mathcal{H})}(N,S)\xrightarrow{f_*}\Hom_{\der^b(\mathcal{H})}(N,N)\xrightarrow{g_*}\Hom_{\der^b(\mathcal{H})}(N,S_N)\to \Hom_{\der^b(\mathcal{H})}(N,S[1]).
\]
Since $S\in \mathcal{S}_D^{\leq 0}$, $\Hom_{\der^b(\mathcal{H})}(N,S[1])=0$. By Lemma \ref{l: ideal of idempotent}, we obtain \[\Hom_{\der^b(\mathcal{H})}(N,S_N)\cong \Hom_{\der^b(\mathcal{H})}(N,N)/\im f_*=\End_{\der^b(\mathcal{H})}(T)/\langle e_D\rangle.\]
On the other hand, applying $\Hom_{\der^b(\mathcal{H})}(-,S_N)$ to the triangle \eqref{tri:torsion-pair}, we obtain the following exact sequence
\[
\Hom_{\der^b(\mathcal{H})}(S[1],S_N)\to \Hom_{\der^b(\mathcal{H})}(S_N,S_N)\xrightarrow{g^\ast}\Hom_{\der^b(\mathcal{H})}(N,S_N)\to \Hom_{\der^b(\mathcal{H})}(S,S_N),
\]
where $\Hom_{\der^b(\mathcal{H})}(S[1],S_N)=0=\Hom_{\der^b(\mathcal{H})}(S,S_N)$ since $S_N\in \mathcal{S}^{\perp}$.
Consequently, we obtain isomorphisms of vector spaces 
\[
\Hom_{\der^b(\mathcal{H})}(S_N,S_N)\xrightarrow{g^\ast}\Hom_{\der^b(\mathcal{H})}(N,S_N)\xleftarrow{\bar{g}_*}\Hom_{\der^b(\mathcal{H})}(N,N)/\im f_*.
\]
It is routine to check that $\bar{g}_*^{-1}\circ g^\ast$ is an isomorphism of algebras. 

Let $\mathbb{L}:\der^b(\mathcal{H})\to \der^b(\mathcal{H})/\mathcal{S}$ be the localization functor. According to Proposition \ref{p: quotient-by-presilting}, we have $\der^b(\mathcal{H})/\mathcal{S}\cong \der^b(D^{\perp_\mathbb{Z}})$.  Clearly, $\mathbb{L}(T)=\mathbb{L}(S_N)$. By Theorem \ref{t:silting-reduction}, $\mathbb{L}(S_N)$ is a basic silting object of $\der^b(\mathcal{H})/\mathcal{S}$. 
Since $S_N\in \mathcal{S}^\perp$, we conclude that $\End_{\der^b(\mathcal{H})/\mathcal{S}}(\mathbb{L}(N))\cong \End_{\mathcal{S}^{\perp}}(S_N)\cong \End_{\der^b(\mathcal{H})}(S_N)$ by Lemma \ref{l:perp-quotient}, and hence  $\End_{\der^b(\mathcal{H})/\mathcal{S}}(\mathbb{L}(N))\cong \End_{\der^b(\mathcal{H})} (T) / \langle e_D \rangle$.
This finishes the proof of $(a)$.

 Now assume $T$ is 2-term (\ie, $T$ satisfies (S1)) in $\der^b(\mathcal{H})$.
In view of  Proposition \ref{p:identification-subcategory}, we identify  $\der^b(D^{\perp_\mathbb{Z}})$ with $\mathcal{S}^\perp$ via the embedding $\iota:\der^b(D^{\perp_{\mathbb{Z}}})\hookrightarrow \der^b(\mathcal{H})$. Consequently, it remains to verify that $S_N$ is 2-term in $\der^b(D^{\perp_{\mathbb{Z}}})$ with respect to $D^{\perp_{\mathbb{Z}}}$.

Let $M \in D^{\perp_{\mathbb{Z}}}$ and $i \neq 0, 1$. Applying $\Hom_{\der^b(\mathcal{H})} (-, M[i])$ to the triangle \eqref{tri:torsion-pair}, we obtain an exact sequence
\begin{align*}
0 \overset{M \in D^{\perp_{\mathbb{Z}}}}{=} \Hom_{\der^b(\mathcal{H})} (S, M[i - 1]) &\to \Hom_{\der^b(\mathcal{H})} (S_N, M[i])\\
 &\to \Hom_{\der^b(\mathcal{H})} (N, M[i]) \overset{T~\text{is 2-term}}{=} 0.
\end{align*}
It follows that $\Hom_{\der^b(D^{\perp_\mathbb{Z}})} (S_N, M[i])=\Hom_{\der^b(\mathcal{H})} (S_N, M[i])=0$ for $i\neq 0,1$. Hence $S_N$ is a 2-term silting complex over $D^{\perp_\mathbb{Z}}$.
\end{proof}

\begin{remark} \label{r: shod is idem-quotient closed} Let $A$ be a finite-dimensional algebra over an algebraically closed field, and $e\in A$ be an idempotent. By Theorem \ref{t: shod and quasisilted} and Theorem \ref{p: idempotent-factor-closed}  (b), if $A$ is shod, then $A / \langle e \rangle$ is also shod.
\end{remark}

\subsection{Idempotent subalgebras} 

\begin{theorem}\label{p: idempotent-subalg-closed}
Let $A$ be the endomorphism algebra of a basic silting complex over a hereditary category, and $e\in A$ be an idempotent.
\begin{itemize}
    \item[(a)] The subalgebra $eAe$ is isomorphic to the endomorphism algebra of a basic silting complex over some hereditary category.
    \item[(b)] If $A$ is quasi-silted, then $eAe$ is also quasi-silted.
    \item[(c)] If $A$ is quasi-tilted, then $eAe$ is also quasi-tilted.
\end{itemize}
\end{theorem}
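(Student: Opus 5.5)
We realize $eAe$ as the endomorphism algebra of the complementary summand, and then use Theorem~\ref{t: perp-completion} to place that summand as a silting object inside a smaller derived category of a hereditary category. Write $A\cong \End_{\der^b(\mathcal{H})}(T)$ with $T$ a basic silting complex. Since $A$ is basic and idempotent subalgebras are determined up to isomorphism by the summands involved, we may assume $e=e_N$ is the idempotent attached to a direct summand $N$ of $T=N\oplus D$. Then $eAe\cong \End_{\der^b(\mathcal{H})}(N)$, and $N$ is a basic presilting object.

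For (a), apply Theorem~\ref{t: perp-completion} to $N$. This yields a presilting $D'$ such that $D'\oplus N$ is silting, $N\in(\thick D')^\perp$, and $N$ is silting in $(\thick D')^\perp=\thick N$. By Proposition~\ref{p: perp-for-presilting}, $\mathcal{H}':=D'^{\perp_{\mathbb{Z}}}$ is a hereditary abelian subcategory of $\mathcal{H}$, and it inherits Hom- and Ext-finiteness. By Proposition~\ref{p:identification-subcategory}, the fully faithful functor $\iota:\der^b(\mathcal{H}')\to\der^b(\mathcal{H})$ has essential image $(\thick D')^\perp$. Hence $N$ corresponds to a basic silting complex $N'$ over $\mathcal{H}'$, and full faithfulness of $\iota$ gives
\[
\End_{\der^b(\mathcal{H}')}(N')\cong\End_{\der^b(\mathcal{H})}(N)\cong eAe .
\]
Here I use the same $D'$ and $\mathcal{H}'$ in (b) and (c) as well.

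For (b), assume $T$ satisfies (S1). Then $N$ satisfies $\Hom(N,M[i])=0$ for all $M\in\mathcal{H}$ and $i\neq 0,1$. This holds in particular for $M\in\mathcal{H}'\subseteq\mathcal{H}$. The Hom spaces computed in $\der^b(\mathcal{H}')$ agree with those in $\der^b(\mathcal{H})$ via $\iota$, so $N'$ satisfies (S1) over $\mathcal{H}'$. Together with (a), this shows $N'$ is a 2-term silting complex and $eAe$ is quasi-silted.

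For (c), assume $T\in\mathcal{H}$ is a tilting object, so $N\in\mathcal{H}$ is rigid. Since $N\in(\thick D')^\perp$ and $N$ lies in degree $0$, we get $N\in D'^{\perp_{\mathbb{Z}}}=\mathcal{H}'$; by the description of $\iota$, $N'$ is just $N$ regarded as an object of the abelian category $\mathcal{H}'$. Rigidity in $\mathcal{H}'$ follows because $\Ext^1_{\mathcal{H}'}=\Ext^1_{\mathcal{H}}$ on $\mathcal{H}'$. It remains to check the tilting condition. Suppose $X\in\mathcal{H}'$ satisfies $\Hom(N,X)=0=\Ext^1(N,X)$. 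Heredity of $\mathcal{H}$ and the fact that $N$ lies in degree $0$ give $\Hom(N,X[i])=0$ for all $i\in\mathbb{Z}$, so $X\in(\thick N)^\perp$. On the other hand, $X\in(\thick D')^\perp=\thick N$ by Theorem~\ref{t: perp-completion}. Therefore $\Hom(X,X)=0$, and so $X=0$. Thus $N$ is a tilting object of $\mathcal{H}'$, and $eAe\cong\End_{\mathcal{H}'}(N)$ is quasi-tilted.

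The main obstacle is the availability of Theorem~\ref{t: perp-completion}: the complement must be orthogonal to $N$ in all degrees, and not merely give a silting completion. Since that theorem is already established, the remaining work is routine. Its one delicate point is making sure that in (c) the object $N$ genuinely lies in the heart $\mathcal{H}'$ rather than in some shift of it, which holds because $N\in\mathcal{H}$.
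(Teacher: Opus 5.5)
Your proposal is correct. For (a) and (b) it is the paper's argument: Theorem~\ref{t: perp-completion}, followed by identifying $(\thick D')^\perp$ with $\der^b((D')^{\perp_{\mathbb{Z}}})$ via Proposition~\ref{p:identification-subcategory}.

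The only difference is in (c). There the paper cites Buan--Zhou (their Proposition 4.8) to conclude that $N$ is tilting in $(D')^{\perp_{\mathbb{Z}}}$. You instead verify this directly from the torsion pair $(\thick D',\thick N)$: any $X\in(D')^{\perp_{\mathbb{Z}}}$ with $\Hom(N,X)=0=\Ext^1(N,X)$ lies in $\thick N\cap(\thick N)^\perp$, hence is zero. This is a valid, self-contained substitute for the citation.
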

\begin{proof}
(a) Let $A = \End_{\der^b(\mathcal{H})}(T)$, where $\mathcal{H}$ is a hereditary abelian category, and $T$ is a basic silting complex in $\der^b(\mathcal{H})$. Assume $T = U\oplus N$, and let $e=e_N$ be the idempotent corresponding to $N$.
We  first observe that $e_N(\End_{\der^b(\mathcal{H})} (T))e_N \cong \End_{\der^b(\mathcal{H})}(N)$. By Theorem \ref{t: perp-completion}, there exists a presilting complex $D$, such that $D \oplus N$ is a basic silting object and $N \in (\thick D)^\perp$. In addition, $N$ is a silting object in $(\thick D)^\perp$. According to Proposition \ref{p: perp-for-presilting}, $D^{\perp_{\mathbb{Z}}}$ is a hereditary abelian category. Furthermore, by  Proposition \ref{p:identification-subcategory}, we identify $(\thick D)^\perp$ with $\der^b(D^{\perp_{\mathbb{Z}}})$. Then $N$ is a basic silting complex in $\der^b(D^{\perp_{\mathbb{Z}}})$, and $\End_{\der^b(\mathcal{H})}(N) \cong \End_{\der^b(\mathcal{D^{\perp_{\mathbb{Z}}}})}(N)$.

(b) Assume $T$ is 2-term in $\der^b(\mathcal{H})$. Since $N$ is a direct summand of $T$, then $N$ is 2-term in $\der^b(\mathcal{H})$, and hence $N$ is 2-term in $\der^b(D^{\perp_{\mathbb{Z}}})$.

(c) Assume $T$ is tilting in $\mathcal{H}$. Then $N \in D^{\perp_{\mathbb{Z}}}$. By \cite[Proposition 4.8]{BZ16b}, $N$ is tilting in $D^{\perp_{\mathbb{Z}}}$.
\end{proof}

\begin{remark} \label{r: shod is idem-subalg closed} Let $A$ be a finite-dimensional algebra over an algebraically closed field, and $e\in A$ be an idempotent. By Theorem \ref{t: shod and quasisilted} and Theorem \ref{p: idempotent-subalg-closed} (b), if $A$ is shod, then $eAe$ is also shod. 
\end{remark}

\begin{remark}
The statements in parts (b) and (c) of Theorem \ref{p: idempotent-subalg-closed} were established in \cite{AC04}. Our method is different from the one in \cite{AC04}.
\end{remark}

\subsection{$\tau$-tilting reduction } \label{s:tau-reduction-closed} 
Let $A$ be a finite-dimensional $k$-algebra and $\mod A$ the category of finite dimensional right $A$-modules. Recall that $M\in \mod A$ is $\tau$-rigid if $\Hom_A(M,\tau M)=0$, where $\tau$ is the Auslander--Reiten translation functor. A $\tau$-rigid module $M\in \mod A$ is {\em $\tau$-tilting} if $|M|=|A|$. A $\tau$-rigid module $M\in \mod A$ is called a {\em support $\tau$-tilting} $A$-module, if there is an idempotent $e\in A$ such that $M$ is a $\tau$-tilting $A/\langle e\rangle$-module. Denote by $s\tau \text{-}\opname{tilt} A$ the set of isomorphism classes of basic support $\tau$-tilting $A$-modules.

Let $Z\in \mod A$ be a $\tau$-rigid $A$-module, then $^\perp (\tau Z):=\{X\in \mod A\mid \Hom_A(X,\tau Z)=0\}$ is a torsion class of $\mod A$. The {\em Bongartz completion} $U_Z:=P(^\perp(\tau Z))$ of $Z$ is the maximal basic direct sum of $\Ext$-projective objects in $^\perp(\tau Z)$, which is a $\tau$-tilting module. Let $B = \End_A (U_Z)$ and $C = B/ \langle e_Z \rangle$, where $e_Z$ is the idempotent of $B$ corresponding to the projective $B$-module $\Hom_A (U_Z, Z)$. The algebra $C$ is called the {\em $\tau$-tilting reduction} of $A$ with respect to $Z$, see \cite{J}.

 Before stating the main result of this subsection. Let us recall a result from \cite{J}.
  Let $\mathcal{T}$ be a $k$-linear $\Hom$-finite Krull-Schmidt triangulated category with a silting object $T$. Let $A = \End_{\mathcal{T}} (T)$, and denote \[\overline{(-)}=\Hom_\mathcal{T} (T, -) : \mathcal{T} \to \mod A.\] 
Let $D$ be a presilting object in $\mathcal{T}$ such that $D \in \add T * \add T[1]$. Let $f : D' \to T[1]$ be a minimal right $(\add D)$-approximation of $T[1]$. By embedding $f$ into a distinguished triangle, we obtain
\begin{equation}\label{tri:addT-addT[1]}
T  \to X_D \to D' \xra{f} T[1].  
\end{equation}
Then $T_D := X_D \oplus D$ lies in 2$_T$-$\opname{silt} \mathcal{T} := \{ T \in \opname{silt} \mathcal{T} ~|~  T \in \add T * \add (T[1]) \}$ (see \cite[Proposition 4.9]{J}). Jasso \cite{J} has established the following connection between silting reduction and $\tau$-tilting reduction.

\begin{theorem}\cite[Theorem 4.12 (a)]{J} \label{t: tau-tilt red and silt red} Let $D$ be a presilting object in $\mathcal{T}$ contained in $\add T * \add T[1]$. Then we have $\End_A (U_{\overline{D}}) / \langle e_{\overline{D}} \rangle \cong \End_{\mathcal{T}/\mathcal{S}} (T_D)$, where $\mathcal{S} = \thick D$ and $e_{\overline{D}}$ is the idempotent of $\End_A (U_{\overline{D}})$ corresponding to the projective $\End_A (U_{\overline{D}})$-module $\Hom_A (U_{\overline{D}}, \overline{D})$.
\end{theorem}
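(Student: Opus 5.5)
The plan is to reduce the statement to two facts. The first is the standard dictionary between 2-term silting objects and support $\tau$-tilting modules. The second is the general mechanism, already used in the proof of Theorem \ref{p: idempotent-factor-closed}, that identifies an idempotent quotient of the endomorphism algebra of a silting object with an endomorphism algebra in a Verdier quotient.

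\emph{Step 1 (the Bongartz completion).} First I will show that $\overline{T_D}\cong U_{\overline D}$, the Bongartz completion of $\overline{D}$. Recall that the functor $\overline{(-)}$ restricted to $\add T*\add T[1]$ induces an equivalence $(\add T*\add T[1])/[\add T[1]]\simeq \mod A$. Under this equivalence, presilting objects go to $\tau$-rigid modules and 2$_T$-silting objects go to support $\tau$-tilting modules (Adachi--Iyama--Reiten). The triangle \eqref{tri:addT-addT[1]} is exactly the silting-theoretic Bongartz construction: $f$ is a right $\add D$-approximation of $T[1]$. Hence $T_D$ is the maximal element of 2$_T$-$\opname{silt}\mathcal{T}$ containing $D$. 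Since the Aihara--Iyama order corresponds to inclusion of torsion classes $\Fac$, $\overline{T_D}$ is the support $\tau$-tilting module with the largest torsion class containing $\overline D$, namely ${}^\perp(\tau\overline D)$. Its Ext-projectives are $U_{\overline D}$. Because $T\in \add T_D\ast\add T_D[1]$ via the triangle, no summand of $T_D$ lies in $\add T[1]$. Therefore $\overline{T_D}$ is $\tau$-tilting and equals $U_{\overline D}$, with $\overline{D}$ corresponding to $e_{\overline D}$.

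\emph{Step 2 (comparing ideals).} From the equivalence we get
\[\End_A(U_{\overline D})\cong \End_{\mathcal T}(T_D)/[\add T[1]](T_D,T_D).\]
I will show that $[\add T[1]](T_D,T_D)\subseteq [\add D](T_D,T_D)$. It suffices to see that every morphism $T_D\to T[1]$ factors through $\add D$. For morphisms out of $D$ this is trivial. For morphisms out of $X_D$, apply $\Hom_{\mathcal T}(-,T[1])$ to \eqref{tri:addT-addT[1]}: since $\Hom(T,T[1])=0$, every map $X_D\to T[1]$ factors through $X_D\to D'$. It follows that
\[\End_A(U_{\overline D})/\langle e_{\overline D}\rangle\cong \End_{\mathcal T}(T_D)/\langle e_D\rangle.\]

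\emph{Step 3 (passage to the Verdier quotient).} I will repeat the proof of Theorem \ref{p: idempotent-factor-closed}(a) for a general $\mathcal{T}$. Lemma \ref{l: ideal of idempotent} gives $\langle e_D\rangle=[\thick D](T_D,T_D)$; its proof used only that $T_D$ is silting and Lemma \ref{lem:thick-subcat-as-extension}. Write $T_D=D\oplus X_D$ and take the triangle $S\to X_D\to S_{X}\to S[1]$ coming from a right $\mathcal S$-approximation, with $S\in\mathcal S_D^{\le0}$. Exactly as in that proof this gives
\[\End_{\mathcal T}(T_D)/\langle e_D\rangle\cong \End_{\mathcal T}(S_X)\cong \End_{\mathcal T/\mathcal S}(\mathbb L T_D).\]

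\emph{Main obstacle.} The delicate point is Step 3 in a general $\mathcal{T}$, because Lemma \ref{l:perp-quotient} needs $\mathcal S=\thick D$ to be functorially finite. Here that is not automatic, unlike in Lemma \ref{l:functorially-finite}. I would handle this as Iyama--Yang do. Rather than use $\mathcal S^\perp$, compute $\Hom_{\mathcal T/\mathcal S}(\mathbb L X_D,\mathbb L X_D)$ directly by a roof calculus. The roofs are controlled by the covariantly finite aisle $\mathcal S_D^{<0}$ and the right $\mathcal S_D^{\le0}$-approximation, which exists because $\mathcal S^{\le0}_D$ is generated by a presilting object in a Hom-finite category. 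This shows that every morphism in the quotient is represented by a morphism in $\mathcal T$, with kernel $[\mathcal S]$.
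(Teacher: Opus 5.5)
\textbf{Gap in Step 1.} You claim that no indecomposable summand of $T_D$ lies in $\add T[1]$, so that $\overline{T_D}$ is $\tau$-tilting and equals $U_{\overline D}$. This is false in the generality of the statement, because $D$ is itself a summand of $T_D$ and nothing prevents $D$ from having summands in $\add T[1]$.

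Concretely, let $T=P\oplus T''$ with $P\neq 0$, and take $D=P[1]$. Then $\overline D=0$ and $U_{\overline D}=A$, so the left-hand side of the statement is $A$. On the other hand, $X_D$ is the cone of a minimal right $\add P$-approximation $P''\to T''$. Hence $\overline{T_D}=A/\langle e_P\rangle$, and $\End_{\mathcal T/\mathcal S}(T_D)\cong A/\langle e_P\rangle$; your own Steps 2--3 give exactly this.

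So the quoted statement tacitly needs one of two fixes:
\begin{itemize}
\item assume $D$ has no direct summand in $\add T[1]$ (this is how $D$ must be chosen in Theorem~\ref{t: quasisilted is tau-tilting reduction closed}, corresponding to the $\tau$-rigid pair $(Z,0)$); or
\item read $U_{\overline D}$ as $\overline{T_D}$, the Bongartz completion of the $\tau$-rigid pair.
\end{itemize}
Your justification is also wrong. Rotating the triangle gives $T\in\add T_D[-1]*\add T_D$, not $\add T_D*\add T_D[1]$. Moreover, such a membership holds in the example above as well, so it cannot rule out summands in $\add T[1]$.

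Under that hypothesis, the missing argument is as follows. Suppose $P[1]$ were a summand of $X_D$ with $P\in\add T$ indecomposable. Since $T_D$ is presilting, $\Hom(P,T_D)\cong\Hom(P[1],T_D[1])=0$. Applying $\Hom(P,-)$ to $D'[-1]\to T\to X_D\to D'$ shows that the split monomorphism $P\to T$ factors through $D'[-1]$. Hence $P[1]\in\add D'\subseteq\add D$, a contradiction. With this, Step 1 follows from the bijection of Iyama--J{\o}rgensen--Yang together with Adachi--Iyama--Reiten's Bongartz completion. Step 2 is correct as written, and in fact it yields $\End_A(\overline{T_D})/\langle e_{\overline D}\rangle\cong\End_{\mathcal T}(T_D)/[\add D](T_D,T_D)$ with no hypothesis at all.

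\textbf{On Step 3.} The paper gives no proof of this statement; it quotes it from Jasso. In its only application, $\mathcal T=\der^b(\mathcal H)$, Lemma~\ref{l:functorially-finite} makes $\thick D$ functorially finite, so the proof of Theorem~\ref{p: idempotent-factor-closed}(a) applies verbatim. In general you are right that $\thick D$ need not be functorially finite. However, your roof-calculus sketch is only an outline of Iyama--Yang's silting reduction theorem, and it is cleaner to cite that theorem directly:
\begin{itemize}
\item take $\mathcal P=\add D$;
\item condition (P1) holds by Hom-finiteness;
\item condition (P2) holds because $\mathcal T=\thick T$ and $D\in\add T*\add T[1]$;
\item the subcategory $\mathcal Z$ of objects $X$ with $\Hom(X,D[>0])=0=\Hom(D,X[>0])$ contains $T_D$;
\item the theorem gives $\mathcal Z/[\add D]\simeq\mathcal T/\mathcal S$, hence $\End_{\mathcal T/\mathcal S}(T_D)\cong\End_{\mathcal T}(T_D)/[\add D](T_D,T_D)$.
\end{itemize}
Combined with Step 2, this finishes the proof.
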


Now we are in a position to state the main result of this subsection.
\begin{theorem} \label{t: quasisilted is tau-tilting reduction closed}Let $A$ be the endomorphism algebra of a basic silting complex over a hereditary abelian category, and $Z$ be a $\tau$-rigid $A$-module.
\begin{itemize}
    \item[(a)] The $\tau$-tilting reduction of $A$ with respect to $Z$ is  isomorphic to the endomorphism algebra of a basic silting complex over some hereditary abelian category.
    \item[(b)] If $A$ is quasi-silted, then the $\tau$-tilting reduction of $A$ with respect to $Z$ remains a quasi-silted algebra.
\end{itemize}
\end{theorem}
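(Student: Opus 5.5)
We reduce directly to the situation of Theorem \ref{t: tau-tilt red and silt red} and then reuse the argument of Theorem \ref{p: idempotent-factor-closed}.

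\textbf{Setup.} Let $A=\End_{\der^b(\mathcal{H})}(T)$ for a basic silting complex $T$ over a hereditary abelian category $\mathcal{H}$. Put $\mathcal{T}=\der^b(\mathcal{H})$ and $\overline{(-)}=\Hom_{\mathcal{T}}(T,-)$. By the standard correspondence between $2_T$-silting objects and support $\tau$-tilting $A$-modules (\cite{AIR}, \cite{J}), the functor $\overline{(-)}$ induces a bijection between basic presilting objects in $\add T*\add T[1]$ without summands in $\add T[1]$ and basic $\tau$-rigid $A$-modules. So for the given $\tau$-rigid module $Z$ we choose a presilting $D\in\add T*\add T[1]$ with $\overline{D}\cong Z$. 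Concretely, take a minimal projective presentation $P_1\xrightarrow{p}P_0\to Z\to 0$; lift $p$ to a morphism $T_1\to T_0$ in $\add T$ and let $D$ be its cone shifted suitably. Then $\tau$-rigidity of $Z$ gives $\Hom(D,D[1])=0$, hence $D$ is presilting. Theorem \ref{t: tau-tilt red and silt red} then yields
\[C:=\End_A(U_Z)/\langle e_Z\rangle\cong \End_{\mathcal{T}/\mathcal{S}}(T_D),\qquad \mathcal{S}=\thick D.\]

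\textbf{Part (a).} By Proposition \ref{p: quotient-by-presilting}, $\mathcal{T}/\mathcal{S}\simeq\der^b(D^{\perp_{\mathbb{Z}}})$, and $D^{\perp_{\mathbb{Z}}}$ is hereditary abelian by Proposition \ref{p: perp-for-presilting}. Since $T_D\in 2_T\text{-}\opname{silt}\mathcal{T}$ contains $D$ as a summand, Theorem \ref{t:silting-reduction} shows that $\mathbb{L}(T_D)$ is silting in $\mathcal{T}/\mathcal{S}$. It may fail to be basic, so we replace it by its basic part. This could change the algebra only up to Morita equivalence; but Theorem \ref{t:silting-reduction} is a bijection of basic objects, and $\mathbb{L}(T_D)=\mathbb{L}(X_D)$ corresponds to the basic $T_D$, so we expect it to be basic already. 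Hence $C\in\mathcal{E}$. Alternatively, write $T_D=D\oplus X$ and invoke Theorem \ref{p: idempotent-factor-closed}(a) directly. By Lemma \ref{l: ideal of idempotent}, $\End(T_D)/\langle e_D\rangle\cong\End_{\mathcal{T}/\mathcal{S}}(\mathbb{L}T_D)$, so $C$ is an idempotent quotient of $\End_{\mathcal{T}}(T_D)$. The latter lies in $\mathcal{E}$ because $T_D$ is a basic silting complex over $\mathcal{H}$. This second route is cleaner and we will use it.

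\textbf{Part (b).} If $T$ is 2-term over $\mathcal{H}$, we show $T_D$ is 2-term as well and then apply Theorem \ref{p: idempotent-factor-closed}(b). We have $T_D\in\add T*\add T[1]$, so for $M\in\mathcal{H}$ there is a triangle $T'\to T_D\to T''[1]\to T'[1]$ with $T',T''\in\add T$. Applying $\Hom(-,M[i])$ gives an exact sequence
\[\Hom(T''[1],M[i])\to\Hom(T_D,M[i])\to\Hom(T',M[i]).\]
The outer terms are $\Hom(T'',M[i-1])$ and $\Hom(T',M[i])$. These vanish only for $i\notin\{0,1,2\}$, so $T_D$ could have $\Hom(T_D,M[2])\neq0$.

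\textbf{Main obstacle.} The difficulty is exactly this failure of two-termness: $T_D$ is only three-term, so Theorem \ref{p: idempotent-factor-closed}(b) does not apply verbatim. The fix I will try is to bypass $T_D$ and work with $S_{X}\in\mathcal{S}^\perp$, the reflection of $X_D$ constructed as in the proof of Theorem \ref{p: idempotent-factor-closed}. Its endomorphism algebra is $C$. For $M\in D^{\perp_{\mathbb{Z}}}$, the triangle $S\to X_D\to S_X\to S[1]$ with $S\in\mathcal{S}$ gives $\Hom(S_X,M[i])\cong\Hom(X_D,M[i])$. So it suffices to show $\Hom(X_D,M[2])=0$ for such $M$.

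To prove this, use the triangle \eqref{tri:addT-addT[1]}, namely $T\to X_D\to D'\to T[1]$. It yields
\[\Hom(D',M[2])\to\Hom(X_D,M[2])\to\Hom(T,M[2])=0.\]
Since $D'\in\add D$ and $M\in D^{\perp_{\mathbb{Z}}}$, the left term vanishes as well. Therefore $\Hom(X_D,M[i])=0$ for all $i\neq0,1$, so $S_X$ is a 2-term silting complex over $D^{\perp_{\mathbb{Z}}}$, and $C$ is quasi-silted.
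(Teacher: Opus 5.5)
Your proposal is correct and is in substance the paper's proof. Part (a) goes through Theorem~\ref{t: tau-tilt red and silt red} and the identification $\der^b(\mathcal{H})/\thick D\simeq\der^b(D^{\perp_{\mathbb{Z}}})$; routing it through Theorem~\ref{p: idempotent-factor-closed}(a) is the same computation, and basicness is automatic because $C$ is a quotient of the basic algebra $\End_A(U_Z)$. Part (b) reflects into $\mathcal{S}^\perp$ and checks $\Hom(-,M[i])=0$ for $M\in D^{\perp_{\mathbb{Z}}}$ and $i\neq 0,1$.

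The only difference is which object gets reflected. The paper reflects $T$ itself, using $\mathbb{L}(T_D)\cong\mathbb{L}(T)$, which comes from the triangle $T\to X_D\to D'\to T[1]$. You reflect $X_D$ and use that same triangle to inherit the vanishing from $T$, so in fact $S_X\cong S_T$. Your detour through the possibly three-term $T_D$ is therefore unnecessary, but it does no harm.
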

\begin{proof}
Let $\mathcal{H}$ be a hereditary abelian category and $T\in \der^b(\mathcal{H})$ a basic silting object with $A\cong \End_{\der^b{\mathcal{H}}}(T)$.
 According to \cite[Theorem 4.5]{IJY}, there is a bijection \[2_T\text{-}\opname{silt} \der^b(\mathcal{H}) \leftrightarrow s\tau \text{-}\opname{tilt} A\] induced by $\overline{(-)} = \Hom_{\der^b(\mathcal{H})} (T, -)$. Thus, there is a presilting complex $D$ in $\der^b(\mathcal{H})$ contained in $\add T * \add T [1]$ such that $Z = \overline{D}$. By Theorem \ref{t: tau-tilt red and silt red}, we have $\End_A (U_{Z}) / \langle e_Z \rangle \cong \End_{\der^b(\mathcal{H})/\mathcal{S}} (T_D)$, where $\mathcal{S} = \thick D$. According to Proposition \ref{p: quotient-by-presilting}, we have $\der^b(D^{\perp_{\mathbb{Z}}})\cong \der^b(\mathcal{H})/\mathcal{S}$.  On the other hand, by the definition of $T_D$, $T_D\cong T$ in $\der^b(\mathcal{H})/\mathcal{S}$. It follows that $T_D$ is a basic silting object of $\der^b(\mathcal{H})/\mathcal{S}$ by Theorem \ref{t:silting-reduction}. Consequently, $\End_{\der^b(\mathcal{H})/\mathcal{S}}(T_D)$ is isomorphic to the endomorphism algebra of a basic silting complex over $D^{\perp_{\mathbb{Z}}}$. This finishes the proof of $(a)$.

To establish part (b), we further assume that $T$ is a 2-term silting complex in $\der^b(\mathcal{H})$. 
Consider the following triangle in $\der^b(\mathcal{H})$
\[
S\xrightarrow{f} T\to S_T\to S[1],
\]
where $f$ is a minimal right $\mathcal{S}$-approximation of $T$. We have $S_T\in \mathcal{S}^\perp$.
We identify $\mathcal{S}^\perp$ with $\der^b(D^{\perp_\mathbb{Z}})$ by Proposition \ref{p:identification-subcategory}. It follows that 
\[
\End_{\der^b(\mathcal{H})/\mathcal{S}}(T_D)\cong \End_{\der^b(\mathcal{H})/\mathcal{S}}(T)\cong \End_{\der^b(D^{\perp_\mathbb{Z}})}(S_T).
\]
It suffices to show that $S_T$ is a 2-term silting complex in $\der^b(D^{\perp_\mathbb{Z}})$.

Note that $S_T \cong T$ is silting in $\der^b(\mathcal{H})/\mathcal{S}$ by Theorem \ref{t:silting-reduction}. Then $S_T$ is silting in $\der^b(D^{\perp_\mathbb{Z}})$. Let $M\in D^{\perp_\mathbb{Z}}$ and $i\neq 0,1$. 
 Applying $\Hom_{\der^b(\mathcal{H})} (-, M[i])$ to the above triangle, we obtain an exact sequence
\begin{align*}
0 \overset{M \in D^{\perp_{\mathbb{Z}}}}{=} \Hom_{\der^b(\mathcal{H})} (S, M[i - 1]) &\to \Hom_{\der^b(\mathcal{H})} (S_T, M[i])\\
 &\to \Hom_{\der^b(\mathcal{H})} (T, M[i]) \overset{T~\text{is 2-term}}{=} 0.
\end{align*}
It follows that $\Hom_{\der^b(D^{\perp_\mathbb{Z}})} (S_T, M[i])=\Hom_{\der^b(\mathcal{H})} (S_T, M[i])=0$ for $i\neq 0,1$. Hence $S_N$ is a 2-term silting complex over $D^{\perp_\mathbb{Z}}$. This completes the proof.

\end{proof}

\begin{remark} \label{r: shod is tau-tilting reduction closed} Let $A$ be a finite-dimensional algebra over an algebraically closed field, and $Z$ be a $\tau$-rigid $A$-module. By Theorem \ref{t: shod and quasisilted} and Theorem \ref{t: quasisilted is tau-tilting reduction closed} (b), if $A$ is shod, then the $\tau$-tilting reduction $\End_A (U_Z) / \langle e_Z \rangle$ is also a shod algebra.
\end{remark}

\begin{remark} \label{c: tau-tilted from shod} For a shod algebra $A$, and a $\tau$-rigid $A$-module $Z$, the endomorphism algebra $\End_A (U_Z)$ may not  be shod, see Example \ref{e: tau-tilted alg from shod is not shod}.
\end{remark}

\section{Idempotent quotients for several classes of algebras} \label{s: Quotient-closed property by idempotents} This section is devoted to proving the following result.

\begin{theorem}\label{t: quotient-closed}
Let $A$ be a finite-dimensional $k$-algebra and $e\in A$ an idempotent.
\begin{enumerate}
\item If $A$ is a laura algebra, then so is $A / \langle e \rangle$.
\item If $A$ is a right (or left) glued algebra, then so is $A / \langle e \rangle$.
\item If $A$ is a weakly shod algebra, then so is $A / \langle e \rangle$.
\item If $A$ is a shod algebra, then so is $A / \langle e \rangle$.
\end{enumerate}
\end{theorem}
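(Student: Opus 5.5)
Set $B = A/\langle e\rangle$. We view $\mod B$ as the full subcategory of $\mod A$ of modules annihilated by $e$, that is, modules $M$ with $Me = 0$. This subcategory is closed under submodules, quotients and extensions, and $\ind B\subseteq \ind A$. Each of the four classes is defined by conditions on the left part $\mathcal{L}_A$, the right part $\mathcal{R}_A$, paths in $\ind A$, and projective and injective dimensions. Recall:
\[
\mathcal{L}_A=\{M\in\ind A \mid \pd_A L\le 1 \text{ for every predecessor } L \text{ of } M\},
\]
with $\mathcal{R}_A$ defined dually. The plan is to show that these invariants pass from $A$ to $B$ in the favourable direction. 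Concretely, the key step is
\[
\mathcal{L}_A\cap \ind B\subseteq \mathcal{L}_B \quad\text{and}\quad \mathcal{R}_A\cap\ind B\subseteq \mathcal{R}_B .
\]
Once this is in place, the definitions give the theorem. For laura, $\ind A\setminus(\mathcal{L}_A\cup\mathcal{R}_A)$ is finite, hence so is $\ind B\setminus(\mathcal{L}_B\cup\mathcal{R}_B)$. For glued, $\ind A\setminus\mathcal{R}_A$ (or $\ind A\setminus\mathcal{L}_A$) is finite, and the same finiteness descends to $B$. For weakly shod, paths from an indecomposable injective to an indecomposable projective have bounded length. Since $\ind B\subseteq\ind A$, a path in $\ind B$ is also a path in $\ind A$. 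The only further point is to compare injectives and projectives of $B$ with modules of $A$, which is done in the next paragraph. Part (4) is already Remark \ref{r: shod is idem-quotient closed} when $k$ is algebraically closed. For general $k$ we use the same inclusion argument, since $A$ is shod if and only if $\ind A=\mathcal{L}_A\cup\mathcal{R}_A$.

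For the key inclusion, let $M\in\ind B$ with $M\in\mathcal{L}_A$, and let $L\rightsquigarrow M$ be a path in $\ind B$. Every arrow in such a path is a nonzero map between indecomposable $B$-modules, hence a nonzero $A$-map, so $L$ is a predecessor of $M$ in $\ind A$. Therefore $\pd_A L\le1$. It remains to prove the following claim.

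\emph{Claim.} If $L\in\mod B$ and $\pd_A L\le 1$, then $\pd_B L\le1$.

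We use the characterization $\pd_A L\le 1$ if and only if $\Hom_A(\Du A,\tau_A L)=0$, together with the formula $\tau_B L\subseteq \tau_A L$. For an idempotent quotient, $\tau_B L$ is a submodule of $\tau_A L$, indeed the largest one annihilated by $e$ (Auslander–Smalø style). Given a nonzero map $\Du B\to\tau_B L$, compose it with the inclusion $\tau_B L\subseteq\tau_A L$ and precompose with the epimorphism $\Du A\to \Du B$. The result is a nonzero map $\Du A\to\tau_A L$, a contradiction. The dual argument gives $\id_B\le1$ from $\id_A\le1$, using $\tau^{-1}_B L$ as a quotient of $\tau^{-1}_A L$.

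For weakly shod we must also compare injectives and projectives. Each indecomposable injective $B$-module embeds in an indecomposable injective $A$-module, since $\Du B\subseteq \Du A$. Each indecomposable projective $B$-module $P_B(i)=P_A(i)/P_A(i)eA$ is a quotient of $P_A(i)$. So a path $I_B\rightsquigarrow P_B$ in $\ind B$ extends to a path $I_A\to I_B\rightsquigarrow P_B\leftarrow P_A$. This needs care, because the last map points the wrong way. Instead we use the characterization of weakly shod via bounded lengths of paths from an injective to a projective, or equivalently the existence of a bound on paths leaving $\mathcal{L}$ and entering $\mathcal{R}$. We then rerun the $\mathcal{L}/\mathcal{R}$ argument above: a non-directing-free path in $B$ of length exceeding the $A$-bound yields modules outside $\mathcal{L}_A\cup\mathcal{R}_A$ in the right positions.

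The main obstacle is the claim $\tau_B L\hookrightarrow\tau_A L$ for general idempotents, together with the weakly shod bookkeeping. For the claim, we first try the explicit construction of $\tau$ through minimal projective presentations, comparing $\nu_B$ with $\nu_A$. If that becomes messy, we fall back on the homological criterion $\pd_B L\le1 \iff \Ext^2_B(L,-)=0$. In that approach we try to show $\Ext^2_B(L,N)\hookrightarrow\Ext^2_A(L,N)$ for $B$-modules $N$, by applying $-\otimes_A B$ to the $A$-projective resolution of $L$.
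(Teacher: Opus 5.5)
Your reduction is the paper's. Everything rests on the implication $\pd_A L\le 1\Rightarrow\pd_B L\le 1$ for $L\in\mod B$ (and its dual), which gives $\mathcal{L}_A\cap\ind B\subseteq\mathcal{L}_B$ and $\mathcal{R}_A\cap\ind B\subseteq\mathcal{R}_B$. Laura, glued, shod and weakly shod then follow as in the paper. For weakly shod this goes via Proposition \ref{p:weakly shod}: a path in $\ind B$ from $X\notin\mathcal{L}_B$ to $Y\notin\mathcal{R}_B$ has $X\notin\mathcal{L}_A$ and $Y\notin\mathcal{R}_A$ (not ``outside $\mathcal{L}_A\cup\mathcal{R}_A$''), and it is a path in $\ind A$.

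\textbf{The gap is in your proof of the Claim.} The inclusion $\tau_B L\hookrightarrow\tau_A L$ is the harmless part. It holds for every quotient $A/I$: tensoring a minimal $A$-presentation of $L$ with $A/I$ shows that $\tau_{A/I}L$ is a direct summand of the largest submodule of $\tau_A L$ annihilated by $I$.

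What fails is the ``epimorphism $\Du A\to\Du B$''. Dualizing $A\twoheadrightarrow B$ gives a monomorphism $\Du B\hookrightarrow\Du A$, and a nonzero map $\Du B\to\tau_A L$ has no reason to extend to $\Du A$. In general $\Du B$ is not even a quotient of an injective $A$-module. Take $A$ hereditary of type $\mathbb{A}_3$ with orientation $1\leftarrow 2\leftarrow 3$ and $e=e_2$. The simple projective $S(1)$ is an injective $B$-module, but quotients of injectives over a hereditary algebra are injective, and $S(1)$ is not. Your dual argument breaks the same way, since it would need $B\hookrightarrow A$.

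The telltale sign is that nothing in your argument uses that the ideal is generated by an idempotent. It would therefore prove the Claim for every ideal, which is false: $A(n,2)$ is hereditary, while $A(n,n)=A(n,2)/(\opname{rad}A(n,2))^2$ has global dimension $n-1$ (Proposition \ref{p: iterative tilt}).

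\textbf{The missing idea is precisely where idempotency enters.} Your fallback (tensor an $A$-projective resolution with $B$) points the right way, but you must supply the vanishing that makes it work.
\begin{enumerate}
\item Take $0\to X\to P\to L\to 0$ with $P$ projective over $A$. If $\pd_A L\le 1$, then $X$ is projective.
\item Since $Le=0$, we have $Pe\subseteq X$. Hence $PeA=(Pe)eA\subseteq XeA\subseteq PeA$, so $X\cap PeA=XeA$. Equivalently, $\operatorname{Tor}_1^A(L,B)\cong L\otimes_A AeA=0$.
\item Therefore $0\to X/XeA\to P/PeA\to L\to 0$ is exact, and it is a projective resolution of $L$ over $B$. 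So $\pd_B L\le 1$.
\item The injective case follows by duality, as in Proposition \ref{p:inj-dim}.
\end{enumerate}
This is the content of the paper's Proposition \ref{p:proj-dim}. There, $X/XeA\cong L$ comes from the canonical sequence of the torsion pair $(\Fac eA,(eA)^\perp)$, and projective $B$-modules are identified with the $\Ext$-projectives $P/PeA$ via Lemma \ref{l:Ext-proj}. With this replacing the $\tau$-argument, the rest of your plan goes through.
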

\begin{remark} 
Theorem \ref{t: quotient-closed} (4) also follows from Theorem \ref{p: idempotent-factor-closed} (b).
\end{remark}
\begin{remark} \label{c: tilted and quasitilted is not closed under quotient} 
It is worth noting that while the classes of shod, laura, and weakly shod algebras are closed under taking arbitrary idempotent quotients, the same does not hold for tilted or quasi-tilted algebras.
As evidenced by the example in \cite[Section 2.5]{BMR08}, a tilted algebra can yield a quotient that is strictly shod (i.e., shod but not quasi-tilted).

\end{remark}
Let $A$ be an Artin algebra.
Given indecomposable $A$-modules $M$ and $N$, a {\em path} from $M$ to $N$ is a sequence of morphisms
\[ M = X_0 \xra{f_1} X_1 \xra{f_2} X_2 \to \cdots \xra{f_s} X_s = N
\]
where each $X_i\in \ind A$ and each $f_i$ is a nonzero non-isomorphism. The integer $s$ is the \emph{length} of this path. Following Happel, Reiten, and Smal{\o} \cite{HRS}, we define two full subcategories of $\ind A$
\[ \mathcal{L}_A = \{ X \in \ind A ~|~ \text{for any path from } ~W~ \text{to} ~X, ~\text{we have}~ \pd_A W \leq 1 \},
\]
\[ \mathcal{R}_A = \{ Y \in \ind A ~|~ \text{for any path from } ~Y~ \text{to} ~W, ~\text{we have}~ \id_A W \leq 1 \}.
\]
We recall that
\begin{itemize}
    \item $A$ is called \emph{laura} if $\mathcal{L}_A \cup \mathcal{R}_A$ is cofinite in $\ind A$, that is, there are only finitely many modules in $\ind A - \mathcal{L}_A \cup \mathcal{R}_A$; 
    \item $A$ is called \emph{right glued (resp. left glued)} if $\mathcal{L}_A$ (resp. $\mathcal{R}_A$) is cofinite in $\ind A$ (see \cite[Lemma 3.1]{AC04}); 
    \item $A$ is called \emph{weakly shod} if the length of any path from an indcomposable injective module to an indcomposable projective module is bounded.
\end{itemize}


\begin{proposition}\cite[Proposition 1.3]{AC04} \label{p:weakly shod}
An Artin algebra is weakly shod if and only if there exists an $l \geq 0$ such that any path from an indecomposable module not lying in $\mathcal{L}_A$ to an indecomposable module not lying in $\mathcal{R}_A$ has length at most $l$.
\end{proposition}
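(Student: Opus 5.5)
The proof has two directions. The main tool is the Auslander--Reiten characterisation of projective and injective dimension: $\pd_A W\le 1$ if and only if $\Hom_A(\Du A,\tau W)=0$, and dually $\id_A W\le 1$ if and only if $\Hom_A(\tau^{-1}W,A)=0$. I will also use that $\mathcal{L}_A$ is closed under predecessors and $\mathcal{R}_A$ under successors; both follow directly from the definitions, since a path into a predecessor of $X$ extends to a path into $X$.

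\emph{Weakly shod $\Rightarrow$ bounded paths.} Suppose every path from an indecomposable injective to an indecomposable projective has length at most $m$.
\begin{enumerate}
\item Let $X\notin\mathcal{L}_A$. Then there is a path $W\rightsquigarrow X$ with $\pd_A W\ge 2$. Hence $W$ is non-projective and there is an indecomposable injective $I$ with a nonzero map $I\to\tau W$. This map is a non-isomorphism, since $\tau W$ is not injective.
\item Let $\tau W\to E\to W$ be the almost split sequence. Choosing an indecomposable summand $E'$ of $E$ with nonzero irreducible components gives a path $I\to\tau W\to E'\to W\rightsquigarrow X$ of length at least $3$. (If the composite $\tau W\to E'\to W$ is zero, that is harmless: a path only needs each arrow nonzero.)
\item Dually, for $Y\notin\mathcal{R}_A$ there is a path $Y\rightsquigarrow W'\to E''\to\tau^{-1}W'\to P$ with $P$ indecomposable projective.
\item Concatenating, any path $X\rightsquigarrow Y$ of length $s$ yields an injective-to-projective path of length at least $s+6$. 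Therefore $s\le m-6$, and we may take $l=\max(m-6,0)$.
\end{enumerate}

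\emph{Bounded paths $\Rightarrow$ weakly shod.} Fix $l$ as in the statement and a path $I=X_0\to\cdots\to X_s=P$ from an indecomposable injective to an indecomposable projective.
\begin{enumerate}
\item By the closure properties, the indices $i$ with $X_i\in\mathcal{L}_A$ form an initial segment $[0,a]$, and those with $X_i\in\mathcal{R}_A$ form a final segment $[b,s]$.
\item The modules $X_{a+1},\dots,X_{b-1}$ lie outside $\mathcal{L}_A\cup\mathcal{R}_A$. So the subpath from $X_{a+1}$ to $X_{b-1}$ goes from a module not in $\mathcal{L}_A$ to a module not in $\mathcal{R}_A$, and has length at most $l$.
\item It remains to bound $a$ and $s-b$, i.e.\ to show that the path cannot stay long inside $\mathcal{L}_A$ (and, dually, inside $\mathcal{R}_A$). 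I would aim for the following dichotomy.
\begin{itemize}
\item If some $X_i$ with $i\ge 1$ lies outside $\mathcal{L}_A$, then $X_1\notin\mathcal{L}_A$. The expected mechanism is that the injective $I$ itself has $\pd I\ge 2$, or that a map from $I$ can be rerouted into $\tau X_j$ for some $j$.
\item If instead $P\in\mathcal{L}_A$, then every $X_i$ has projective dimension at most $1$. In that case I would use $\Hom_A(\Du A,\tau X_i)=0$ for all $i$, together with the fact that $I\to X_1$ is a nonzero non-isomorphism, to show that such a path has length bounded by a constant depending only on $A$.
\end{itemize}
\end{enumerate}

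I expect step 3 of this second direction to be the main obstacle. I would first try to prove that a path from an injective to a projective lying entirely in $\mathcal{L}_A$, or entirely in $\mathcal{R}_A$, cannot have arbitrary length. The approach would be to pass from the last module of such a path to the projective $P$, use that modules of projective dimension at most $1$ which are successors of an injective satisfy $\Hom(\Du A,\tau-)=0$, and then invoke the finiteness of the number of indecomposable projectives and injectives to cap the length. Should this fail, I would fall back on the Assem--Coelho argument of \cite{AC04} and quote it.
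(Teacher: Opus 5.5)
The paper gives no proof of this statement; it quotes it from \cite{AC04}. Your forward implication is correct and complete. From $\pd_A W\ge 2$ you get a nonzero non-isomorphism from an indecomposable injective into $\tau W$, the almost split sequence supplies $\tau W\to E'\to W$, the other end is handled dually, and concatenation gives $s+6\le m$. Steps 1 and 2 of the converse are also fine. Step 3, however, is not proved, and the first half of your dichotomy is false.

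Closure of $\mathcal{L}_A$ under predecessors gives only $X_1\notin\mathcal{L}_A\Rightarrow X_i\notin\mathcal{L}_A$ for $i\ge 1$, not the converse you need. Take $A(5,4)$ from Section~\ref{s:examples} and the path
\[
I(1)=P(2)\to S(2)\to P(3)\to S(3)\to P(4)\to P(5),
\]
which runs from an injective to a projective.
\begin{itemize}
\item The modules $P(2)$, $S(2)$, $P(3)$ lie in $\mathcal{L}_A$. Their predecessors are $S(1),P(2),S(2),P(3)$, all of projective dimension at most $1$.
\item $S(3)\notin\mathcal{L}_A$, since $\pd S(3)=2$.
\item $\pd I(1)=0$, so the injective itself does not have large projective dimension.
\end{itemize}
The nonzero map $I(1)\to S(2)=\tau S(3)$ is exactly your ``rerouting''. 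But it only certifies that the path leaves $\mathcal{L}_A$ at the first hook, not at $X_1$. So what must be bounded is the initial stretch of an injective-to-projective path lying inside $\mathcal{L}_A$, and dually the final stretch inside $\mathcal{R}_A$. Your hypothesis, which concerns paths from $\ind A\setminus\mathcal{L}_A$ to $\ind A\setminus\mathcal{R}_A$, says nothing about these stretches.

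The second half of the dichotomy does not close the gap either. The vanishing $\Hom_A(\Du A,\tau X_i)=0$ only forbids a single map from an injective into $\tau X_i$; it does not cap path length. A nonzero map in $\opname{rad}^\infty$ refines into arbitrarily long paths, and sectional stretches can be long. Finiteness of the number of indecomposable projectives and injectives says nothing about the modules in between.

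The quasi-tilted case shows that genuinely new input is required. There $\ind A=\mathcal{L}_A\cup\mathcal{R}_A$, and $\mathcal{R}_A$ is closed under successors. Hence there is no path at all from a module outside $\mathcal{L}_A$ to one outside $\mathcal{R}_A$, and your hypothesis holds vacuously. The converse then says exactly that quasi-tilted algebras are weakly shod. That is a real theorem (quasi-tilted $\Rightarrow$ shod $\Rightarrow$ weakly shod), proved through the analysis of paths from injectives to projectives: refinement into irreducible maps, hooks, and nonvanishing of composites along sectional paths. It is not a formal consequence of the definitions of $\mathcal{L}_A$, $\mathcal{R}_A$ and the $\tau$-criteria for $\pd\le 1$ and $\id\le 1$.

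An argument of this kind, bounding the $\mathcal{L}_A$- and $\mathcal{R}_A$-stretches, is the missing step. Falling back on \cite{AC04} is what the paper does, but then your proposal proves only the forward implication.
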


Henceforth, we assume that $A$ is a finite-dimensional $k$-algebra.

\begin{lemma}\label{l:canonical-sequence}
Let $e\in A$ be an idempotent. For any right $A$-module $M$, there exists a short exact sequence $0\to MeA\to M \to M/MeA\to 0$, where $MeA \in \Fac eA$ and $M/MeA \in (eA)^\perp$.
\end{lemma}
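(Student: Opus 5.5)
The sequence will be the trace of $eA$ in $M$, and the plan has three short steps.

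First, for a right $A$-module $M$, set $MeA$ to be the submodule of $M$ generated by $Me$, namely the set of finite sums $\sum m_i e a_i$. It is clearly an $A$-submodule, so we have the canonical short exact sequence $0\to MeA\to M\to M/MeA\to 0$. Everything then reduces to identifying the two end terms.

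Second, to see $MeA\in\Fac eA$, we use the isomorphism $\Hom_A(eA,M)\cong Me$, given by $\varphi\mapsto\varphi(e)$. Choose a $k$-basis $m_1e,\dots,m_te$ of the finite-dimensional space $Me$. The map $(eA)^{t}\to M$ sending $(ea_1,\dots,ea_t)$ to $\sum m_iea_i$ is a homomorphism, and its image is exactly $MeA$. Hence $MeA$ is a quotient of a finite direct sum of copies of $eA$. In other words, $MeA$ is the trace of $eA$ in $M$.

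Third, to see $M/MeA\in(eA)^\perp$, we again use $\Hom_A(eA,M/MeA)\cong (M/MeA)e$. Any element of $(M/MeA)e$ has the form $\bar m e=\overline{me}$, and $me=mee\in MeA$. So $(M/MeA)e=0$, and therefore $\Hom_A(eA,M/MeA)=0$.

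None of these steps presents a real obstacle. The only point needing care is the Yoneda-type identification $\Hom_A(eA,X)\cong Xe$ for right modules, which follows because $eA$ is generated by $e$ with $e=e^2$.
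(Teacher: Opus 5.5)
Your proof is correct. It shares one ingredient with the paper's proof, namely the explicit map $(eA)^t\to M$ built from a basis of $Me$, whose image is $MeA$. The two proofs differ in how they show $M/MeA\in(eA)^\perp$.

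The paper starts from an arbitrary right $\add(eA)$-approximation $f\colon Q\to M$. It proves $M/\im f\in(eA)^\perp$ by lifting a map $eA\to M/\im f$ first along $M\to M/\im f$ (projectivity of $eA$) and then through $f$ (the approximation property). Only afterwards does it identify $\im f$ with $MeA$. For that it needs two facts: the image of a right approximation does not depend on the choice, and the explicit map $(eA)^d\to M$ is itself an approximation.

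You instead define $MeA$ directly and use the explicit map only to get $MeA\in\Fac eA$. You then obtain orthogonality from the one-line computation $(M/MeA)e=0$ together with $\Hom_A(eA,X)\cong Xe$, which replaces the lifting argument.

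Your route is shorter and avoids the independence-of-approximation remark. It also makes visible that $(eA)^\perp$ consists exactly of the modules $X$ with $Xe=0$, i.e.\ the $A/\langle e\rangle$-modules. That is the identification the paper uses without comment in Proposition~\ref{p:proj-dim}.

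The paper's route has its own advantage: it exhibits the sequence as coming from a right $\add(eA)$-approximation. This is the general mechanism behind the torsion pair $(\Fac P,P^\perp)$ for any projective $P$. It is also the form in which the lemma is used in Remark~\ref{rmk:canonical-sequence} and Lemma~\ref{l:Ext-proj}.
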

\begin{proof}
Consider a right $(\add eA)$-approximation of $M$, denoted by $f: Q\to M$. This induces the short exact sequence:
\[0\to \im f \to M\xrightarrow{g} M/\im f\to 0,\] where it is immediate that $\im f \in \Fac eA$. Let $h: eA \to M/\im f$ be any morphism. Since $eA$ is projective, $h$ factors through $g$, say $h = gu$ for some $u: eA \to M$. Furthermore, as $f$ is a right $(\add eA)$-approximation, the morphism $u$ factors through $f$, 
implying $u = fv$ for some $v: eA \to Q$. Hence $h = gu = gfv = 0$. This means $M/\im f \in (eA)^\perp$.

We now verify that $\im f = MeA$. First, observe that the image of a right $(\add eA)$-approximation is independent of the specific choice of $f$. To construct an explicit approximation, let $\{x_1e, \dots, x_de\}$ be a $k$-basis for the vector space $Me$. For each $1 \leq i \leq d$, let $\theta_i: eA \to M$ denote the right $A$-morphism defined by left multiplication: $ea \mapsto x_iea$. By the universal property of direct sums, these maps induce a morphism $\phi: (eA)^d \to M$. Under the canonical isomorphism $Me \cong \operatorname{Hom}_A(eA, M)$, the set $\{\theta_1, \dots, \theta_d\}$ forms a $k$-basis for $\operatorname{Hom}_A(eA, M)$. Consequently, $\phi$ constitutes a right $(\add eA)$-approximation of $M$, and it follows that $\im f = \im \phi$. To show the equality of submodules, note that for any element $(ea_1, \dots, ea_d) \in (eA)^d$, we have:$$ \phi(ea_1, \dots, ea_d) = \sum_{i=1}^d x_iea_i \in MeA, $$ which implies $\im \phi \subseteq MeA$. Conversely, $MeA$ is generated by elements of the form $xea$ where $x \in M$ and $a \in A$. Expanding $xe$ in terms of our chosen basis, we write $xe = \sum_{i=1}^d \lambda_i x_i e$ for some $\lambda_i \in k$. Then:$$ xea = \sum_{i=1}^d x_ie(\lambda_i a) \in \im \phi. $$Therefore, $MeA = \im \phi = \im f$, completing the proof.
\end{proof} 

\begin{remark}\label{rmk:canonical-sequence}
The projective $A$-module $eA$ induces a torsion pair $(\Fac eA, (eA)^\perp)$ of $\mod A$.
The sequence in Lemma \ref{l:canonical-sequence} is the canonical sequence of $M$ with respect to  $(\Fac eA, (eA)^\perp)$.
\end{remark}

\begin{lemma}\label{l:Ext-proj}
Let $M \in (eA)^\perp$. Then M is $\Ext$-projective in $(eA)^\perp$ if and only if there is a projective $A$-module $P$ such that $M \cong P/PeA$.
\end{lemma}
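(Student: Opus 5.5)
The plan is to identify $(eA)^\perp$ with $\mod A/\langle e\rangle$. A module $M$ lies in $(eA)^\perp$ iff $\Hom_A(eA,M)\cong Me=0$, iff $MeA=0$, iff $M$ is a module over $\bar A:=A/\langle e\rangle$. So $(eA)^\perp$ is the essential image of the full embedding $\mod \bar A\hookrightarrow \mod A$. It is closed under submodules, quotients and extensions: if $0\to L\to M\to N\to 0$ is exact with $Le=0=Ne$, then $Me=0$ by exactness of $\Hom_A(eA,-)$. I read ``$\Ext$-projective'' as $\Ext^1_A(M,(eA)^\perp)=0$. Since the subcategory is extension closed and every extension in $\mod A$ of objects in $(eA)^\perp$ stays in $(eA)^\perp$, $\Ext^1_A(M,N)=\Ext^1_{\bar A}(M,N)$ for $M,N\in(eA)^\perp$. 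Hence $M$ is $\Ext$-projective in $(eA)^\perp$ iff $M$ is a projective $\bar A$-module.

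For the ``if'' direction, let $P$ be projective, say $P\in\add A$. Then $P/PeA\cong P\otimes_A \bar A$ lies in $\add \bar A$, since $A/AeA=\bar A$ and the construction is additive. So $P/PeA$ is a projective $\bar A$-module and therefore $\Ext$-projective. To argue directly instead: for $N\in(eA)^\perp$, apply $\Hom_A(-,N)$ to $0\to PeA\to P\to P/PeA\to 0$. This gives
\[\Hom_A(PeA,N)\to \Ext^1_A(P/PeA,N)\to \Ext^1_A(P,N)=0.\]
Here $\Hom_A(PeA,N)=0$ because $PeA\in\Fac eA$ (Lemma \ref{l:canonical-sequence}) and $N\in(eA)^\perp$. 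Hence $\Ext^1_A(P/PeA,N)=0$.

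For the ``only if'' direction, let $M$ be $\Ext$-projective in $(eA)^\perp$ and let $\pi:P\to M$ be a projective cover in $\mod A$. Since $MeA=0$, $\pi$ kills $PeA$ and factors through $\bar\pi:P/PeA\to M$, which is surjective. Its kernel $K$ is a submodule of $P/PeA$, so $K\in(eA)^\perp$. Therefore the sequence $0\to K\to P/PeA\to M\to 0$ splits by $\Ext$-projectivity, and $M$ is a direct summand of $P/PeA$. The remaining point, which I expect to be the only real obstacle, is to upgrade ``direct summand of $P/PeA$'' to ``of the form $Q/QeA$''. I would use the Krull--Schmidt decomposition. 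Write $P=\bigoplus P_i$ with indecomposable $P_i=e_iA$. Then $P/PeA=\bigoplus P_i/P_ieA$, and each $P_i/P_ieA$ is either $0$ or the indecomposable projective $\bar A$-module $\bar e_i\bar A$, which has simple top. Hence every direct summand of $P/PeA$ is isomorphic to $\bigoplus_{i\in I}P_i/P_ieA=Q/QeA$ with $Q=\bigoplus_{i\in I}P_i$. This completes the plan.
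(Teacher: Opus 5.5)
Your proof is correct and follows essentially the same route as the paper. The paper applies the standard torsion-pair fact (the Ext-projectives in a torsion-free class are the modules $P/tP$, quoted from Assem--Simson--Skowro\'nski) to the pair $(\Fac eA,(eA)^\perp)$, with $tP=PeA$ supplied by Lemma \ref{l:canonical-sequence}, and you have written that argument out in full.

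One small simplification: since $\pi$ is a projective cover, $K=\ker\pi/PeA\subseteq\operatorname{rad}(P/PeA)$. The split summand $K$ is therefore zero by Nakayama, so $M\cong P/PeA$ directly and the Krull--Schmidt step is not needed.
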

\begin{proof}
Note that $(\Fac eA, (eA)^\perp)$ is a torsion pair in $\mod A$. By Lemma \ref{l:canonical-sequence} and \cite[Proposition VI.1.11 (b)]{ASS}, the assertion holds.
\end{proof} 

\begin{lemma}\label{l: duality-properties}
Let $\Du = \Hom_k(-, k)$ be the standard duality. The following properties hold: 
\begin{enumerate}
\item For any $A$-module $X$, we have $\id_A X = \pd_{A^{op}} \Du (X)$ and $\pd_A X = \id_{A^{op}} \Du (X)$. Consequently, $A$ is shod if and only if its opposite algebra $A^{op}$ is shod.
\item $(A / \langle e \rangle)^{op} \cong A^{op} / \langle e \rangle$.
\item For an $A$-module $X$, $X \in (eA)^\perp$ if and only if $\Du(X) \in (A e)^\perp$. In particular, the restriction of $\Du$ induces a duality $\Du: (eA)^\perp \to (A e)^\perp$.
\item For an $A$-module $X \in (eA)^\perp$, $X$ is $\Ext$-projective (resp. $\Ext$-injective) in $(eA)^\perp$ if and only if $\Du(X)$ is $\Ext$-injective (resp. $\Ext$-projective) in $(Ae)^\perp$.
\end{enumerate}
\end{lemma}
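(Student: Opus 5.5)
The four items are of increasing depth, and I would prove them in order, each building on the previous one. Throughout I use the standard duality $\Du=\Hom_k(-,k):\mod A\to\mod A^{op}$, which sends the right module $X$ to the left $A$-module, i.e. right $A^{op}$-module, $\Du(X)$.

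(1) Projective resolutions dualize to injective resolutions, since $\Du$ is an exact contravariant equivalence exchanging projectives and injectives: $\Du(A_A)={}_A\Du(A)$ and $\Du(\Du A)=A$. Hence $\id_A X=\pd_{A^{op}}\Du X$, and the other equality follows symmetrically. The statement about shod algebras is then immediate. Every indecomposable $A^{op}$-module is of the form $\Du X$ with $X\in\ind A$. The dichotomy ``$\pd\le1$ or $\id\le1$'' is swapped by $\Du$, so it holds for $A$ if and only if it holds for $A^{op}$.

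(2) The identity map $A\to A^{op}$ reverses multiplication. It sends the two-sided ideal $AeA$ to the set $AeA$, which is also the two-sided ideal generated by $e$ in $A^{op}$. Passing to quotients gives $(A/\langle e\rangle)^{op}\cong A^{op}/\langle e\rangle$.

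(3) Use $\Hom_A(eA,X)\cong Xe$. Then $X\in(eA)^\perp$ if and only if $Xe=0$. For $\Du X$, viewed as a left $A$-module with $(a\varphi)(x)=\varphi(xa)$, we get $e\Du X=\Du(Xe)$, and $e\Du X\cong\Hom_{A}(Ae,\Du X)$ in the category of left modules. Thus $Xe=0$ if and only if $\Du X\in(Ae)^\perp$. Because $\Du$ is a duality with quasi-inverse $\Du$, it restricts to a duality between these full subcategories.

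(4) $\Ext$-projectivity of $X$ in $(eA)^\perp$ means $\Ext^1_A(X,Y)=0$ for all $Y\in(eA)^\perp$. We have the natural isomorphism $\Ext^1_A(X,Y)\cong\Ext^1_{A^{op}}(\Du Y,\Du X)$, which holds because $\Du$ is an exact duality, so it preserves Yoneda extensions up to reversing arrows. By (3), as $Y$ ranges over $(eA)^\perp$, the module $\Du Y$ ranges over all of $(Ae)^\perp$. So the vanishing condition for $X$ is exactly $\Ext$-injectivity of $\Du X$ in $(Ae)^\perp$. The converse direction, and the statement with projective and injective exchanged, follow by applying $\Du$ again.

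None of these steps is a real obstacle. The only point requiring care is the side conventions in (3): making sure that $e$ acts on $\Du X$ from the correct side, so that the relevant projective is $Ae$ regarded as an $A^{op}$-module.
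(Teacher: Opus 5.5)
Your proposal is correct and follows the paper's route: (1) and (2) are the standard facts the paper calls clear, (3) is a vanishing-of-$\Hom$ argument, and (4) combines (3) with $\Ext^1_A(X,Y)\cong\Ext^1_{A^{op}}(\Du Y,\Du X)$. The one difference is in (3), where you identify both $\Hom$ spaces with $Xe$ and $e\Du X\cong\Du(Xe)$. This is more explicit than the paper's step from $\Hom_{A^{op}}(\Du X,\Du(eA))$ to $\Hom_{A^{op}}(Ae,\Du X)$, which tacitly uses that $\Du(eA)$ is the injective envelope of the simple top of $Ae$.
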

\begin{proof}
(1) and (2) are clear. 

For (3), take an $A$-module $X$.  Then
\begin{align*}
X \in (eA)^\perp &\iff \Hom_A (eA, X) = 0 \\
&\iff \Hom_{A^{op}} (\Du(X), \Du(eA)) = 0 \\
&\iff \Hom_{A^{op}} (A e, \Du(X)) = 0 \\
&\iff \Du(X) \in (A e)^\perp.
\end{align*}

Statement (4) follows from (3) by the duality $\Ext_A^1(X, Y) \cong \Ext_{A^{\text{op}}}^1(\Du(Y), \Du(X)$, which holds for any $X, Y \in \mod A$.
\end{proof} 
The following is well-known.
\begin{lemma}\label{l:cat-of-factor-ring}
Let $A$ be a finite-dimensional $k$-algebra, $I$ an ideal of $A$ and $B = A/I$, then the following properties hold.
\begin{enumerate}
\item The category $\mod B$ is identified with a full subcategory of $\mod A$ consisting of modules annihilated by $I$. In particular, a $B$-module is indecomposable if and only if it is  indecomposable as an $A$-module.
\item $\mod B$ is closed under taking factor modules, submodules, kernels and cokernels in $\mod A$.
\item A morphism $f : X \to Y$ in $\mod B$ is a monomorphism (resp. epimorphism)  if and only if it is a monomorphism (resp. epimorphism) when regarded as a morphism in $\mod A$.
\item A sequence $0 \to X \to Y \to Z \to 0$ of $B$-modules is exact in $\mod B$ if and only if it is exact  in $\mod A$.
\end{enumerate}
\end{lemma}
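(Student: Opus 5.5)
The statement to prove is Lemma \ref{l:cat-of-factor-ring}. The plan is to work throughout with the surjection $\pi: A \to B = A/I$ and restriction of scalars along it. Every statement then reduces to elementary module theory: a $B$-module structure on a $k$-vector space $M$ is the same thing as an $A$-module structure with $MI = 0$.

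For (1), I would send a $B$-module $M$ to the $A$-module given by $m\cdot a := m\cdot \pi(a)$. This module is annihilated by $I$. Conversely, an $A$-module $M$ with $MI=0$ has an action that factors uniquely through $\pi$, giving a $B$-module. Because $\pi$ is surjective, a $k$-linear map between such modules is $A$-linear if and only if it is $B$-linear. So the functor is fully faithful, and its essential image is exactly the modules annihilated by $I$. Finite generation is preserved in both directions, since generators over $B$ generate over $A$ and conversely, and all modules here are finite-dimensional anyway. For indecomposability, note that endomorphism rings agree: $\End_B(M)=\End_A(M)$. Hence the nontrivial idempotents coincide, and so do the direct sum decompositions. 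Alternatively, any $A$-direct summand of $M$ is annihilated by $I$, so it is already a $B$-submodule.

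For (2), let $M$ be a $B$-module, viewed in $\mod A$. If $N\subseteq M$ is an $A$-submodule, then $NI\subseteq MI=0$, so $N$ is annihilated by $I$. If $M\to Q$ is an $A$-epimorphism, then $QI$ is the image of $MI=0$, so $Q$ is annihilated by $I$ as well. Kernels of maps between $B$-modules are submodules, and cokernels are factor modules. The $A$-kernel and $A$-cokernel therefore lie in $\mod B$, and by the full faithfulness from (1) they satisfy the universal properties in $\mod B$ too.

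Items (3) and (4) follow from the fact that monomorphisms, epimorphisms and exactness in module categories are detected on underlying sets, namely injectivity, surjectivity and $\ker=\im$. The underlying sets are identical whether a module is regarded over $A$ or over $B$, and by (2) kernels and images computed in $\mod A$ agree with those computed in $\mod B$. No step is a real obstacle. The only point needing care is the use of surjectivity of $\pi$ in (1), which guarantees that $A$-linearity and $B$-linearity coincide.
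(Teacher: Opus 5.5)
The paper gives no proof of this lemma; it simply records it as well known. Your argument is the standard verification and it is correct and complete.

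The only substantive point is the one you identify. Because $\pi\colon A\to B$ is surjective, $A$-linearity and $B$-linearity coincide, which gives full faithfulness and $\End_B(M)=\End_A(M)$, and hence (1). Part (2) follows from $NI\subseteq MI=0$ and, for an epimorphism $p\colon M\to Q$, $QI=p(MI)=0$. For (3) and (4), note that in any module category monomorphisms and epimorphisms are exactly the injective and surjective maps, since kernels and cokernels exist there. So everything is decided by the same underlying $k$-linear maps.
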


\begin{proposition}\label{p:proj-dim}
Let $M$ be an indecomposable $A$-module belonging to $(eA)^\perp$. If $\pd_A M \leq 1$, then $\pd_{A / \langle e \rangle} M \leq 1$.
\end{proposition}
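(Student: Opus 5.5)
Write $B=A/\langle e\rangle$. Since $M\in(eA)^\perp$ we have $Me\cong\Hom_A(eA,M)=0$, so $M$ is annihilated by $\langle e\rangle$ and is a $B$-module by Lemma \ref{l:cat-of-factor-ring}. I would take a minimal projective resolution over $A$ and push it to $B$ using the canonical sequences of Lemma \ref{l:canonical-sequence}.

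Since $\pd_A M\le 1$, take a projective presentation $0\to P_1\xrightarrow{\alpha} P_0\xrightarrow{\pi} M\to 0$ in $\mod A$, with $\pi$ a projective cover. Apply $-\otimes_A B$, i.e.\ pass to $P/PeA$. This functor is right exact, so we get an exact sequence
\[P_1/P_1eA\xrightarrow{\bar\alpha} P_0/P_0eA\xrightarrow{\bar\pi} M/MeA\to 0,\]
and $M/MeA=M$. The modules $P_i/P_ieA=P_i\otimes_A B$ are projective $B$-modules. It therefore suffices to show that $\bar\alpha$ is injective. Then this is a projective resolution of length $\le 1$ over $B$, and by Lemma \ref{l:cat-of-factor-ring}(4) exactness in $\mod A$ and in $\mod B$ agree.

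To prove injectivity, compare kernels. Since $\pi(P_0eA)\subseteq MeA=0$, we have $P_0eA\subseteq\ker\pi=\alpha(P_1)$. So $\ker\bar\pi=\alpha(P_1)/P_0eA$, and $\bar\alpha$ is injective if and only if $\alpha^{-1}(P_0eA)=P_1eA$. The inclusion $\supseteq$ is clear. For the reverse, first note that $P_0eA\subseteq\alpha(P_1)$ gives $P_0eA=P_0eAeA\subseteq\alpha(P_1)eA=\alpha(P_1eA)$, using that $\alpha$ is $A$-linear. Hence $\alpha(P_1eA)=P_0eA$. Since $\alpha$ is injective, $\alpha^{-1}(P_0eA)=\alpha^{-1}(\alpha(P_1eA))=P_1eA$, as required.

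This argument looks clean, and the only subtle point is the containment $P_0eA\subseteq\alpha(P_1)$, which uses $M\in(eA)^\perp$ essentially. An equivalent way to phrase the conclusion: $\operatorname{Tor}_1^A(M,B)\cong\ker\bar\alpha$ vanishes because $\alpha$ restricts to an isomorphism $P_1eA\to P_0eA$. Indecomposability of $M$ is not really needed for this statement. If one also wants minimality of the resulting $B$-resolution, it follows from $\bar\pi$ being a projective cover, since the radical tops agree; but minimality is not required for the bound $\pd_B M\le 1$.
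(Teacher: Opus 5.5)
Your proof is correct, but it takes a different route from the paper's.

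The paper argues by contraposition. It starts from a $B$-projective presentation $0\to L\to N\to M\to 0$ and writes $N\cong P/PeA$ by Lemma \ref{l:Ext-proj}. It then forms a pullback to obtain $0\to X\to P\to M\to 0$ over $A$. Next it identifies $L\cong X/XeA$ using the uniqueness of the canonical sequence of $X$ for the torsion pair $(\Fac eA,(eA)^\perp)$. Since $L$ is not $\Ext$-projective, $X$ is not projective, and a tacit dimension-shift step (Schanuel) gives $\pd_A M\ge 2$.

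You go the other way. You push an $A$-resolution down along $-\otimes_A B$. You establish the same key identity, that $\ker(P/PeA\to M)=X/XeA$ for $X=\ker(P\to M)$, by the elementwise computation $P_0eA=(P_0eA)eA\subseteq\alpha(P_1)eA$, not by torsion-pair uniqueness.

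What your route gains:
\begin{itemize}
\item It is more self-contained: it needs neither Lemma \ref{l:Ext-proj}, nor canonical-sequence uniqueness, nor Schanuel.
\item It produces an explicit $B$-resolution.
\item It exposes the real reason the statement holds. Because $\langle e\rangle$ is an idempotent ideal, $M\otimes_A\langle e\rangle=0$ and hence $\operatorname{Tor}_1^A(M,A/\langle e\rangle)=0$ for every $A/\langle e\rangle$-module $M$. This confirms your remark that indecomposability is superfluous.
\end{itemize}

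What the paper's route gains is that it stays inside the torsion-pair language it has just set up in Lemmas \ref{l:canonical-sequence} and \ref{l:Ext-proj}.
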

\begin{proof}
Conversely, assume $\pd_{A / \langle e \rangle} M \geq 2$, and we prove $\pd_A M \geq 2$. 
Recall that for any $A$-module in the perpendicular category $(eA)^\perp \cong \mod A / \langle e \rangle$, being $\Ext$-projective is equivalent to being a projective $A / \langle e \rangle$-module.
 Let $0\to L \to N \to M\to 0$ be a short exact sequence in $\mod A / \langle e \rangle$, where $N$ is a projective $\mod A / \langle e \rangle$-module. By Lemma \ref{l:Ext-proj}, $N \cong P/PeA$ for some projective $A$-module $P$. According to Lemma \ref{l:cat-of-factor-ring} (4), this sequence remains exact when regarded as a sequence in $\mod A$. By taking the pull-back of the morphisms $L \to N$ and $P \to N$, we construct the following commutative diagram with exact rows and columns in $\mod A$:
\[
\xymatrix{
PeA\ar@{^(->}[d] \ar@{=}[r] & PeA\ar@{^(->}[d] \\
X\ar@{^(->}[r]\ar@{>>}[d] &P\ar@{>>}[r]\ar@{>>}[d] &M\ar@{=}[d] \\
L\ar@{^(->}[r] &N\ar@{>>}[r] &M. \\
}
\]

Since we assumed $\pd_{A / \langle e \rangle} M \geq 2$, the kernel $L$ can not be $\Ext$-projective in $(eA)^\perp$. Observing the first column of the pull-back diagram, where $PeA \in \Fac eA$ and $L \in (eA)^\perp$, we identify this as the canonical exact sequence for $X$.  Hence, by Lemma \ref{l:canonical-sequence}, $L \cong X/XeA$. By Lemma \ref{l:Ext-proj} again, $X$ is non-projective in $\mod A$ since $L$ is not $\Ext$-projective in $(eA)^\perp$.  Hence $\pd_A M \geq 2$. This provides the required contradiction.
\end{proof} 

\begin{proposition}\label{p:inj-dim}
Let $M$ be an indecomposable $A$-module belonging to $(eA)^\perp$. If $\id_A M \leq 1$, then $\id_{A / \langle e \rangle} M \leq 1$.
\end{proposition}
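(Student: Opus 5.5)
Proposition~\ref{p:inj-dim} is the dual of Proposition~\ref{p:proj-dim}, and the plan is to reduce to it through the standard duality $\Du$, using Lemma~\ref{l: duality-properties}. One subtlety has to be addressed first. The hypothesis is $M\in (eA)^\perp$, which says $Me=0$. Since $e$ is idempotent, this is equivalent to $M\langle e\rangle = MAeA=0$: indeed $MAe\subseteq Me=0$. Hence $M$ is an $A/\langle e\rangle$-module. In the same way, $\Du(M)$ is a left $A$-module, i.e.\ a right $A^{op}$-module, with $e\Du(M)=0$. In the language of $A^{op}$ this means $\Du(M)\in (eA^{op})^\perp$ for the same idempotent $e$ viewed in $A^{op}$; this is the content of Lemma~\ref{l: duality-properties}(3), which identifies $(eA)^\perp$ with $(Ae)^\perp$.

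The steps are as follows. First, $\Du(M)$ is indecomposable, since $\Du$ is a duality. Second, by Lemma~\ref{l: duality-properties}(1), $\pd_{A^{op}}\Du(M)=\id_A M\le 1$. Third, apply Proposition~\ref{p:proj-dim} to the algebra $A^{op}$, the idempotent $e$ and the module $\Du(M)$. This gives $\pd_{A^{op}/\langle e\rangle}\Du(M)\le 1$. Fourth, by Lemma~\ref{l: duality-properties}(2), $A^{op}/\langle e\rangle\cong (A/\langle e\rangle)^{op}$. Finally, apply Lemma~\ref{l: duality-properties}(1) again, now to the algebra $B=A/\langle e\rangle$. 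The $k$-dual of $M$ taken as a $B$-module is the same vector space $\Du(M)$, with the induced $B^{op}$-action. Therefore $\id_B M=\pd_{B^{op}}\Du(M)\le 1$.

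The only delicate point is bookkeeping: we must check that the duality $\Du$ for $\mod B$ is compatible with the one for $\mod A$. That is, $\Du$ restricted to $\mod B\subseteq \mod A$ (Lemma~\ref{l:cat-of-factor-ring}(1)) lands in $\mod B^{op}\subseteq \mod A^{op}$, and it agrees with the standard duality of $B$. This is immediate, because both are $\Hom_k(-,k)$ with the transported action. We must also check that Proposition~\ref{p:proj-dim} applies verbatim to $A^{op}$, which is again a finite-dimensional algebra. If one prefers to avoid the duality, the alternative is to dualize the pull-back argument of Proposition~\ref{p:proj-dim} directly. In that version one uses Ext-injectives of $(eA)^\perp$, namely the injective $B$-modules, realised as the torsion-free parts of injective $A$-modules via the canonical sequence of the torsion pair. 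One then forms a push-out of an injective envelope. The duality route is shorter, and it is the one I would write.
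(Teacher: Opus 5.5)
Your proposal is correct and is essentially the paper's proof. The paper also dualizes with $\Du$: it uses Lemma~\ref{l: duality-properties}(1) and (3) to get $\pd_{A^{op}}\Du(M)\leq 1$ with $\Du(M)\in(Ae)^\perp$, then applies Proposition~\ref{p:proj-dim} over $A^{op}$ and transfers back via $(A/\langle e\rangle)^{op}\cong A^{op}/\langle e\rangle$. Your extra bookkeeping (indecomposability of $\Du(M)$, and that the duality on $\mod A/\langle e\rangle$ is the restriction of the one on $\mod A$) is left implicit in the paper.
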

\begin{proof}
By Lemma \ref{l: duality-properties} (1), $\pd_{A^{op}}\Du(M) = \id_A M \leq 1$. By Lemma \ref{l: duality-properties} (3), $\Du(M) \in (A e)^\perp$. By Lemma \ref{l: duality-properties} (1), (2) and Proposition \ref{p:proj-dim}, $\id_{A / \langle e \rangle} M = \pd_{A^{op} / \langle e \rangle} \Du(M) \leq 1$.
\end{proof} 

\begin{proposition}\label{p:L and R}
Let $M$ be an indecomposable $A$-module belonging to $(eA)^\perp$. 
\begin{enumerate}
\item If $M \in \mathcal{L}_A$, then  $M \in \mathcal{L}_{A / \langle e \rangle}$;
\item If $M \in \mathcal{R}_A$, then  $M \in \mathcal{R}_{A / \langle e \rangle}$.
\end{enumerate}
\end{proposition}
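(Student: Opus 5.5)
We prove (1); statement (2) then follows by duality. Write $B = A/\langle e\rangle$. Suppose $M \in \mathcal{L}_A$ lies in $(eA)^\perp$, which by Lemma \ref{l:cat-of-factor-ring} means $M$ is an indecomposable $B$-module. We must show that for every path in $\ind B$
\[
W = X_0 \xrightarrow{f_1} X_1 \to \cdots \xrightarrow{f_s} X_s = M,
\]
the source satisfies $\pd_B W \leq 1$.

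The key step is to transport this path to $\mod A$. By Lemma \ref{l:cat-of-factor-ring}(1), $\mod B$ is a full subcategory of $\mod A$, and indecomposability is the same in both categories. Each $f_i$ is a nonzero homomorphism between $A$-modules. It is also a non-isomorphism in $\mod A$: by fullness, any $A$-inverse would be a $B$-homomorphism, contradicting that $f_i$ is not an isomorphism in $\mod B$. Hence the sequence is a path in $\ind A$ ending at $M$. Since $M \in \mathcal{L}_A$, we obtain $\pd_A W \leq 1$.

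Next we pass this bound back to $B$. Every $X_i$ is annihilated by $\langle e\rangle$, so each $X_i$, and in particular $W$, lies in $(eA)^\perp$. Here we use the identification $\Hom_A(eA,X) \cong Xe$: a module annihilated by $\langle e\rangle$ has $Xe = 0$. Now $W$ is indecomposable, lies in $(eA)^\perp$, and has $\pd_A W \leq 1$, so Proposition \ref{p:proj-dim} gives $\pd_B W \leq 1$. This covers all paths ending at $M$, including the trivial path with $W = M$. Therefore $M \in \mathcal{L}_B$.

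For (2) the argument is the mirror image. Take a path in $\ind B$ starting at $M$ and ending at $W$. The same reasoning shows it is a path in $\ind A$, so $\id_A W \leq 1$ because $M \in \mathcal{R}_A$. Since $W \in (eA)^\perp$ is indecomposable, Proposition \ref{p:inj-dim} gives $\id_B W \leq 1$. Alternatively, (2) follows from (1) via Lemma \ref{l: duality-properties}.

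The only subtle point is checking that the path conditions transfer unchanged between $\mod B$ and $\mod A$, that is, that non-isomorphisms stay non-isomorphisms and every module on the path lies in $(eA)^\perp$. Both follow from fullness and from $\langle e\rangle$ annihilating $B$-modules, so we expect no serious obstacle.
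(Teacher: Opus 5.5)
Your proof is correct and follows the same route as the paper: regard the path in $\ind A/\langle e\rangle$ as a path in $\ind A$, use $M\in\mathcal{L}_A$ to get $\pd_A W\le 1$, and then apply Proposition \ref{p:proj-dim} (or Proposition \ref{p:inj-dim} for (2)). You also spell out two checks the paper leaves implicit: by fullness, nonzero non-isomorphisms in $\mod A/\langle e\rangle$ stay non-isomorphisms in $\mod A$, and the source $W$ lies in $(eA)^\perp$ because $\Hom_A(eA,W)\cong We=0$.
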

\begin{proof}
We restrict our attention to the proof of (1), as the argument for (2) follows by dual considerations. Assume there is a path from an indecomposable module $X$ to $M$ in $\ind A / \langle e \rangle$. Then this is also a path from $X$ to $M$ in $\ind A$. Since $M \in \mathcal{L}_A$, then $\pd_A X \leq 1$. By Proposition \ref{p:proj-dim}, we have $\pd_{A / \langle e \rangle} X \leq 1$. It follows that $M \in \mathcal{L}_{A / \langle e \rangle}$.
\end{proof}

With these preparations in hand, we now turn to the proof of Theorem \ref{t: quotient-closed}.

\begin{proof}[Proof of Theorem \ref{t: quotient-closed}]
(1) By Proposition \ref{p:L and R}, for an indecomposable $A / \langle e \rangle$-module $M$, if $M \notin \mathcal{L}_{A / \langle e \rangle} \cup \mathcal{R}_{A / \langle e \rangle}$, then $M \notin \mathcal{L}_A \cup \mathcal{R}_A$. Hence if $\mathcal{L}_A \cup \mathcal{R}_A$ is cofinite, so is $\mathcal{L}_{A / \langle e \rangle} \cup \mathcal{R}_{A / \langle e \rangle}$.

The proof of (2) is similar to (1).

(3) By Proposition \ref{p:weakly shod}, assume $l \geq 0$ is an integer satisfying that any path from an indecomposable module not lying in $\mathcal{L}_A$ to an indecomposable module not lying in $\mathcal{R}_A$ has length at most $l$. We consider a path $X \to \cdots \to Y$ in $\ind A / \langle e \rangle$, where $X$, $Y$ are indecomposable and $X \notin \mathcal{L}_{A / \langle e \rangle}$, $Y \notin \mathcal{R}_{A / \langle e \rangle}$. By Proposition \ref{p:L and R}, we have $X \notin \mathcal{L}_A$, $Y \notin \mathcal{R}_A$. Since this path is also a path in $\ind A$, its length $\leq l$. By Proposition \ref{p:weakly shod} again, we know that $A / \langle e \rangle$ is also weakly shod.

(4) Given an indecomposable $A$-module $M$, we have $\pd_A M \leq 1$ or $\id_A M \leq 1$ since $A$ is shod. According to Proposition \ref{p:proj-dim} and Proposition \ref{p:inj-dim}, this assertion holds immediately.
\end{proof}

\section{Examples}  \label{s:examples}
We introduce a family of algebras $A(n, k)$ defined for $n \geq 2$ and $2 \leq k \leq n$. Specifically, $A(n, k)$ is the bound quiver algebra $kQ/I$ given by the following quiver:
\[
\xymatrix@-0.7pc{ 
1  &&2  \ar[ll]_{\alpha_1} &&\cdots \ar[ll]_{\alpha_2} &&k \ar[ll]_{\alpha_{k-1}}  &&\cdots \ar[ll]_{\alpha_{k}} &&n - 1 \ar[ll]_{\alpha_{n - 2}} &&n \ar[ll]_{~\alpha_{n - 1}}
}
\]
with 
\[
\alpha_2 \alpha_1 = 0, ~\alpha_3 \alpha_2 = 0, ~\cdots, ~\alpha_{k-1} \alpha_{k-2} = 0.
\]
Here, we adopt the convention for the composition of arrows as established in \cite{ASS}.
Clearly, $A(n, 2)$ is a hereditary algebra of type $\mathbb{A}_n$, and $A(n, n)$ is the quotient of $A(n, 2)$ modulo the square of its Jacobson radical.

Fix $n \geq 3$ and $2 \leq k < n$. For each vertex $i$ of the quiver, we denote the corresponding simple module by $S(i)$, its projective cover by $P(i)$, and its injective envelope by $I(i)$.
The Auslander-Reiten quiver of $A(n, k)$ has the following shape
\[
\xymatrix@-1.5pc@R=10pt{
&&&&&&&&&&*+[F]{\bsm n\\ \vdots \\ k - 1 \esm} \ar[dr] \\
&&&&&&&&&{\bsm n-1\\ \vdots \\ k - 1 \esm} \ar[ur] \ar[dr]  &&{\bsm n\\ \vdots \\ k \esm} \ar[dr] \\
&&&&&&&&\cdots  \ar[ur]  &&{\bsm n-1\\ \vdots \\ k \esm} \ar[ur]  &&*+[F]{\cdots}  \ar[dr] \\
&*+[F]{\bsm 2\\1 \esm} \ar[dr]   &&&&&&{\bsm k \\ k-1 \esm} \ar[ur] \ar[dr]  &&\cdots   &&\cdots  \ar[dr]  &&*+[F]{\bsm n\\n-1 \esm} \ar[dr] \\
*+[F]{\bsm 1 \esm} \ar[ur]   &&{\bsm 2 \esm}  &\cdots  &{\bsm k-2 \esm} \ar[dr]   &&{\bsm k-1 \esm} \ar[ur]  &&{\bsm k \esm} \ar[ur] &&\cdots  &&{\bsm n-1 \esm} \ar[ur]  &&*+[F]{\bsm n \esm} \\
&&&&&*+[F]{\bsm k-1\\k-2 \esm} \ar[ur]
}
\]
We determine the global dimension of $A(n, k)$ by computing the projective dimensions of its simple modules $S(i)$. First, consider the case for $1 \leq i \leq k$. The simple module $S(1)$ coincides with the projective cover $P(1)$, so $\pd_{A(n,k)} S(1) = 0$. 
 For $1 < i \leq k$, the quadratic relations $\alpha_{j+1}\alpha_j = 0$ induce the following short exact sequences:
 \[0 \to S(i - 1) \to P(i) \to S(i) \to 0.\] By induction on $i$, it follows that $\pd_{A(n,k)} S(i) = i-1$ for $1 \leq i \leq k$. 
 
 For $i > k$, we have a short exact sequence \[0 \to P(i - 1) \to P(i) \to S(i) \to 0.\] It follows that $\pd_{A(n,k)} S(i) = 1$ for all $k < i \leq n$. Hence $\gld A(n, k) = k - 1$. (By the same argument, one can check $\gld A(n, n) = n - 1$.)

We take $T = P(1) \oplus P(2) \oplus \cdots \oplus P(k-1) \oplus P(n) \oplus I(k + 1) \oplus \cdots \oplus I(n)$. Clearly, $|T| = n = |A(n, k)|$ and it is easy to see $\pd_{A(n, k)} T \leq 1$. A straightforward computation shows that
\[
\tau I(k + 1) = {\bsm n-1\\ \vdots \\ k \esm}, ~\tau I(k + 2) = {\bsm n-1\\ \vdots \\ k+1 \esm}, \cdots, ~\tau I(n) = {\bsm n-1 \esm}.
\]
Since there is no path in this Auslander-Reiten quiver from an injective module to one of these modules, we have 
\[ \Hom_{A(n, k)}(P(n) \oplus I(k + 1) \oplus \cdots \oplus I(n), \tau T) = 0.\]
On the other hand, comparing the dimension vectors of $P(1), \cdots, P(k-1)$ and $\tau I(k+1), \cdots, \tau I(n)$, it is easy to see
\[ \Hom_{A(n, k)}(P(1) \oplus P(2) \oplus \cdots \oplus P(k-1), \tau T) = 0.\]
Hence $\Hom_{A(n, k)}(T, \tau T) = 0$, and then $T$ is a tilting $A(n, k)$-module. 

We now investigate the structure of the endomorphism algebra $B = \End_{A(n, k)}(T)$. Write $T = T_1 \oplus \cdots \oplus T_n$, where $T_i = P(i)$ for $1 \leq i \leq k-1$, $T_k = P(n)$, and $T_i = I(i)$ for $k+1 \leq i \leq n$. By analyzing the distribution of these summands within the Auslander-Reiten quiver, we have the following facts:
\begin{itemize}
\item For $1 \leq i < j \leq n$, there are no paths from $T_j$ to $T_i$ and hence $\Hom_{A(n, k)} (T_j, T_i) = 0$. 

\item For $1 \leq i \leq n - 1$, there exists a unique path in the AR quiver from $T_i$ to $T_{i+1}$ which is sectional. The composition of irreducible morphisms along this path yields a non-zero morphism $f_i : T_i \to T_{i+1}$. Accordingly, $\dim_k \Hom_{A(n, k)}(T_i, T_{i+1}) = 1$, with the space spanned by $f_i$.

\item For $1 \leq i \leq k-1$, $i+2 \leq j \leq n$, by comparing the dimension vectors of $T_i$ and $T_j$, we have $\Hom_{A(n, k)} (T_i, T_j) = 0$. 

\item For $k \leq i \leq n-2$, $i+2 \leq j \leq n$, the space $\Hom_{A(n, k)} (T_i, T_j)$ is a 1-dimensional $k$-vector space spanned by $f_{ij} = f_{j-1}f_{j-2}\cdots f_i$.
\end{itemize}

Denote by $e_i$ the primitive idempotent of $B$ corresponding to $T_i$.  Based on the homological observations above, the structure of the Jacobson radical $\opname{rad} B$ is characterized as follows:
\[e_{i}(\opname{rad} B )e_{j} = 0,~ \text{ for } j > i,\] 
\[e_{i+1}(\opname{rad} B )e_{i} / e_{i+1}(\opname{rad} B )^2 e_{i} \cong  \opname{span}\{f_i\},~ \text{ for } 1\leq i \leq n-1,\] 
\[e_{j}(\opname{rad} B )e_{i} / e_{j}(\opname{rad} B )^2 e_{i} = 0,~ \text{ for } 1 \leq i < j \leq n, j - i \geq 2. \] 
In addition, $f_2 f_1 = 0$, $f_3 f_2 = 0$, $\cdots$, $f_k f_{k-1} = 0$. 
Consequently, the endomorphism algebra $B = \operatorname{End}_{A(n, k)}(T)$ is defined by the following bound quiver:
\[
\xymatrix@-0.7pc{ 
1  &&2  \ar[ll]_{\alpha_1} &&\cdots \ar[ll]_{\alpha_2} &&k \ar[ll]_{\alpha_{k-1}}  &&\cdots \ar[ll]_{\alpha_{k}} &&n - 1 \ar[ll]_{\alpha_{n - 2}} &&n \ar[ll]_{~\alpha_{n - 1}}
}
\]
with
\[
\alpha_2 \alpha_1 = 0, ~\alpha_3 \alpha_2 = 0, ~\cdots, ~\alpha_{k} \alpha_{k-1} = 0.
\]
This implies $B \cong A(n, k+1)$. We have thus shown the following result.

\begin{proposition}\label{p: iterative tilt}
For $n \geq 2$ and $2 \leq k \leq n$, we have $\gld A(n, k) = k - 1$. If $k < n$, there is a tilting $A(n, k)$-module $T$ such that $\End_{A(n, k)} (T) \cong A(n, k + 1)$. In particular, iteratively taking the endomorphism algebras of tilting modules, starting from a hereditary algebra of type $\mathbb{A}_n$, yields an algebra of global dimension $n - 1$ after $n - 2$ steps. Moreover, $A(n, k)$ is isomorphic to the endomorphism algebra of a tilting complex over a hereditary algebra of type $\mathbb{A}_n$.
\end{proposition}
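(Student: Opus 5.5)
Most of Proposition \ref{p: iterative tilt} has already been computed in the discussion preceding it, so the plan is to organize those computations and add the final derived-category statement. For the global dimension, I will use the short exact sequences $0\to S(i-1)\to P(i)\to S(i)\to 0$ for $2\le i\le k$, where $\operatorname{rad}P(i)=S(i-1)$ because $\alpha_{i}\alpha_{i-1}=0$. By induction on $i$ they give $\pd S(i)=i-1$: the induction hypothesis $\pd S(i-1)=i-2$ gives $\pd S(i)\le i-1$. Equality holds because $S(i-1)$ is not projective and the sequence is a minimal projective presentation step, so $\Omega S(i)=S(i-1)$. For $i>k$, $\operatorname{rad}P(i)\cong P(i-1)$ since no relation starts there, so $\pd S(i)=1$. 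The global dimension is the maximum over simples, namely $k-1$. The case $k=n$ is identical, and $k=2$ gives the hereditary case.

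For the tilting module with $k<n$, I take $T=P(1)\oplus\cdots\oplus P(k-1)\oplus P(n)\oplus I(k+1)\oplus\cdots\oplus I(n)$. The steps are as follows. First, $\pd T\le1$, since the only non-projective summands $I(j)$ for $j>k$ are uniserial with top $S(n)$ and socle $S(j)$; they are quotients of $P(n)$ by $P(j-1)$ because the path $n\to j-1$ avoids the relations. Second, $\Ext^1(T,T)\cong D\Hom(T,\tau T)=0$, which I check using the AR quiver displayed above: the translates $\tau I(j)$ are the uniserials with top $n-1$ and socle $j-1$, and they receive no maps from $P(n)$ or the $I(j)$, nor from $P(i)$ with $i<k$, by comparing supports. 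Third, $|T|=n$. These three facts make $T$ tilting. Then I compute $B=\End(T)$ with the ordering $T_1,\dots,T_n$. Hom spaces vanish backwards. The irreducible maps in $B$ are the sectional composites $f_i\colon T_i\to T_{i+1}$. For $i<k$, $f_{i+1}f_i=0$ because the image of $P(i)\to P(i+1)$ is the socle and lies in the kernel of the next map. For $i\ge k$, the composites $f_{ij}$ are nonzero and span the one-dimensional Hom spaces. Identifying the ordering of vertices and the convention for composition then gives $B\cong A(n,k+1)$, whose relations are $\alpha_{j+1}\alpha_j=0$ for $j\le k-1$.

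Iterating this from $A(n,2)=kQ$ of type $\mathbb{A}_n$ gives $A(n,n)$, of global dimension $n-1$, after $n-2$ steps. For the last claim, Happel's theorem says that a tilting module $T$ gives a derived equivalence $\der^b(\mod A(n,k))\simeq\der^b(\mod A(n,k+1))$ with $A(n,k+1)\cong\End(T)$. Composing these equivalences gives $\der^b(\mod A(n,2))\simeq\der^b(\mod A(n,k))$. Under this composite, $A(n,k)$ corresponds to a tilting complex $X$ over $A(n,2)$, and $\End(X)\cong A(n,k)$.

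The main obstacle is the endomorphism algebra identification, specifically the claim that the composite $f_kf_{k-1}\colon P(k-1)\to P(n)\to I(k+1)$ vanishes. This is what produces the new relation $\alpha_k\alpha_{k-1}=0$, while $f_{k+1}f_k$ must stay nonzero. I will check these directly on dimension vectors: the image of $P(k-1)$ in $P(n)$ is the bottom part with support $\{k-1,k-2\}$, and this lies in $P(k)$, which is the kernel of $P(n)\to I(k+1)$.
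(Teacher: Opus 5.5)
Your proposal is correct and follows essentially the same route as the paper: the same resolutions of the simples, the same tilting module $T$ verified through $\pd T\le 1$, $\Hom(T,\tau T)=0$ and $|T|=n$, the same identification of $\End(T)$ through the sectional maps $f_i$, and the same composition of derived equivalences for the last claim. One small slip in your final paragraph: $P(n)$ has support $\{k-1,\ldots,n\}$, so the image of $P(k-1)\to P(n)$ is the simple socle $S(k-1)$, not a submodule with support $\{k-1,k-2\}$. That socle still lies in $P(k)=\ker(P(n)\to I(k+1))$, so $f_kf_{k-1}=0$ holds as you need.
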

\begin{proof}
It suffices to note that $\gld A(n, n) = n - 1$ and $A(n, 2)$ is a hereditary algebra of type $\mathbb{A}_n$. For the last assertion, it suffices to observe that $A(n, k)$ is derived equivalent to $A(n,2)$, and the preimage of $T$ in $\der^b(A(n, 2))$ is a tilting complex.
\end{proof}

Let $X_0 \xra{f_1} X_1 \xra{f_2} X_2 \to \cdots \xra{f_s} X_s$ be a path of irreducible morphisms in $\ind A$. If $\tau X_{i+1} = X_{i - 1}$ for some $1 \leq i \leq s - 1$, then $X_i$ is called a \emph{hook} in this path (see \cite{CL99}).

\begin{proposition}\label{p: classification of A(n, k)}
For $n \geq 2$ and $2 \leq k \leq n$, the following statements hold:
\begin{enumerate}
\item $A(n, 2)$ is a hereditary algebra.
\item $A(n, 3)$ is a quasi-tilted algebra.
\item $A(n, 4)$ is a strictly shod algebra.
\item $A(n, k)$ is not shod for $k \geq 5$.
\end{enumerate}
\end{proposition}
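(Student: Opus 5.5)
We go through the four statements one at a time, using the facts about $A(n,k)$ established above: $\gld A(n,k)=k-1$, and its Auslander--Reiten quiver has the displayed shape. Statement (1) is immediate, since $A(n,2)=kQ$ with no relations. For (2), we use Proposition~\ref{p: iterative tilt}: $A(n,3)\cong\End_{A(n,2)}(T)$ for a tilting module $T$ over the hereditary algebra $A(n,2)$. Hence $A(n,3)$ is tilted, and in particular quasi-tilted (a tilting module over a hereditary algebra is a tilting object in $\mathcal{H}=\mod A(n,2)$).

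For (3), we first show that $A(n,4)$ is shod. We use the characterization: $A$ is shod if and only if there is no path $X\to\cdots\to Y$ in $\ind A$ with $\id X\ge2$ and $\pd Y\ge 2$; equivalently, $\ind A=\mathcal{L}_A\cup\mathcal{R}_A$.

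\begin{itemize}
\item From the exact sequences $0\to S(i-1)\to P(i)\to S(i)\to 0$, the only indecomposables of projective dimension $\ge2$ are the simples $S(3)$ and the modules in the tail of the quiver that involve $S(3)$ as a top without $S(4)$. Concretely, $\pd M\ge 2$ occurs only for $S(3)$, since $P(3)=\bsm 3\\2\esm$ and $\pd S(3)=2$.
\item Dually, $\id M\ge2$ occurs only for $S(2)$, since $I(2)=\bsm 3\\2\esm$ and the cokernel $S(3)$ is not injective.
\item In the Auslander--Reiten quiver, the region $1,2,3,4$ with zero relations consists of simples and the length-two modules forming the "zigzag" bottom part.
\end{itemize}

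Using the displayed shape, we check that every path starting at $S(2)$ goes $S(2)\to\bsm3\\2\esm\to S(3)$; this is the key point to verify. So there is a path from a module of injective dimension $2$ to one of projective dimension $2$, and the naive criterion would seem to fail. Hence the criterion must be applied carefully: shod requires only that each indecomposable has $\pd\le1$ or $\id\le1$. We check directly that $S(2)$ has $\pd=1$ and $S(3)$ has $\id=1$ (since $0\to S(3)\to I(3)\to S(4)\to\cdots$, where $I(3)$ is uniserial with socle $3$ and top $4$ given the arrow $\alpha_3$ is not in a relation for $k=4$). For every other indecomposable $M$, we check that $\pd M\le 1$ (all modules with top outside $\{2,3\}$ have projective cover with kernel projective) or $\id M\le1$. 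Since $\gld A(n,4)=3$, the algebra is strictly shod.

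For (4), with $k\ge5$, we exhibit the simple $S(3)$. It has $\pd S(3)=2$ from the resolution $0\to P(1)\to P(2)\to P(3)\to S(3)\to 0$. Dually, $\id S(3)=2$: $I(3)=\bsm4\\3\esm$, the cokernel is $S(4)$, and $I(4)=\bsm5\\4\esm$ because $\alpha_4\alpha_3=0$ requires $k\ge5$, which gives $\id S(3)\ge 2$. Thus $S(3)$ violates the shod condition. The main obstacle is the bookkeeping in (3): we must confirm, using the Auslander--Reiten quiver, that the only candidates with both dimensions $\ge2$ would be $S(2)$ and $S(3)$, and then rule them out. We will do this by computing the minimal projective and injective resolutions of the simples $S(i)$ for $i\le 4$ and of the length-two modules in the zigzag. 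All remaining indecomposables are uniserial with top at least $4$ or socle at least $3$, and for these one of the two dimensions is at most $1$ by the sequences $0\to P(i-1)\to P(i)\to S(i)\to0$ for $i>k$.
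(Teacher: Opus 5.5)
Your part (3) has a genuine gap: your list of indecomposables with large homological dimension is wrong, and the final verification depends on it. In $A(n,4)$ one has $P(j)=\bsm j\\ \vdots\\ 3\esm$ for every $j\ge 4$. So for each uniserial module $M=\bsm j\\ \vdots\\ 4\esm$ with socle $S(4)$, including $S(4)$ itself, the kernel of $P(j)\to M$ is $S(3)$, and hence $\pd M=3$. Two of your claims are therefore false: that $\pd M\ge 2$ happens only for $S(3)$, and that modules with top outside $\{2,3\}$ have projective syzygy. They also contradict the equality $\gld A(n,4)=3$ that you use for strictness, which the paper obtains precisely from $\pd S(4)=3$. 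If $S(3)$ were the only module with $\pd\ge 2$, the global dimension would be $2$ and $A(n,4)$ would be quasi-tilted. Dually, you overlook $S(1)$, which has $\id S(1)=3$. Moreover, $I(3)=\bsm n\\ \vdots\\ 3\esm=P(n)$, which is not $\bsm 4\\3\esm$ unless $n=4$, and $\alpha_3\alpha_2=0$ is one of the relations of $A(n,4)$. The correct reason for $\id S(3)=1$ is $I(3)/S(3)\cong I(4)$. Finally, your opening criterion points the wrong way. The condition $\ind A=\mathcal{L}_A\cup\mathcal{R}_A$ says there is no path from a module with $\pd\ge 2$ to one with $\id\ge 2$. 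So the path $S(2)\to\bsm 3\\2\esm\to S(3)$ is no obstruction, and there was nothing to ``apply carefully''.

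The repair is short. The indecomposables with $\pd\ge 2$ are exactly $S(3)$ and the modules $\bsm j\\ \vdots\\ 4\esm$ with $4\le j\le n$. All of these have $\id\le 1$: $S(3)$ by the argument above, and each $\bsm j\\ \vdots\\ 4\esm$ because it is either injective or embeds in $I(4)=\bsm n\\ \vdots\\ 4\esm$ with cokernel $I(j+1)$. The indecomposables with $\id\ge 2$ are exactly $S(1)$ and $S(2)$, which have $\pd\le 1$. These two sets are disjoint, so $A(n,4)$ is shod, and $\gld A(n,4)=3$ makes it strictly shod. The paper instead reads this off the Auslander--Reiten quiver via the Coelho--Lanzilotta hook criterion: the only paths from injectives to projectives are $I(1)\rightsquigarrow P(n)$, with one hook, and $I(2)\rightsquigarrow P(n)$, which is sectional.

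Parts (1), (2) and (4) are fine, and (2) and (4) take different routes from the paper. In (2) you use Proposition~\ref{p: iterative tilt} to see that $A(n,3)$ is tilted from the hereditary algebra $A(n,2)$, which avoids the Happel--Reiten--Smal\o{} path criterion. In (4) you exhibit $S(3)$ with $\pd S(3)=2$ and $\id S(3)\ge 2$ instead of citing $\gld\le 3$ for shod algebras, which is more self-contained. There is one minor slip in (4): $I(4)=\bsm 5\\4\esm$ holds only when $k\ge 6$ or $n=5$, the relevant relation being $\alpha_5\alpha_4=0$, and $\id S(3)$ need not equal $2$. This does not matter, since you only need $S(4)$ to be non-injective, which holds because $n\ge 5$.
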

\begin{proof}
(1) is clear. For (4), we note that $\gld A(n, k) = k - 1 \geq 4$ for $k \geq 5$. Hence $A(n, k)$ is not shod since the global dimension of a shod algebra is not larger than 3.

Let $k = 3$. Observe that, in the Auslander-Reiten quiver of $A(n, 3)$, there is a sectional path from $I(1)$ to $P(n)$, and this is the only path from an injective $A(n, 3)$-module to a projective $A(n, 3)$-module. By \cite[Theorem II.1.14]{HRS}, $A(n, 3)$ is a quasi-tilted algebra.

Let $k = 4$. There is a path from $I(1)$ to $P(n)$ and a path from $I(2)$ to $P(n)$. These are the only paths from an injective $A(n, 4)$-module to a projective $A(n, 4)$-module. The first has one hook and the second is sectional. Hence $A(n, 4)$ is a strictly shod algebra by \cite[Theorem 2.1 and Proposition 2.4]{CL99}.
\end{proof}

\begin{example}  \label{e: tau-tilted alg from quasitilted is not quasitilted}
Let $A$ be the $k$-algebra given by:
\[
\xymatrix@-0.7pc{ 
1  &2  \ar[l]_{\alpha} &3 \ar[l]_{\beta} &4 \ar[l]_{\gamma}
}
\]
with $\beta \alpha = 0$. By Proposition \ref{p: classification of A(n, k)} (2), $A \cong A(4, 3)$ is quasi-tilted. Its Auslander-Reiten quiver exhibits the following structure:
\[
\xymatrix@-1.0pc@R=10pt{
&&&&*+[F]{\bsm 4\\3\\2 \esm} \ar[dr] \\
&&&{\bsm 3\\2 \esm} \ar[ur] \ar[dr]  &&{\bsm 4\\3 \esm} \ar[dr] \ar@{-->}[ll] \\
*+[F]{\bsm 1 \esm} \ar[dr]   &&{\bsm 2 \esm} \ar[ur] \ar@{-->}[ll]  &&{\bsm 3 \esm} \ar[ur] \ar@{-->}[ll]  &&*+[F]{\bsm 4 \esm} \ar@{-->}[ll] \\
&*+[F]{\bsm 2\\1 \esm} \ar[ur]
}
\]
\\
We take $T = P(1) \oplus P(2) \oplus P(4) \oplus I(4)$, which is the Bongartz completion of $I(4)$. The endomorphism algebra $B = \End_A(T)$ is given by:
\[
\xymatrix@-0.7pc{ 
1  &2  \ar[l]_{\alpha} &3 \ar[l]_{\beta} &4 \ar[l]_{\gamma}
}
\]
with $\beta \alpha = 0$, $\gamma \beta = 0$. By Proposition \ref{p: classification of A(n, k)} (3), $B \cong A(4, 4)$ is strictly shod. In particular, it is not quasi-tilted again.
\end{example}

\begin{example}  \label{e: tau-tilted alg from shod is not shod}
Let $A$ be the $k$-algebra given by:
\[
\xymatrix@-0.7pc{ 
1  &2  \ar[l]_{\alpha} &3 \ar[l]_{\beta} &4 \ar[l]_{\gamma} &5 \ar[l]_{\delta}
}
\]
with $\beta \alpha = 0$, $\gamma \beta = 0$. By Proposition \ref{p: classification of A(n, k)} (3), $A \cong A(5, 4)$ is strictly shod. Its Auslander-Reiten quiver is
\[
\xymatrix@-1.0pc@R=10pt{
&&&&&&*+[F]{\bsm 5\\4\\3 \esm} \ar[dr] \\
&*+[F]{\bsm 2\\1 \esm}  \ar[dr]   &&&&{\bsm 4\\3 \esm} \ar[ur] \ar[dr] &&{\bsm 5\\4 \esm} \ar[dr] \ar@{-->}[ll] \\
*+[F]{\bsm 1 \esm} \ar[ur]  &&{\bsm 2 \esm} \ar[dr] \ar@{-->}[ll] &&{\bsm 3 \esm} \ar[ur] \ar@{-->}[ll] &&{\bsm 4 \esm}  \ar[ur] \ar@{-->}[ll] && *+[F]{\bsm 5 \esm} \ar@{-->}[ll]\\
&&&*+[F]{\bsm 3\\2 \esm}  \ar[ur]
}
\]
\\
We take $T = P(1) \oplus P(2) \oplus P(3) \oplus P(5) \oplus I(5)$, which is the Bongartz completion of $I(5)$. The endomorphism algebra $B = \End_A(T)$ is given by:
\[
\xymatrix@-0.7pc{ 
1  &2  \ar[l]_{\alpha} &3 \ar[l]_{\beta} &4 \ar[l]_{\gamma} &5 \ar[l]_{\delta}
}
\]
with $\beta \alpha = 0$, $\gamma \beta = 0$, $\delta \gamma = 0$. It is easy to check $\pd_B S(3) = \id_B S(3) = 2$, and hence $B = \End_A(T)$ is not shod.
\end{example}


\def\cprime{$'$} \def\cprime{$'$}
\providecommand{\bysame}{\leavevmode\hbox to3em{\hrulefill}\thinspace}
\providecommand{\MR}{\relax\ifhmode\unskip\space\fi MR }
\providecommand{\MRhref}[2]{%
  \href{http://www.ams.org/mathscinet-getitem?mr=#1}{#2}
}

\end{document}